\let\uml\"
\author{Jeffrey S. Meyer}
\address{Department of Mathematics\\ 
California State University\\ 
San Bernardino, CA 92407}
\email{jeffrey.meyer@csusb.edu}
\author{Christian Millichap}
\address{Department of Mathematics\\ 
Furman  University\\ 
Greenville, SC 29613}
\email{Christian.Millichap@furman.edu}
\author{Rolland Trapp}
\address{Department of Mathematics\\ 
California State University\\ 
San Bernardino, CA 92407}
\email{rtrapp@csusb.edu}
\title[Arith. and Hidden Symmetries of Pretzel FAL Complements]{Arithmeticity and hidden symmetries\\ of fully augmented\\ pretzel link complements}
\keywords{hyperbolic link complement, arithmetic link, hidden symmetry.}
\subjclass[2010]{Primary: 57M25; Secondary: 57M27, 57M50.}
\DeclareMathAlphabet{\curly}{U}{rsfs}{m}{n}
\DeclareMathOperator{\N}{N}
\DeclareMathOperator{\PSL}{PSL}
\DeclareMathOperator{\tr}{tr}
\newtheorem{thm}{Theorem}[section]
\newtheorem{cor}[thm]{Corollary}
\newtheorem*{thmA}{Theorem \ref{mainthm1}}
\newtheorem{prop}[thm]{Proposition}
\newtheorem{lem}[thm]{Lemma}
\newtheorem{ques}[thm]{Question}
\theoremstyle{definition}
\newtheorem{rem}[thm]{Remark}
\newtheorem*{rmk}{Remark}
\theoremstyle{remark}
\def\1{\mathbf{1}}
\theoremstyle{plain}
\theoremstyle{remark}
\newtheorem{defn}[thm]{Definition}
\theoremstyle{plain}
\newtheorem{lemma}[thm]{Lemma}
\newcommand{\C}{\mathbb{C}}
\newcommand{\Q}{\mathbb{Q}}
\def\moverlay{\mathpalette\mov@rlay}
\def\mov@rlay#1#2{\leavevmode\vtop{%
   \baselineskip\z@skip \lineskiplimit-\maxdimen
   \ialign{\hfil$\m@th#1##$\hfil\cr#2\crcr}}}
\newcommand{\charfusion}[3][\mathord]{
    #1{\ifx#1\mathop\vphantom{#2}\fi
        \mathpalette\mov@rlay{#2\cr#3}
      }
    \ifx#1\mathop\expandafter\displaylimits\fi}
\let\@@pmod\pmod
\DeclareRobustCommand{\pmod}{\@ifstar\@pmods\@@pmod}
\def\@pmods#1{\mkern4mu({\operator@font mod}\mkern 6mu#1)}
\begin{document}

\begin{abstract}
This paper examines number theoretic and topological properties of fully augmented pretzel link complements. In particular, we determine exactly when these link complements are arithmetic and exactly which are commensurable with one another. We show these link complements realize infinitely many CM-fields as invariant trace fields, which we explicitly compute. Further, we construct two infinite families of non-arithmetic fully augmented link complements: one that has no hidden symmetries and the other where the number of hidden symmetries grows linearly with volume. This second family realizes the maximal growth rate for the number of hidden symmetries relative to volume for non-arithmetic hyperbolic 3-manifolds. Our work requires a careful analysis of the geometry of these link complements, including their cusp shapes and totally geodesic surfaces inside of these manifolds.


\end{abstract}

\maketitle 
\setcounter{tocdepth}{1}
\tableofcontents


\section{Introduction}

Every link $L \subset \mathbb{S}^{3}$ determines a link complement, that is, a non-compact $3$-manifold $M = \mathbb{S}^{3} \setminus L$.  If $M$ admits a metric of constant curvature $-1$, we say that both $M$ and $L$ are hyperbolic, and in fact, if such a hyperbolic structure exists, then it is unique (up to isometry) by Mostow--Prasad rigidity. Ever since the seminal work of Thurston in the early 1980s \cite{Thurston2}, we have known that links are frequently hyperbolic and a significant amount of research has been dedicated to understanding how their geometries relate to topological, combinatorial, and number theoretic properties of these links. In this paper, we further investigate these relationships for a  particularly tractable class of links known as \textit{fully augmented links} (FALs).  These links often admit hyperbolic structures that can be explicitly described in terms of combinatorial information coming from their respective link diagrams. See Figure \ref{fig:intropic} for two diagrams of FALs.

There has already been significant progress made in developing relationships between a FAL diagram and the geometry of the corresponding link complement. Geometric structures for these link complements that can be constructed via diagrams were first described by Agol and Thurston in the appendix of \cite{L2004} in 2004. Futer--Purcell  used this construction to determine diagrammatic conditions that guarantee a FAL is hyperbolic and computed explicit estimates on the geometry of the cusp shapes of FALs in \cite{FP2007}. One nice feature of hyperbolic FALs is that any hyperbolic link $L \subset \mathbb{S}^{3}$ can be constructed via Dehn surgery on a hyperbolic FAL. Futer--Purcell \cite{FP2007} exploited this relationship along with their cusp geometry estimates of FALs to show that highly twisted links coming from Dehn surgeries of FALs admit no exceptional surgeries.  Purcell also exploited these geometric structures on FALs in \cite{P2007} to determine explicit bounds on volumes and cusp volumes of hyperbolic FALs in terms of diagrammatic information. Since the mid 2000s, FALs and their generalizations have further been explored by Purcell \cite{P2008} \& \cite{P2010},  Flint \cite{F2017}, and Harnois--Olson--Trapp \cite{HOT2018}. 

Here, we address several open questions about number theoretic properties and commensurability classes of FALs in the context of \textit{fully augmented pretzel links} (pretzel FALs).  These links are an infinite subclass of FALs whose geometric decompositions admit additional properties that can be exploited. They are constructed by fully augmenting any pretzel link, that is, enclosing each twist region of a pretzel link with an unkotted circle, which we call a crossing circle, and removing all full-twists from each twist region. See Figure \ref{fig:intropic} for a diagram of a pretzel link $K$ and the corresponding pretzel FAL $\mathcal{F}$. We let $\mathcal{P}_{n}$ denote a pretzel FAL with $n$ crossing circles and with no twists going through any of the crossing circles. A diagram of $\mathcal{P}_{3}$ is depicted on the right side of Figure \ref{fig:intropic}. We set $M_n = \mathbb{S}^{3} \setminus \mathcal{P}_{n}$. We refer to a pair of pretzel FALs as \textit{half-twist partners} if they have the same number of crossing circles and differ  by some number of half-twists going through these crossing circles; we also refer to the corresponding link complements as half-twist partners. For instance, $\mathcal{F}$ and $\mathcal{P}_{3}$ in Figure \ref{fig:intropic} are half-twist partners. Half-twist partners frequently exhibit a number of common features, as we shall see in this paper.  Moving forward, we will assume that $n \geq 3$, which guarantees that  $M_n$ and all of its half-twist partners are hyperbolic.  We refer the reader to Section \ref{sec:geodecomp} for a more thorough description of pretzel FALs, their geometric decompositions, and their half-twist partners. 

\begin{figure}[ht]
	\centering
	\begin{overpic}[width = \textwidth]{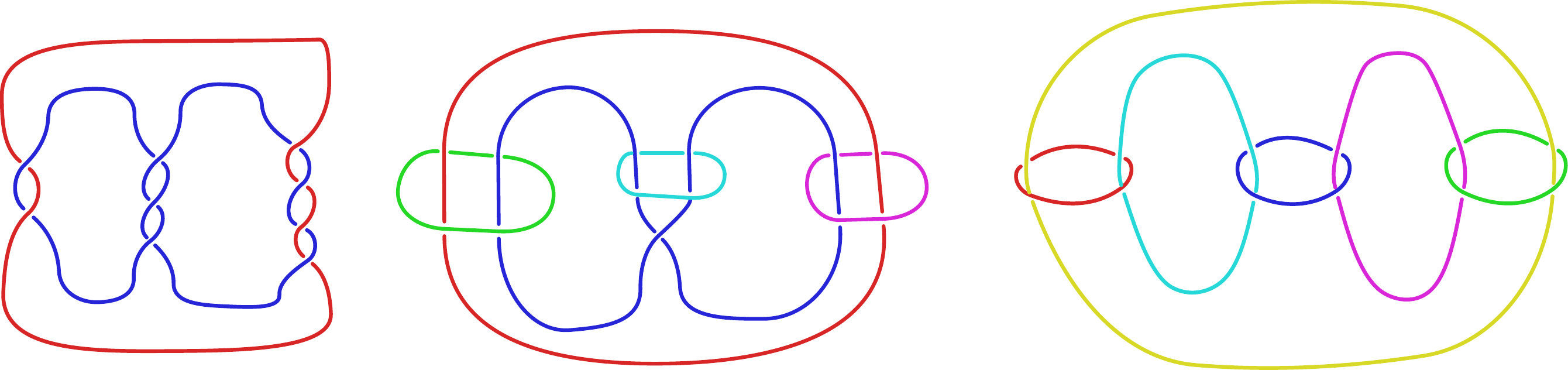}
		\put(2,22){$K$}
		\put(30,22){$\mathcal{F}$}
		\put(66,23){$\mathcal{P}_{3}$}
		\put(67,12){$c_{1}$}
		\put(81.5,12){$c_{2}$}
		\put(95,12.5){$c_{3}$}		
	\end{overpic}
	\caption{On the left is a diagram of a pretzel link $K$ with three twist regions. The middle diagram shows the pretzel FAL $\mathcal{F}$ obtained from fully augmenting $K$. The right diagram shows the pretzel FAL $\mathcal{P}_{3}$, which is a half-twist partner to $\mathcal{F}$.   Crossing circles of $\mathcal{P}_{3}$ are labeled by $c_i$, for $i=1,2,3$.}
	\label{fig:intropic}
\end{figure}

Our first major result is a complete classification of which pretzel FAL complements are arithmetic.
A link complement is \textit{arithmetic} if its fundamental group is commensurable to $\PSL_2(\mathcal{O}_k)$, where $k$ is some imaginary quadratic extension of $\Q$ and $\mathcal{O}_k$ is its ring of integers.
For more on arithmetic $3$-manifolds, we refer the reader to \cite{MR}.  
Arithmetic link complements have very restrictive topological and geometric properties, and in particular, they can not contain closed geodesics that are very short; see Theorem \ref{thm:NeRe} for an exact description. We show that most pretzel FAL complements (along with their half-twist partners and an even more general type of partner - see Definition \ref{defn:GeomPart}) are non-arithmetic by using their geometric decompositions to find short geodesics. The remaining cases are dealt with by examining properties of the invariant trace fields  for these link complements.  The following theorem shows that a pretzel FAL complement is non-arithmetic exactly when it has at least $5$ crossing circles in its respective diagram. See Section \ref{sec:Arith} for more details.

\begin{thm}
	\label{mainthm1}
	$M_{n}$ and all of its half-twist partners are arithmetic if and only if $n = 3,4$. 
\end{thm}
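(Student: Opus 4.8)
The plan is to split the proof into two halves: showing $M_n$ and its half-twist partners are arithmetic when $n=3,4$, and showing they are non-arithmetic when $n \geq 5$.

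For the non-arithmetic direction, I would use the geometric decomposition of pretzel FAL complements into two ideal polyhedra (the Agol--Thurston construction) to locate a closed geodesic whose length is forced to be very short once $n$ is large. The key tool is the characterization of arithmetic link complements in terms of short geodesics (the referenced Theorem \ref{thm:NeRe}, in the spirit of Neumann--Reid): an arithmetic hyperbolic $3$-manifold cannot contain a closed geodesic shorter than some universal bound. Concretely, I expect the relevant short geodesic to come from a totally geodesic surface in $M_n$ — the reflection surface associated to the crossing circles — whose geometry degenerates as $n$ grows; as the number of crossing circles increases, the corresponding $2$-bridge or pretzel sub-structure produces arbitrarily short orthogeodesics or a closed geodesic of length shrinking to $0$. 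Since half-twist partners share this totally geodesic surface (half-twists act trivially on it up to isometry), the same short geodesic persists across all half-twist partners, so the argument handles them uniformly. The main obstacle here is making the length estimate explicit and uniform over all half-twist partners — one must verify that the half-twist operation really does not lengthen the relevant geodesic, which requires a careful analysis of how half-twists interact with the polyhedral decomposition and the associated cusp cross-sections.

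For the arithmetic direction ($n = 3, 4$), I would compute the invariant trace field and invariant quaternion algebra directly. For $n=3$, $M_3$ should be recognizable: $\mathcal{P}_3$ is the "magic manifold"-adjacent or minimally-twisted chain-link-type complement, and one expects its invariant trace field to be $\Q(\sqrt{-1})$ or $\Q(\sqrt{-3})$ with the quaternion algebra unramified everywhere, forcing arithmeticity; similarly for $n=4$. More robustly, one can invoke the fact that the relevant polyhedra for small $n$ decompose into copies of standard arithmetic pieces (e.g., regular ideal octahedra or tetrahedra), which tile $\mathbb{H}^3$ in a way commensurable with $\PSL_2(\mathcal{O}_k)$. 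For the half-twist partners when $n=3,4$, one checks that performing half-twists yields a manifold still built from the same arithmetic pieces — or that it is isometric to one already on the (finite) list — so arithmeticity is preserved. The main obstacle here is confirming that \emph{every} half-twist partner for $n=3,4$ stays arithmetic rather than just the untwisted $M_n$; this again reduces to understanding the finitely many combinatorial possibilities and checking each produces an invariant trace field that is imaginary quadratic together with a quaternion algebra satisfying the arithmeticity criterion (ramified at no finite place and at no real place, which is automatic in the Kleinian setting, so really just: the trace field is imaginary quadratic and traces are algebraic integers).

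Overall I expect the harder half to be the non-arithmetic direction, specifically producing a single short geodesic with an effective length bound that degrades to zero (or below the arithmeticity threshold) as $n \to \infty$ and that is manifestly insensitive to half-twists. Once the bound is in hand, Theorem \ref{thm:NeRe} immediately rules out arithmeticity for $n \geq 5$, and the arithmetic cases $n = 3, 4$ follow from explicit (finite) computation of invariant trace fields and quaternion algebras.
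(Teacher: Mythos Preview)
Your overall strategy matches the paper's in spirit (short geodesics for large $n$, explicit computation for small $n$), but there is a genuine gap at $n=5$ and especially $n=6$.

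The short geodesic you are hoping for has length $\ell(\gamma)=2\ln\!\bigl(\tfrac{\csc(\pi/n)+1}{\csc(\pi/n)-1}\bigr)$ in the paper's construction, and this only drops below the Neumann--Reid threshold $1.9248\ldots$ once $n\ge 7$; for $n=5$ one gets $\ell\approx 2.70$ and for $n=6$ one gets $\ell=2\ln 3\approx 2.20$. So Theorem~\ref{thm:NeRe} gives no information whatsoever for $n=5,6$, and your claim that it ``immediately rules out arithmeticity for $n\ge 5$'' is false as stated. For $n=5$ the paper instead uses the invariant trace field: $kM_5=\Q(\cos(\pi/5)i)$ has degree $\phi(5)=4$ over $\Q$, hence is not imaginary quadratic, which already obstructs arithmeticity. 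The case $n=6$ is the real trap: here $kM_6=\Q(\sqrt{-3})$ \emph{is} imaginary quadratic, and (as the paper notes) $\operatorname{vol}(M_6)$ is even an integer multiple of the figure-eight knot complement volume, so neither the trace-field-degree argument nor the short-geodesic argument applies. The paper settles $n=6$ by passing to the commensurable reflection orbifold and applying Vinberg's arithmeticity criterion to the Gram matrix of the polyhedron, finding an entry $-2\cosh(\ln 3)=-10/3$ that fails to be an algebraic integer. Your proposal contains no mechanism that would detect this.

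A second, smaller point: for the half-twist partners the paper does not check cases individually but instead proves once (Proposition~\ref{prop:RefOrbComm}) that every half-twist partner of $M_n$ is commensurable with the reflection orbifold of the common polyhedron, so arithmeticity transfers automatically in both directions. This is cleaner than your proposed case-by-case verification for $n=3,4$ and your surface-invariance argument for large $n$, and it is what makes the Vinberg step for $n=6$ apply to all half-twist partners simultaneously.
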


In Remark \ref{rem:M6} we highlight the curious case of $M_6$, whose arithmetic and geometric features resemble that of an arithmetic link complement, yet itself is not arithmetic.
As mentioned in the previous paragraph, part of the proof of Theorem \ref{mainthm1} requires comparing invariant trace fields of pretzel FAL complements. The \textit{invariant trace field} of a hyperbolic $3$-manifold $M = \mathbb{H}^{3} / \Gamma$ is the field generated by the traces of the products of squares of elements in $\Gamma$. Determining which fields are realized as invariant trace fields of hyperbolic link complements is a question of interest. Neumann has conjectured \cite{N2011} that every non-real number field arises as the invariant trace field of some hyperbolic $3$-manifold, yet to date, a relatively small collection of such fields have been verified to arise in this way.  

The invariant trace fields of arithmetic hyperbolic 3-manifolds are well understood \cite[Theorem 8.3.2]{MR}, and by cutting and gluing along thrice punctured spheres, any non-real multi-quadratic extension of $\Q$ can be realized  \cite[Theorem 5.6.4]{MR}.  Recently Champanerkar-Kofman-Purcell, assuming a conjecture of Milnor,  produced infinitely many incommensurable link complements each with invariant trace field $\Q(i,\sqrt{3})$ \cite{CKP2018}.  For certain subclasses of hyperbolic 3-manifolds, there are results restricting which invariant trace fields might arise (e.g. as for once-punctured torus bundles \cite[Thm. A]{Cal2006} or two-bridge knot complements \cite[Prop. 2.5]{RW2008}) or providing alternate characterizations (e.g. as for link complements \cite[Prop. 4.3]{NT2016}).

Here, we are  able to give an explicit description of the invariant trace fields of pretzel FAL complements by analyzing their cusp geometry. The work of Flint \cite{F2017} implies that the invariant trace field of a FAL complement is the same as its cusp field, that is, the field generated by all the cusp shapes of this link complement. In Section \ref{subsec:CA}, we compute the cusp shapes of these link complements and in Section \ref{section:tracefields} we analyze the properties of their invariant trace fields. The following theorem summarizes the major results from Section \ref{section:tracefields}.

\begin{thm}
\label{mainthmIVT}
The invariant trace field of any pretzel FAL with $n$ crossing circles is exactly  $\Q(\cos(\pi/n)i)$. In addition, there are only finitely many pretzel FAL complements with the same invariant trace field.
\end{thm}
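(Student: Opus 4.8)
The plan is to split Theorem \ref{mainthmIVT} into its two assertions and handle each via cusp geometry, using Flint's result that the invariant trace field of a FAL complement equals its cusp field. First I would recall from Section \ref{subsec:CA} the explicit computation of the cusp shapes of a pretzel FAL with $n$ crossing circles: each crossing circle gives a cusp whose shape is governed by the circle-packing combinatorics of the associated right-angled ideal polyhedral decomposition, and the key quantity that appears is $\cos(\pi/n)$, arising from the regular ideal $n$-gon (or equivalently from the dihedral symmetry forced by $n$ evenly spaced crossing circles with no twists). The twist-region cusps contribute only rational (or Gaussian-integer) data. So the cusp field is generated over $\Q$ by expressions built from $\cos(\pi/n)$ and $i$; the cusp shapes themselves are purely imaginary multiples, which is why one lands in $\Q(\cos(\pi/n)\,i)$ rather than $\Q(\cos(\pi/n), i)$. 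I would then argue both inclusions: that every cusp shape lies in $\Q(\cos(\pi/n)\,i)$ (direct from the formulas), and conversely that $\cos(\pi/n)\,i$ itself is generated by the cusp shapes (by exhibiting one cusp shape that is a nonzero rational multiple of $\cos(\pi/n)\,i$, or a rational combination of cusp shapes equal to it). Half-twist partners are handled simultaneously: adding half-twists through crossing circles changes cusp shapes only by translations/reflections that do not enlarge the field, so the invariant trace field depends only on $n$.

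For the finiteness statement, the strategy is to show that $n \mapsto \Q(\cos(\pi/n)\,i)$ is finite-to-one. The field $\Q(\cos(\pi/n))$ is the maximal real subfield of the cyclotomic field $\Q(\zeta_{2n})$, so $[\Q(\cos(\pi/n)):\Q] = \varphi(2n)/2$, and $[\Q(\cos(\pi/n)\,i):\Q] = \varphi(2n)$ (the element $\cos(\pi/n)\,i$ is not real, so adjoining it to $\Q$ picks up both the real subfield and a quadratic imaginary twist). Since $\varphi(2n) \to \infty$ as $n \to \infty$, for any fixed field $k$ there are only finitely many $n$ with $[\Q(\cos(\pi/n)\,i):\Q] = [k:\Q]$, hence only finitely many $n$ with $\Q(\cos(\pi/n)\,i) \cong k$. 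Because each $M_n$ and all of its half-twist partners have invariant trace field $\Q(\cos(\pi/n)\,i)$ by the first part, and for each $n$ there are only finitely many half-twist partners (there are $n$ crossing circles, each either with or without a half-twist, so at most $2^n$ partners up to the relevant equivalences — in any case finitely many), only finitely many pretzel FAL complements share a given invariant trace field.

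The main obstacle I anticipate is the first inclusion done carefully, namely pinning down the exact cusp shape formulas and verifying that no cusp shape has a real part that escapes $\Q(\cos(\pi/n)\,i)$ — i.e., confirming that the purely-imaginary form of the shapes survives the presence of half-twists and of the twist-region cusps, and that the generated field is exactly $\Q(\cos(\pi/n)\,i)$ and not some intermediate or larger field. This is really a bookkeeping problem about the Euclidean structure on the cusp cross-sections coming from the polyhedral decomposition, and it is where the detailed geometric analysis of Section \ref{subsec:CA} is essential. The degree computation $[\Q(\cos(\pi/n)\,i):\Q] = \varphi(2n)$ also deserves a careful justification, since one must check that $i \notin \Q(\cos(\pi/n))$ for all $n \geq 3$ (true, as $\Q(\cos(\pi/n))$ is totally real) and that $\cos(\pi/n) \notin \Q(i)$ except in the small cases already covered — both of which are elementary but should be stated.
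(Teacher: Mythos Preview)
Your overall strategy matches the paper's: compute the cusp field via the explicit cusp shapes of Section~\ref{subsec:CA}, invoke Flint to identify it with the invariant trace field, then use a degree argument for finiteness. But there is a genuine error in your degree computation, and your treatment of half-twist partners diverges from the paper's in a way that leaves a gap.

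The error: you claim $[\Q(\cos(\pi/n)i):\Q] = \varphi(2n)$, reasoning that adjoining $\cos(\pi/n)i$ ``picks up'' the real field $\Q(\cos(\pi/n))$. It does not. Squaring gives $(\cos(\pi/n)i)^2 = -\cos^2(\pi/n) = -\tfrac{1}{2}(1+\cos(2\pi/n))$, so $\Q(\cos(\pi/n)i)$ contains $\cos(2\pi/n)$ but need not contain $\cos(\pi/n)$. For $n=6$ one has $\cos(\pi/6)i = \tfrac{1}{2}\sqrt{-3}$, generating $\Q(\sqrt{-3})$ of degree $2 = \varphi(6)$, not $\varphi(12) = 4$; for $n=4$ one gets $\Q(\sqrt{-2})$ of degree $2 = \varphi(4)$, not $\varphi(8)=4$. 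The paper's Lemma~\ref{lem:degreephi} obtains the correct degree $\varphi(n)$ by recognizing $\Q(\cos(\pi/n)i)$ as a quadratic extension of the totally real field $\Q(\cos(2\pi/n))$, which has degree $\varphi(n)/2$ over $\Q$. Your finiteness conclusion happens to survive, since $\varphi(n)\to\infty$ as well, but the degree formula as stated is wrong (it is off by a factor of $2$ for every even $n$).

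On half-twist partners: you propose to show directly that their cusp shapes stay in $\Q(\cos(\pi/n)i)$. For the crossing-circle cusps this is fine (Lemma~\ref{lem:twistedtiles} gives shapes visibly in $\Q(\cos(\pi/n)i)$), but a general half-twist partner has knot-circle cusps whose shapes are never computed in the paper, and your sketch does not explain how to handle them. The paper sidesteps this entirely: Proposition~\ref{prop:RefOrbComm} shows all half-twist partners of $M_n$ are commensurable, and since the invariant trace field is a commensurability invariant, $kM = kM_n$ follows at once. This is cleaner and avoids the extra cusp-shape computations your approach would need to complete.
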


These fields are particularly nice in that they are imaginary quadratic extensions of totally real number fields (i.e. CM-fields).  As of writing this, we are unaware of other hyperbolic 3-manifolds with these as their invariant trace fields for $n\ne 3,4,6$.  Additionally, as $n$ increases, the number of half-twist partners increases, thereby producing large collections of non-isometric manifolds realizing these invariants.

Our next major result examines commensurability classes of pretzel FAL complements. We say that two manifolds are \textit{commensurable} if they share a common finite-sheeted cover. The \textit{commensurability class} of a manifold $M$ is the set of all manifolds commensurable with $M$. It is usually difficult to determine if two hyperbolic $3$-manifolds are in the same commensurability class. Here, we determine exactly when two pretzel FAL complements  are commensurable with each other in terms of the number of crossing circles. To achieve this goal, we rely  on a fundamental result of Margulis \cite{Ma1991} which implies that if a hyperbolic $3$-manifold is non-arithmetic, then there exists a unique minimal orbifold in its commensurability class. For a non-arithmetic $M_n$, we show that it's minimal orbifold $\mathcal{O}_{n}$ is just the quotient of $M_n$ by a group of symmetries that are visually obvious in a particular diagram for these links; see Section \ref{sec:SymandHS} and Figure \ref{fig:SymDiag}. From here, we compute and compare the volumes of these minimal orbifolds, which help distinguish commensurability classes. We also determine commensurability relations for half-twist partners via a lemma from Chesebro--Deblois--Wilton \cite{CDW2012}. Finally, we deal with the few arithmetic pretzel FAL complements determined by Theorem \ref{mainthm1} on a case-by-case basis. 

\begin{thm}
	\label{mainthm2}
Suppose $M$ and $N$ are two hyperbolic pretzel FAL complements. Then $M$ and $N$ are commensurable if and only if they have the exact same number of crossing circles.
\end{thm}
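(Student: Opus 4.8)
The plan is to prove the biconditional in the two directions, and to split the ``only if'' direction along the arithmetic/non-arithmetic dichotomy of Theorem~\ref{mainthm1}.

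\emph{The ``if'' direction.} I would first observe that, since fully augmenting a pretzel link removes all full twists, every hyperbolic pretzel FAL complement with $n$ crossing circles is a half-twist partner of $M_n$: it is obtained from $\mathcal P_n$ by inserting a half-twist through each crossing circle in some subset $S\subseteq\{1,\dots,n\}$. So it is enough to show half-twist partners are commensurable. Inserting one half-twist is precisely mutation of $M_n$ along the twice-punctured crossing disk bounded by a crossing circle, which in the geometric decomposition of Section~\ref{sec:geodecomp} is a properly embedded totally geodesic thrice-punctured sphere, the mutation involution being the isometric involution of that sphere that swaps its two strand cusps. The mutation lemma of Chesebro--Deblois--Wilton~\cite{CDW2012} then yields that the mutant is commensurable with $M_n$. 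Iterating over $S$ shows all pretzel FAL complements with $n$ crossing circles lie in one commensurability class.

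\emph{The ``only if'' direction.} Suppose $M$ has $m$ and $N$ has $n$ crossing circles with $m<n$; by the ``if'' direction we may assume $M=M_m$, $N=M_n$. If exactly one of $m,n$ lies in $\{3,4\}$, then by Theorem~\ref{mainthm1} exactly one of $M_m,M_n$ is arithmetic, and arithmeticity is a commensurability invariant, so they are incommensurable. If $\{m,n\}=\{3,4\}$, both are arithmetic but by Theorem~\ref{mainthmIVT} their invariant trace fields are $\Q(\cos(\pi/3)i)=\Q(i)$ and $\Q(\cos(\pi/4)i)=\Q(\sqrt{-2})$, which differ, and the invariant trace field is a commensurability invariant. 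This leaves the case $m,n\geq5$, where both manifolds are non-arithmetic, and here I would use Margulis' theorem~\cite{Ma1991}: the commensurability class of a non-arithmetic $M_k$ contains a unique minimal orientable orbifold $\mathcal O_k$, finitely covered by everything in the class. The geometric heart of the argument (Section~\ref{sec:SymandHS}) is that $\mathcal P_n$ has no hidden symmetries for $n\geq5$; granting this, $\Comm^{+}(\pi_1 M_n)$ is the normalizer of $\pi_1M_n$, so $\mathcal O_n=M_n/\isom^{+}(M_n)$, the quotient by the diagram symmetries of Figure~\ref{fig:SymDiag}. Since the half-twist partners of $M_n$ are all commensurable with it, this same $\mathcal O_n$ is the minimal orbifold of every pretzel FAL complement with $n$ crossing circles. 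It then suffices to show $\mathcal O_m\not\cong\mathcal O_n$ for $m\neq n$, which I would do by volume: compute $\Vol(M_k)$ from the polyhedral decomposition and $|\isom^{+}(M_k)|$ from Section~\ref{sec:SymandHS} as explicit functions of $k$, and check that $\Vol(\mathcal O_k)=\Vol(M_k)/|\isom^{+}(M_k)|$ is strictly monotone in $k$ for $k\geq5$. A hypothetical commensurability $M_m\sim M_n$ would force $\Vol(\mathcal O_m)=\Vol(\mathcal O_n)$, hence $m=n$, a contradiction.

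\emph{The main obstacle.} The mutation argument and the arithmetic bookkeeping are routine given the earlier results; the real work is the non-arithmetic case, and specifically the identification of $\mathcal O_n$ via the assertion that $\mathcal P_n$ has no hidden symmetries for every $n\geq5$. I expect this to rest on the cusp-shape computations of Section~\ref{subsec:CA} feeding into a Neumann--Reid type obstruction, with the case $n=6$ --- whose invariant trace field $\Q(\sqrt{-3})$ is of exactly the shape where hidden symmetries tend to appear (cf.\ Remark~\ref{rem:M6}) --- requiring separate treatment. Once $\mathcal O_n$ is pinned down, the needed strict monotonicity of $\Vol(\mathcal O_k)$ is a finite calculation, but it genuinely depends on having closed forms for both $\Vol(M_k)$ and the order of the symmetry group.
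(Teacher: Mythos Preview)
Your proposal is correct and tracks the paper's argument closely: the ``only if'' direction is handled exactly as in Corollary~\ref{cor:comm}, splitting into the arithmetic/non-arithmetic cases, using invariant trace fields for $\{m,n\}=\{3,4\}$, and in the non-arithmetic range invoking Theorem~\ref{thm:noHS} (no hidden symmetries) to identify $\mathcal O_n^+=M_n/G_n^+$ and then the strict monotonicity of $f(k)=\Vol(M_k)/8k$ from Lemma~\ref{lemma:volprops} to separate the minimal orbifolds. Your identification of $n=6$ as the delicate case in the hidden-symmetries argument is also on the mark.

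One correction on the ``if'' direction: what you call the ``mutation lemma'' of \cite{CDW2012} is not quite the mechanism. Mutation along a totally geodesic thrice-punctured sphere preserves volume and invariant trace field, but there is no general statement that it preserves commensurability class; indeed the paper notes after Lemma~\ref{lem:crush} that there exist FALs not commensurable with their reflection orbifolds, so some extra hypothesis is genuinely needed. The CDW input actually used is their Lemma~7.4 (restated here as Lemma~\ref{lem:crush}), a \emph{crushtacean} criterion: for each crossing-circle edge $e_i$ one must exhibit a reflective (untwisted case) or rotational (twisted case) involution of $C_{\mathcal P_n}$ swapping the endpoints of $e_i$. Proposition~\ref{prop:RefOrbComm} verifies this for the pretzel crushtacean (both involutions exist simultaneously about the horizontal axis of Figure~\ref{fig:Crushtacean}), and the conclusion is that every half-twist partner covers the common reflection orbifold $\mathcal O_P$. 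So your plan goes through once you replace the mutation appeal with this combinatorial check.
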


An essential part of the proof of Theorem \ref{mainthm2} is showing that the minimal orbifold $\mathcal{O}_{n}$ is just the quotient of $M_n$ by a group of symmetries. In general, the minimal orbifold $\mathcal{O}$ in the commensurability class of a non-arithmetic hyperbolic $3$-manifold $M = \mathbb{H}^{3} / \Gamma$ is  $\mathcal{O} = \mathbb{H}^{3} / C(\Gamma)$, where $C(\Gamma) = \left\lbrace g \in \text{Isom}(\mathbb{H}^{3}) : | \Gamma : \Gamma \cap g \Gamma g^{-1} | < \infty \right\rbrace$ denotes the commensurator of $\Gamma$. Note that, $\Gamma \subset N(\Gamma) \subset C(\Gamma)$, where $N(\Gamma)$ denotes the normalizer of $\Gamma$. Elements of $N(\Gamma)/\Gamma$ correspond with symmetries of $M$, while elements of $C(\Gamma) / N(\Gamma)$ correspond with what we call \emph{hidden symmetries} of $M$. Thus, our proof of Theorem \ref{mainthm2} required us to show that $M_n$ has no hidden symmetries. This task is accomplished by showing that if $M_n$ did have hidden symmetries, then $\mathcal{O}_{n}$ would be a low volume, single-cusped, hyperbolic $3$-orbifold. Such orbifolds are either arithmetic or have very specific cusp shapes. In Section \ref{subsec:CA}, the necessary cusp shape analysis is provided to help eliminate the possibility of $M_n$ having hidden symmetries. 

Margulis's work \cite{Ma1991} mentioned above  can also be stated as a classification of arithmetic manifolds in terms of hidden symmetries: a hyperbolic $3$-manifold is arithmetic if and only if it has an infinite number of hidden symmetries.
Thus, it is natural to ask: how many hidden symmetries could a non-arithmetic hyperbolic $3$-manifold have? It is known, for example, that non-arithmetic two-bridge knot complements \cite[Thm. 3.1]{RW2008} and link complements \cite[Thm. 1.1]{MW2016} have no hidden symmetries. At the same time, there are a few  examples and methods for constructing links admitting hidden symmetries;  \cite{CD2017} gives one such construction.  However, can we find examples of non-arithmetic hyperbolic $3$-manifolds with the maximal number of hidden symmetries relative to volume? At most, the number of hidden symmetries of non-arithmetic hyperbolic $3$-manifolds could grow linearly with volume; see the remark after Corollary \ref{cor:countingHS}. In Section \ref{sec:HTPwithHS}, we construct examples exhibiting this optimal growth rate by considering particular half-twist partners of $M_n$, which we denote by $M_{n}'$; see Figure \ref{fig:HTPartner4}. To the best of our knowledge, there are no other explicit families of non-arithmetic hyperbolic $3$-manifolds realizing this growth rate described in the literature.  It is interesting to see that among half-twist partners, which share a number of geometric and topological features, we can have as many and as few hidden symmetries as possible. This is highlighted in the following theorem. 

\begin{thm}
\label{mainthm3}
For $n \geq 5$, each $M_n$ has no hidden symmetries, while the half-twist partner $M_{n}'$ has $2n$ hidden symmetries. Furthermore, the number of hidden symmetries of $M_{n}'$ grows linearly with their volumes.  
\end{thm}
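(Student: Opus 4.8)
The plan is to prove the two halves separately and then combine them. For the first assertion that $M_n$ has no hidden symmetries, I would argue by contradiction using the commensurator. If $M_n = \mathbb{H}^3/\Gamma_n$ had a hidden symmetry, then the minimal orbifold $\mathcal{O}_n = \mathbb{H}^3/C(\Gamma_n)$ in its commensurability class (which exists and is unique by Margulis, since $M_n$ is non-arithmetic for $n \ge 5$ by Theorem \ref{mainthm1}) would be a proper quotient of $\mathbb{H}^3/N(\Gamma_n)$, and in particular would have strictly smaller volume than $M_n/\mathrm{Sym}(M_n)$. I would first pin down $\mathrm{Sym}(M_n)$ and the symmetry quotient $M_n/\mathrm{Sym}(M_n)$ from the explicit diagrammatic symmetries (the dihedral symmetries permuting the $n$ crossing circles together with the reflection symmetries of the geometric decomposition, cf.\ Section \ref{sec:SymandHS} and Figure \ref{fig:SymDiag}), computing its volume and cusp structure. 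The key point is that $\mathcal{O}_n$ would then be a hyperbolic $3$-orbifold of very small volume, and crucially it would be \emph{single-cusped} — all cusps of $M_n$ are interchanged appropriately, or more precisely the crossing-circle cusps would have to map to a single cusp of $\mathcal{O}_n$ since a hidden symmetry must act transitively enough on cusps. Then I would invoke the classification of low-volume cusped orbifolds together with the cusp-shape analysis of Section \ref{subsec:CA}: such a small single-cusped orbifold is either arithmetic (contradicting non-arithmeticity of $M_n$ and the fact that arithmeticity passes to the commensurability class) or has a cusp cross-section of one of finitely many specific shapes, none of which is compatible with the computed cusp shapes of $M_n$ (which involve $\cos(\pi/n)$ and hence are not among the rigid low-volume shapes for $n \ge 5$). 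This rules out hidden symmetries.

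For the second assertion, I would exhibit the $2n$ hidden symmetries of $M_n'$ essentially by hand. The half-twist partner $M_n'$ (Figure \ref{fig:HTPartner4}) is built from the same geometric pieces as $M_n$ but with half-twists inserted through the crossing circles; the effect on the geometric decomposition is to make certain totally geodesic surfaces (the "crossing disks" or the twice-punctured spheres / reflection surfaces) appear in a more symmetric configuration. Concretely, I would identify a manifold or orbifold $W$ that $M_n'$ covers, where the covering is \emph{not} regular (or where the deck action does not extend to all of $M_n'$), so that the extra isometries of the cover that descend to $W$ but are not symmetries of $M_n'$ itself are precisely hidden symmetries. The natural candidate is to cut $M_n'$ along its system of totally geodesic thrice-punctured spheres / totally geodesic surfaces identified in Section \ref{subsec:CA}, observe that the resulting pieces have extra symmetries (a thrice-punctured sphere has a large symmetry group, and half-twists glue the pieces so that these extra symmetries are compatible across the gluing in a way they are not for $M_n$), and assemble these into $2n$ elements of $C(\Gamma_n')/N(\Gamma_n')$. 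I expect the count $2n$ to come from an order-$2$ symmetry of each of the $n$ "blocks" of the decomposition that becomes hidden after the half-twist. Care is needed to check these are genuinely hidden (not symmetries) and that they are distinct modulo $N(\Gamma_n')$; the former uses that $M_n'$ is non-arithmetic with the same symmetry group structure controlled via Section \ref{sec:SymandHS}, and the latter is a finite check.

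Finally, for the linear growth statement, I would combine the exact count $2n$ with a volume estimate $\mathrm{Vol}(M_n') = \Theta(n)$: the geometric decomposition of a pretzel FAL with $n$ crossing circles is built from $n$ ideal polyhedral pieces of uniformly bounded volume (indeed $\mathrm{Vol}(M_n)$ and $\mathrm{Vol}(M_n')$ are coarsely linear in $n$, using the volume bounds of Purcell \cite{P2007} or the explicit decomposition of Section \ref{sec:geodecomp}), so the number of hidden symmetries $2n$ is bounded below by a positive constant times $\mathrm{Vol}(M_n')$. Together with the general upper bound (linear growth is the maximum possible, per the remark after Corollary \ref{cor:countingHS}), this shows $M_n'$ realizes the optimal growth rate. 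The main obstacle I anticipate is the first part — rigorously excluding hidden symmetries of $M_n$ — because it requires both the low-volume orbifold classification input and a careful matching of cusp shapes; the construction of hidden symmetries for $M_n'$ is more hands-on and I expect it to go through once the right totally geodesic surfaces are identified.
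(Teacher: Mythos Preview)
Your outline for the first assertion (no hidden symmetries for $M_n$) matches the paper's approach closely: identify the visible symmetry group, quotient, and argue that a proper further quotient would be a low-volume single-cusped orbifold, which is ruled out by arithmeticity or cusp-field considerations. That part is fine.

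The gap is in the second assertion. You propose to \emph{construct} the $2n$ hidden symmetries of $M_n'$ directly, by cutting along totally geodesic surfaces and assembling block symmetries into elements of $C(\Gamma_n')/N(\Gamma_n')$. This is not what the paper does, and more importantly your plan cannot yield the exact count $2n$ without an ingredient you have omitted: the determination of $\mathrm{Sym}(M_n')$. Even if you produce $2n$ elements of $C(\Gamma_n')/\Gamma_n'$, showing they are \emph{hidden} (i.e.\ not in $N(\Gamma_n')/\Gamma_n'$) and that there are no others requires knowing $|\mathrm{Sym}(M_n')|$ exactly. You write that this is ``controlled via Section~\ref{sec:SymandHS}'', but that section computes $\mathrm{Sym}(M_n)$, which has order $16n$; the whole point is that $\mathrm{Sym}(M_n')$ is drastically smaller (order $8$), and establishing this is the real work.

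The paper's argument is indirect and quite different from yours. It first proves $\mathrm{Sym}(M_n')\cong(\mathbb{Z}/2)^3$ by showing that every symmetry must fix the unique untwisted crossing-circle cusp $C$, then must fix set-wise both the crossing disk $D$ and the reflection surface $W$ (this is where the totally geodesic surface analysis actually enters --- not to build hidden symmetries, but to constrain genuine ones), and hence acts on $\partial C$ in one of at most $8$ ways. Since the restriction map $\mathrm{Sym}(M_n')\to\mathrm{Sym}(C)$ is injective, $|\mathrm{Sym}(M_n')|\le 8$, and three visible involutions realize this bound. Then the count of hidden symmetries is a one-line computation: $M_n$ and $M_n'$ are commensurable half-twist partners with equal volume, so both cover the minimal orbifold $\mathcal{O}_n$ with the same degree; from the $M_n$ side this degree is $|\mathrm{Sym}(M_n)|=16n$ (no hidden symmetries), whence $[C(\Gamma_n'):N(\Gamma_n')]=16n/8=2n$. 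Your linear-growth paragraph is correct once the count $2n$ is in hand.
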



Showing that each $M_n'$ has $2n$ hidden symmetries first requires us to determine the symmetries of $M_{n}'$. This is a far more challenging task than determining the symmetries of $M_n$ since quotienting by a visually obvious group of symmetries no longer gives a low volume orbifold. Instead, a careful analysis of how a certain cusp of $M_{n}'$ can intersect totally geodesic surfaces in $M_{n}'$ is used to limit the number of possible symmetries. From here, we can bootstrap off the fact that $M_{n}$ and $M_{n}'$  cover the same minimal orbifold $\mathcal{O}_{n}$ and the degree of this covering map is the same in both cases since these manifolds have the same volume. Our analysis of symmetries and hidden symmetries of $M_n$  determine exactly the degree of this cover. In turn, the degree of this cover and the number of symmetries of $M_{n}'$ determine the number of hidden symmetries of $M_{n}'$.  

This paper is organized into six additional sections beyond the introduction. In Section \ref{sec:geodecomp}, we describe the geometric decomposition of pretzel FAL complements and their half-twist partners and provide an analysis of their cusp shapes. In Section \ref{sec:HRO}, we show that a pretzel FAL complement is commensurable with any of its half-twist partners. In Section \ref{section:tracefields}, we determine properties of the invariant trace fields of pretzel FAL complements; this work ultimately relies on the cusp shapes calculated earlier. Theorem \ref{mainthm1} is proved in Section \ref{sec:Arith}. In Section \ref{sec:SymandHS}, we determine the symmetries of $M_n$, prove that $M_n$ has no hidden symmetries, and  prove Theorem \ref{mainthm2}. Finally, in Section \ref{sec:HTPwithHS}, we analyze the symmetries and hidden symmetries of $M_{n}'$.

The authors acknowledge support from U.S. National Science Foundation grants DMS
1107452, 1107263, 1107367 ''RNMS: Geometric Structures and Representation Varieties`` (the
GEAR Network). We would also like to thank Dave Futer for his helpful suggestions. 


\section{Geometric decomposition of fully augmented pretzel links}
\label{sec:geodecomp}

In this section, we first describe how to construct a pretzel FAL on the level of link diagrams in Section \ref{subsec:diagrams}. We then describe how to build the complements of these links, both topologically and geometrically, in Section \ref{subsec:FALcomps}. Afterwards, we describe how to build a set of hyperbolic $3$-manifolds that are both topologically and geometrically similar to a FAL complement in Section \ref{subsec:GP}. Finally, in Section \ref{subsec:CA}, we analyze the cusp shapes of pretzel FAL complements.

\subsection{FAL diagrams}
\label{subsec:diagrams}

To construct a hyperbolic FAL, start with a diagram $D(K)$ of a link $K \subset \mathbb{S}^{3}$ that is \textit{prime}, \textit{twist reduced} with at least two twist regions, and \textit{nonspittable}; see \cite[Section 1]{FP2007} for appropriate definitions. We create a diagram for a FAL $\mathcal{F}$ from $D(K)$ by first adding a crossing circle around each \textit{twist region} in $D(K)$ (any maximal string of bigons arranged end to end in the link diagram or a single crossing). After augmenting $K$ by adding in the crossing circles, remove all full twists within each twist region. This leaves either no twists or a single half-twist in each twist region. The resulting link is our FAL $\mathcal{F}$, and Purcell \cite[Theorem 2.5]{P2011} shows that our assumptions on $D(K)$ imply that $\mathcal{F}$ is hyperbolic. See the diagrams in Figure \ref{fig:intropic} for an example of a link diagram $D(K)$ and its corresponding FAL diagram $D(\mathcal{F})$.  

A FAL decomposes into two sets of components: crossing circles (those added in the augmenting process) and knot circles (components coming from the original link $K$).  We refer to a crossing circle as \emph{twisted} if the two strands going through the crossing circle have a single half-twist.  Otherwise, the two strands have no twists, and we refer to the corresponding crossing circle as \emph{untwisted}.



For most of this paper, we will focus on FALs resulting from fully augmenting pretzel links.  Let $P_n$ denoted a pretzel link with $n$ twist regions, each with an even number of half-twists.  We let $\mathcal{P}_n$ denote the FAL resulting from fully augmenting $P_n$.  Thus $\mathcal{P}_n$ is a link with $2n$ components, half of which are (untwisted) crossing circles and the other half are knot circles. See Figure \ref{fig:PnPretzel1} for a diagram of $\mathcal{P}_{n}$. 


\begin{figure}[ht]
	\centering
	\begin{overpic}[width = \textwidth]{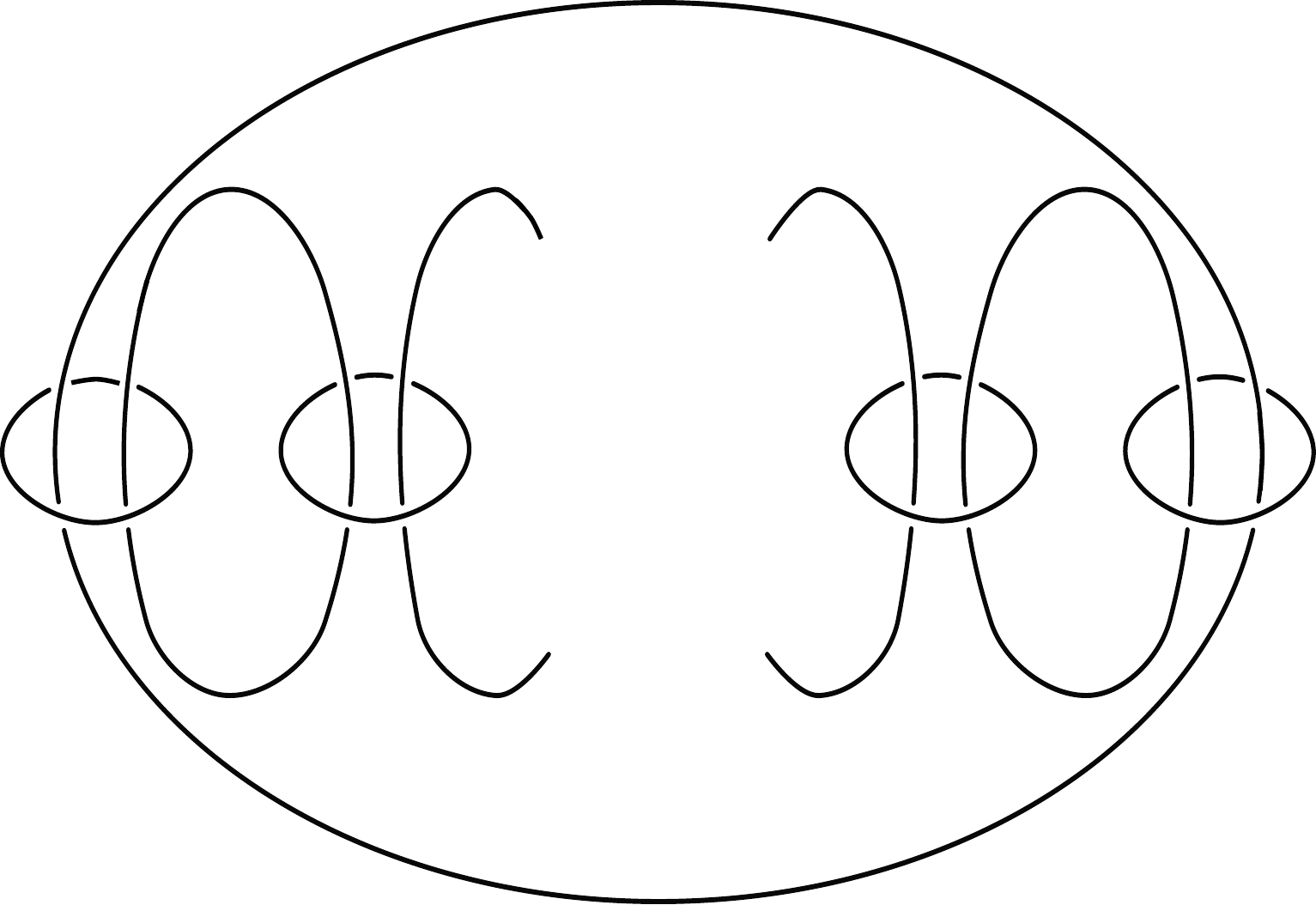}
		\put(1,28){$c_{1}$}
		\put(22,28){$c_{2}$}
		\put(62, 28){$c_{n-1}$}
		\put(86.5, 28){$c_{n}$}
		\put(47,19){\LARGE{\ldots}}
		\put(47,50){\LARGE{\ldots}}
	\end{overpic}
	\caption{$\mathcal{P}_{n}$, with crossing circles labeled $c_{i}$, $i = 1, \ldots, n$.}
		\label{fig:PnPretzel1}
\end{figure}


\subsection{FAL complements}
\label{subsec:FALcomps}

Given a hyperbolic FAL $\mathcal{F}$, consider the link complement $M_{\mathcal{F}} = \mathbb{S}^{3} \setminus \mathcal{F}$.  The following theorem collects a number of geometric properties of FALs coming from \cite{FP2007} and \cite{P2011}. In what follows, a \textit{crossing disk} is a twice-punctured disk whose boundary is a crossing circle of a FAL.

\begin{thm}
	\label{thm:FALstructure}
	Given a hyperbolic FAL $\mathcal{F}$,  its complement $M_{\mathcal{F}}$ has the following properties:
	
	\begin{enumerate}
		\item $M_{\mathcal{F}}$ decomposes into two identical ideal totally geodesic polyhedra, $P_{\pm}$, all of whose dihedral angles are right angles. 
		\item The faces of these polyhedra can be checkerboard colored shaded and white, where shaded faces are all triangles coming from crossing disks and white faces are portions of the projection plane bounded by the knot circles.  
		\item Intersecting a crossing disk with the projection plane creates two half disks. If we peel these two half disks apart, then four shaded faces are produced. Each of these four shaded faces is an ideal hyperbolic triangle, two in $P_{+}$ and two in $P_{-}$. 
	\end{enumerate}
\end{thm}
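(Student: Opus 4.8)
The plan is to recover the Agol--Thurston decomposition of $M_{\mathcal{F}}$ and then promote it to a geodesic one by appealing to hyperbolic rigidity. First I would record the two natural surfaces in the complement. Each crossing circle $c_i$ bounds a crossing disk $D_i\subset\mathbb{S}^{3}$ meeting $\mathcal{F}$ in exactly the two knot strands running through $c_i$; deleting the link, $D_i\cap M_{\mathcal{F}}$ is an open disk with two interior punctures together with a puncture ``at infinity'' along the $c_i$-cusp, i.e.\ a thrice-punctured sphere. Likewise all knot circles lie on the projection $2$-sphere $\Pi\subset\mathbb{S}^{3}$, and the part of $\Pi$ lying in $M_{\mathcal{F}}$ is a surface meeting each $D_i$ perpendicularly along a diameter arc, since $\Pi$ is a mirror of the diagram. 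By uniqueness of the complete hyperbolic structure on the thrice-punctured sphere, each of these surfaces is isotopic to a totally geodesic one, and after isotopy the crossing disks are pairwise disjoint and each meets $\Pi$ orthogonally.

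Next I would cut $M_{\mathcal{F}}$ simultaneously along $\Pi$ and along all of the crossing disks $D_i$. The half-twists sitting inside the twisted crossing circles can first be undone by a homeomorphism of $M_{\mathcal{F}}$ (they only reposition $\Pi$), or simply carried along through the cut. Reflection in $\Pi$ exchanges the two resulting pieces, so they are combinatorially identical; call them $P_{\pm}$. Each $P_{\pm}$ is a ball whose boundary sphere is tiled by \emph{white} faces, the closures of the checkerboard regions of $\Pi\setminus\mathcal{F}$ on one side, and \emph{shaded} faces coming from the cut-open crossing disks. Cutting along $D_i$ doubles it, and cutting along $\Pi$ slices each copy along its diameter arc, so $D_i$ contributes exactly four shaded faces, two in $P_{+}$ and two in $P_{-}$; each such face is a disk with its two link-punctures removed, hence an ideal triangle with one ideal vertex from each of the two knot strands and one from the arc of $c_i$. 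Since every ideal vertex of $P_{\pm}$ lies on a cusp, $P_{\pm}$ is an ideal polyhedron, which gives the combinatorial content of (1), (2) and (3).

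The main obstacle, and the step I would defer to \cite{FP2007} and \cite{P2011}, is upgrading this combinatorial splitting to an honestly geodesic one all of whose dihedral angles are $\pi/2$. Here I would note that both the white faces and the shaded faces are totally geodesic (again by thrice-punctured-sphere rigidity, applied after doubling), that they meet along geodesic edges, and that every dihedral angle is right: shaded--white angles are right because $\Pi$ meets each crossing disk orthogonally, and the remaining shaded--shaded edges, where two crossing disks are glued across the excised half of $\Pi$, are forced to be right by the reflection symmetry interchanging $P_{+}$ and $P_{-}$. One then verifies Andreev's conditions (equivalently, uses the circle-packing realization in the Agol--Thurston construction) to see that the abstract right-angled ideal polyhedron $P_{\pm}$ is realized geodesically in $\mathbb{H}^{3}$, and Mostow--Prasad rigidity identifies that realization with the structure induced on the pieces of $M_{\mathcal{F}}$. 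Since all of this is carried out in the cited works, in the write-up I would state the theorem as a summary with references, retaining only as much of the construction above as is needed to fix the terminology (crossing disk, shaded/white faces, $P_{\pm}$) used in the later sections.
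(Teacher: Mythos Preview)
Your proposal is correct and in the same spirit as the paper: Theorem~\ref{thm:FALstructure} is not proved here but is stated as a collection of results from \cite{FP2007} and \cite{P2011}, and the paragraphs following it give exactly the kind of expository summary of the Agol--Thurston cut-along-$\Pi$-and-crossing-disks construction that you outline, for the purpose of fixing terminology. One small correction: white faces of a general FAL need not be thrice-punctured spheres, so ``thrice-punctured-sphere rigidity after doubling'' does not quite justify their total geodesicity; the correct mechanism, which you also mention, is the circle-packing/Andreev realization of the right-angled ideal polyhedron.
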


Here, we give a short summary of how to explicitly build the polyhedra $P_{\pm}$ and how to glue these polyhedra together to form a FAL complement. We will examine this decomposition in terms of our pretzel FAL complements,  $M_{n} = \mathbb{S}^{3} \setminus \mathcal{P}_{n}$, though this decomposition holds more generally for hyperbolic FALs. Examples where a FAL has some twisted crossing circles will be discussed in Section \ref{subsec:GP}. For more explicit details on geometric decompositions of FALs, we refer the reader to \cite{FP2007} and \cite{P2011}. By cutting $\mathbb{S}^{3} \setminus \mathcal{P}_{n}$ along the projection plane, we subdivide our link complement into two identical $3$-balls, one above the projection plane and one below. The crossing circles lie perpendicular to the projection plane, and so, each crossing disk is sliced in half by this process. To obtain our checkerboard coloring, peel each half crossing disk apart to obtain four shaded faces, two in each $3$-ball. By shrinking the link components to become ideal vertices, we obtain the ideal polyhedra $P_{\pm}$, each with the desired checkerboard coloring, which is depicted in Figure \ref{fig:polydecomp}. Note that, each crossing circle $c_{i}$ from our link diagram produces two shaded triangular faces, $c_{i}^{a}$ and $c_{i}^{b}$, in one of our polyhedra. In addition, there are $n+2$ white faces, one for each region of the projection plane, which are labeled $W_{i}$, for $i = 1, \ldots, n+2$. To reverse this process, we first glue each white face in $P_{+}$  to the white face in $P_{-}$ that corresponds with the same white face in the projection plane. We then glue up pairs of shaded faces corresponding to the same half crossing disk in the same polyhedra.

\begin{figure}[ht]
	\centering
	\begin{overpic}[width = \textwidth]{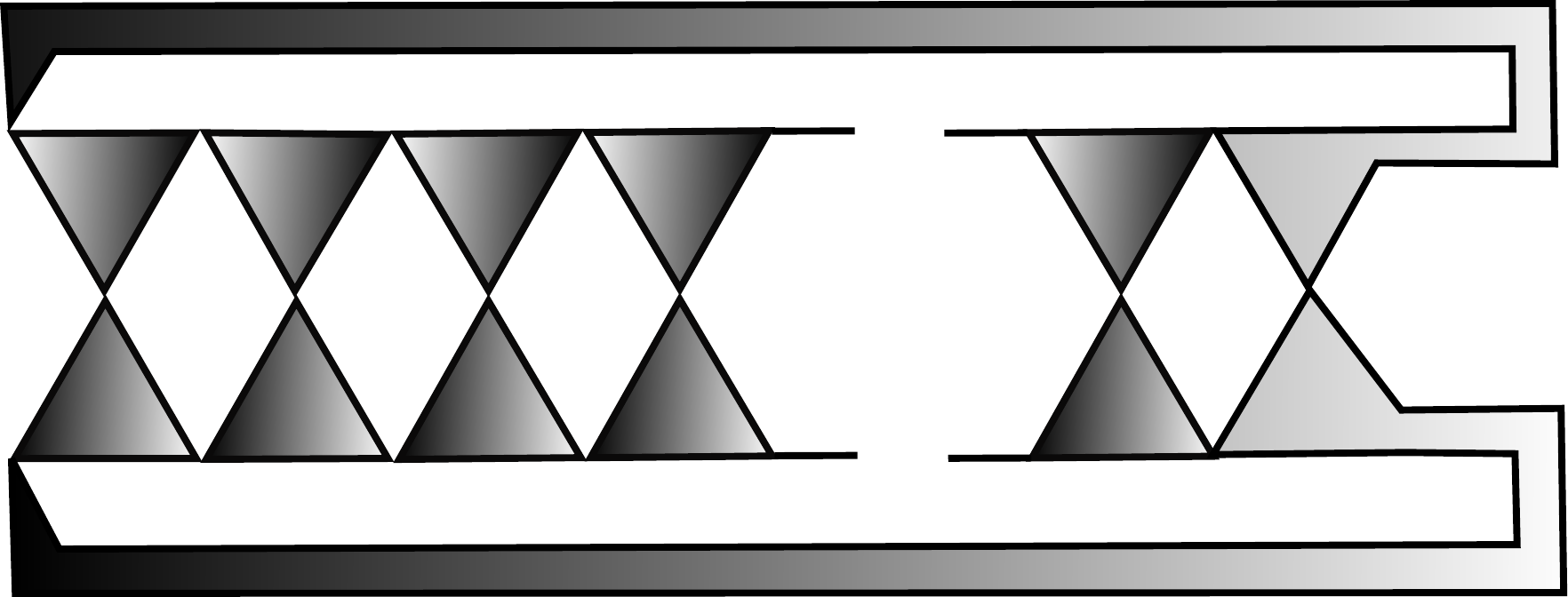}
		\put(5.5,24.5){$c_{1}^{a}$}
		\put(5.5,12){$c_{1}^{b}$}
		\put(18, 24.5){$c_{2}^{a}$}
		\put(18, 12){$c_{2}^{b}$}
		\put(30, 24.5){$c_{3}^{a}$}
		\put(30, 12){$c_{3}^{b}$}
		\put(42, 12){$c_{4}^{b}$}
		\put(42, 24.5){$c_{4}^{a}$}
		\put(69, 12){$c_{n-1}^{b}$}
		\put(69, 25){$c_{n-1}^{a}$}
		\put(82, 12){$c_{n}^{b}$}
		\put(82, 25){$c_{n}^{a}$}
		\put(11.5, 19){$W_{1}$}
		\put(23, 19){$W_{2}$}
		\put(35.5, 19){$W_{3}$}
		\put(74, 19){$W_{n-1}$}
		\put(89, 19){$W_{n}$}
		\put(50, 31.5){$W_{n+1}$}
		\put(50,5.5){$W_{n+2}$}
		\put(54,19){\LARGE{\ldots}}
	\end{overpic}
	\caption{The checkerboard tiling of $\mathbb{S}^{2}$ associated to $\mathcal{P}_{n}$. This tiling determines the faces of the polyhedra $P_{\pm}$.}
		\label{fig:polydecomp}
\end{figure}

The previous paragraph just gives a topological description of $P_{\pm}$. We now describe a geometric description of $P_{\pm}$ as regular ideal polyhedra in $\mathbb{H}^{3}$ that are reflections of each other.  The checkerboard coloring of the faces described in Theorem \ref{thm:FALstructure} actually provides instructions on how to explicitly build these two polyhedra in $\mathbb{H}^{3}$. First, just consider the set of white faces. This set induces a circle packing of $\mathbb{S}^{2}$, where we draw a circle for each white face, and two circles are tangent to each other if the corresponding white faces share a vertex. The shaded triangular faces also induce a circle packing of $\mathbb{S}^{2}$, dual to the white circle packing. The circle packing for the white faces is given in Figure \ref{fig:PretzelCirclePacking}. These circles are given the same labels as their corresponding faces in the checkerboard tiling of Figure \ref{fig:polydecomp}. To build one of our polyhedra, extend these circles (both white and shaded) on $\mathbb{S}^{2} \cong \partial \mathbb{H}^{3}$ to become totally geodesic planes in $\mathbb{H}^{3}$. Cut away these planes to obtain $P_+$. The polyhedron $P_-$ can be obtained by reflecting $P_+$ across any of its white faces.   Faces on $P_{\pm}$ that are reflections of each other will be called \emph{corresponding} faces.

\begin{figure}[ht]
	\centering
	\begin{overpic}[scale=.80]{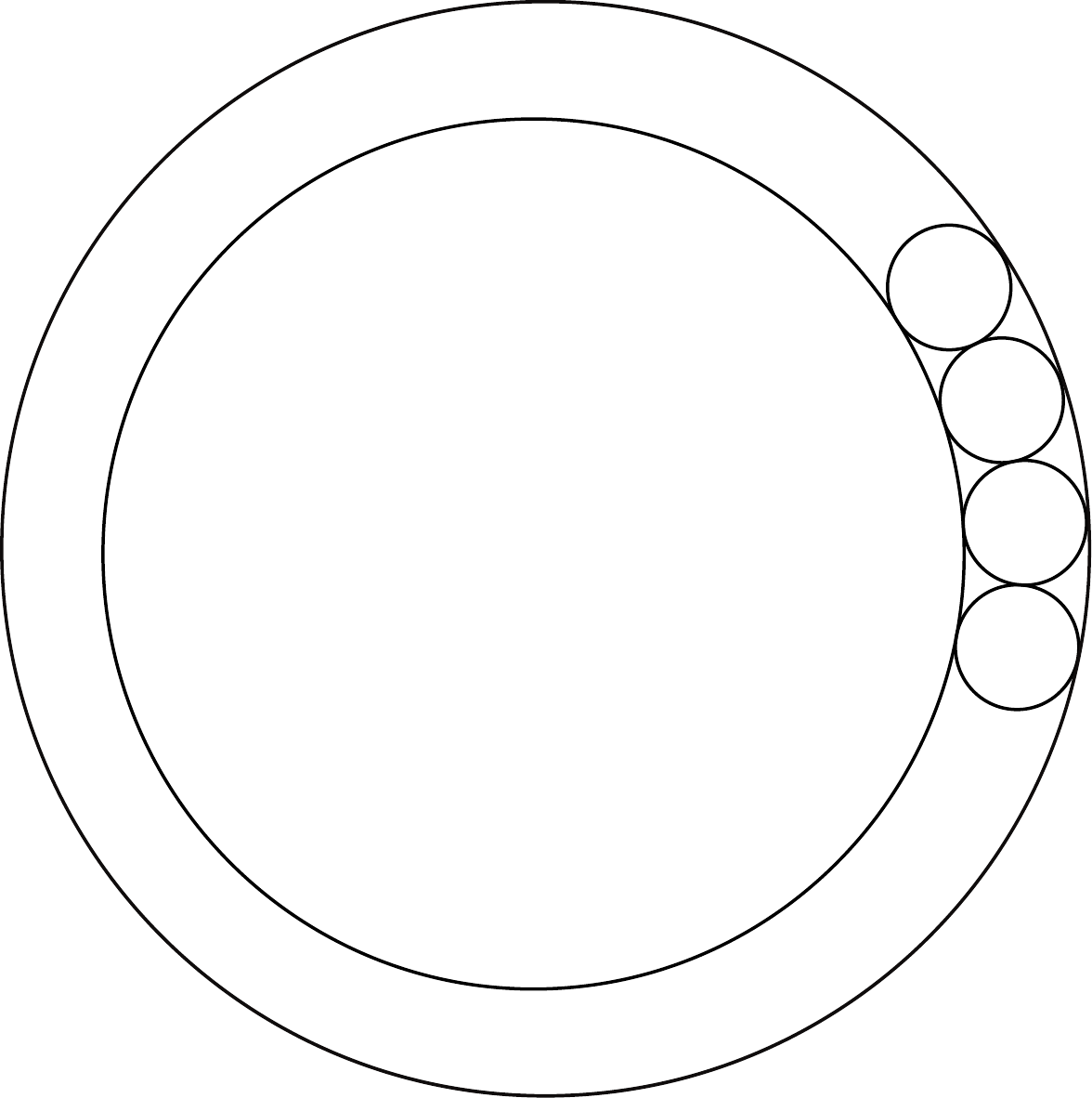}
		\put(89,62.5){$W_{1}$}
		\put(91.5,51){$W_{2}$}
		\put(91, 40){$W_{3}$}
		\put(84.5, 72.5){$W_{n}$}
		\put(65, 25){$W_{n+2}$}
		\put(87, 9){$W_{n+1}$}
		\put(76, 85){\rotatebox{-40}{\LARGE{\ldots}}}
		\put(86.5,26){\rotatebox{65}{\LARGE{\ldots}}}
	\end{overpic}
	\caption{The circle packing of $\mathbb{S}^{2}$ for the white faces of the polyhedral decomposition of $\mathcal{P}_{n}$.}
	\label{fig:PretzelCirclePacking}
\end{figure}

This is a convenient time to introduce a helpful combinatorial structure related to the circle packing.  It is used to provide examples of polyhedral partners for $M_{n}$ in Subsection \ref{subsec:GP} and to prove that $\mathcal{P}_n$ is commensurable with the reflection orbifold of $P_+$.  We proceed with the definition.

\begin{defn}\label{def:Crushtacean}
	The \textit{crushtacean} $C_{\mathcal{F}}$ of a FAL $\mathcal{F}$ is the trivalent graph whose vertices correspond to shaded faces of the circle packing, and an edge is inserted between two vertices if and only if the corresponding shaded faces share a vertex in the circle packing.
\end{defn}

The crushtacean was called the dual to the nerve of the circle packing by Purcell in \cite{P2011}.  It was first called the crushtacean in Chesebro--Deblois--Wilton \cite{CDW2012} and named so because it can be constructed by crushing the shaded faces of the checkerboard tiling of $\mathbb{S}^{2}$ associated to $\mathcal{P}_{n}$ down to vertices. This graph is trivalent since all shaded faces in this checkerboard tiling of $\mathbb{S}^{2}$ are triangles. In Figure \ref{fig:Crushtacean}, we show the crushtacean associated to our pretzel FAL $\mathcal{P}_{n}$, which comes from crushing the shaded faces seen in Figure \ref{fig:polydecomp} down to points.

\begin{figure}[ht]
	\centering
	\begin{overpic}[scale=.80]{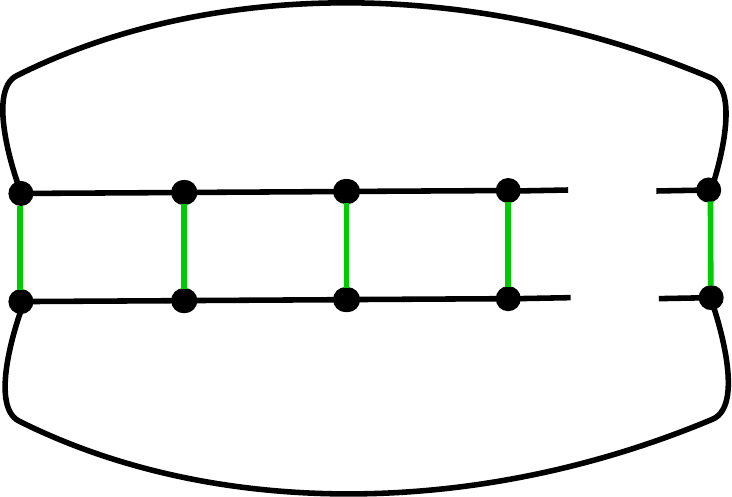}
		\put(77.5,33){\LARGE{\ldots}}
		\put(6, 32){$c_{1}$}
		\put(29, 32.5){$c_{2}$}
		\put(52, 32.5){$c_{3}$}
		\put(100, 33){$c_{n}$}
	\end{overpic}
	\caption{The crushtacean associated to the pretzel FAL $\mathcal{P}_{n}$. Edges coming from crossing circles have been labeled $c_{i}$ for $i =1, \ldots, n$ and colored green.}
	\label{fig:Crushtacean}
\end{figure}


\subsection{Polyhedral Partners}
\label{subsec:GP}

In this section, we describe how to build sets of cusped hyperbolic $3$-manifolds that have a number of geometric and topological features in common with a FAL complement. To start, fix a FAL $\mathcal{F}$ that has $n$ crossing circles. Each crossing circle is either twisted or untwisted. By varying which crossing circles contain a half-twist, we can create $2^{n}$ FAL link diagrams that all differ by some number of half-twists. Some of these diagrams could correspond to the same link. However, it easy to show that many of these link (and their corresponding complements) will be different by considering the number of link components and their corresponding cusp shapes. Nevertheless, all of these link complements are built from the same two identical ideal totally geodesic polyhedra, $P_{\pm}$, just with gluing instructions modified. Gluing shaded faces across their common vertex on the same polyhedron results in an untwisted crossing circle while gluing a shaded face in $P_+$ to its mate in $P_-$ results in a twisted crossing cricle (see \cite[Figure 3]{P2011}). Since this is the only modification made between two FALs that differ by some number of half-twists, we would expect this set of FAL complements to be ``geometrically similar'' to one another. This all motivates the following definitions.

\begin{defn}	\label{defn:HTPart}
	Let $\mathcal{F}$ be a hyperbolic FAL with $n$ crossing circles, and fix an ordering on these crossing circles. At each crossing circle, assign a $0$ to designate an untwisted crossing circle and assign a $1$ to designate a twisted crossing circle. Define $\mathcal{F}' = \mathcal{F}(\epsilon_{1}, \epsilon_{2}, \ldots, \epsilon_{n})$ to be the hyperbolic FAL obtained from $\mathcal{F}$ by assigning $\epsilon_{i} \in  \left\lbrace 0, 1\right\rbrace$ to the $i^{th}$ crossing circle of $\mathcal{F}$. Any such $M_\mathcal{F'}$ is called a \textit{half-twist partner} of $M_\mathcal{F}$. We let $HTP(\mathcal{F})$ designate the set of all half-twist partners of $M_{\mathcal{F}}$.
\end{defn}

See the middle diagram of Figure \ref{fig:intropic} for a diagram of $\mathcal{P}_{3}(0,1,0)$. 

In fact, we can generalize the above definition and just consider cusped hyperbolic $3$-manifolds that are built from the polyhedra $P_{\pm}$ with gluing instructions modified along shaded faces.

\begin{defn}\label{defn:GeomPart}
	Let $M_{\mathcal{F}} = \mathbb{S}^{3} \setminus \mathcal{F}$ be a hyperbolic FAL with associated polyhedra $P_{\pm}$. We say that $M$ is a \textit{polyhedral partner} of $M_{\mathcal{F}}$ if  $M$ can be constructed from $P_{\pm}$ as follows:
\begin{enumerate}
	\item Corresponding white faces of $P_{\pm}$ are identified in the same manner as $M_{\mathcal{F}}$, and
	\item If $\varphi:G\to H$ identifies shaded faces $G$ and $H$, then their corresponding faces are identified by conjugating $\varphi$ with the reflection between $P_{\pm}$. 
\end{enumerate}
We let $PP(\mathcal{F})$ designate the set of all polyhedral partners of $M_{\mathcal{F}}$. 
\end{defn}	

\begin{rmk}\label{rmk:GeomPartners}
	Polyhedral partners are precisely the manifolds obtained using the admissible gluing patterns defined by Harnois-Olson-Trapp in \cite{HOT2018}.  Theorem 1 and Lemma 1 of that paper combine to show that, assuming corresponding white faces are glued without twisting (criteria (i) of Definition \ref{defn:GeomPart}), then criteria (ii) is necessary and sufficient to conclude that $M$ is hyperbolic.  Intuitively, polyhedral partners can be thought of as slicing $M_{\mathcal{F}}$ along the crossing disks, and then regluing in any manner that maintains a reflection surface.  

The set $PP(\mathcal{F})$ is a rich collection of manifolds, some of which are topologically well understood.  For convenience we include an example of a \emph{nested link} which is a polyhedral partner of $\mathcal{P}_5$; for more detail see \cite{HOT2018}. The combinatorial data that describes a nested link is an \emph{edge-symmetric spanning forest} of the crushtacean.  The left of Figure \ref{fig:NestedPartner} shows an edge-symmetric spanning forest of the crushtacean $\mathcal{C}_{\mathcal{P}_5}$.  Note that each tree in the spanning forest admits an involution swapping the endpoints of the ``middle" edge (hence the name edge-symmetric).  The shaded faces whose corresponding vertices are swapped under this involution are glued together.  The manifold resulting from this gluing is a \emph{generalized} FAL complement in $S^3$ as seen on the right of Figure \ref{fig:NestedPartner}.  Crossing circles of generalized FALs do not typically bound thrice punctured spheres.  The links constructed as above are termed \emph{nested} because the crossing circles nest in planes so that the regions between them form thrice punctured spheres.  The point of this discussion is that $PP(\mathcal{F})$ is a much broader class of links than FALs, and many of them have explicit topological descriptions.
\end{rmk}

\begin{figure}[ht]

	\begin{overpic}[width = 0.35\textwidth]{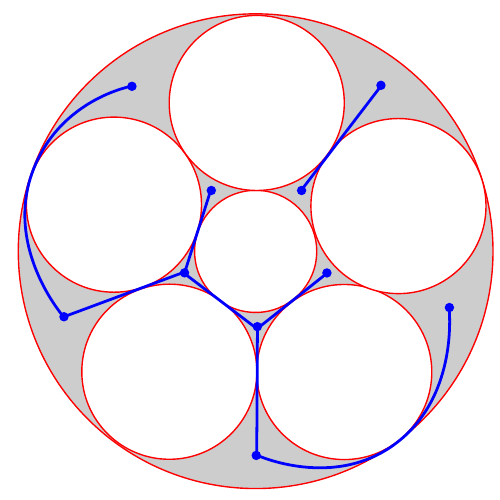} 
	\end{overpic}  \hspace{.25in} 
	\begin{overpic}[width = 0.55\textwidth]{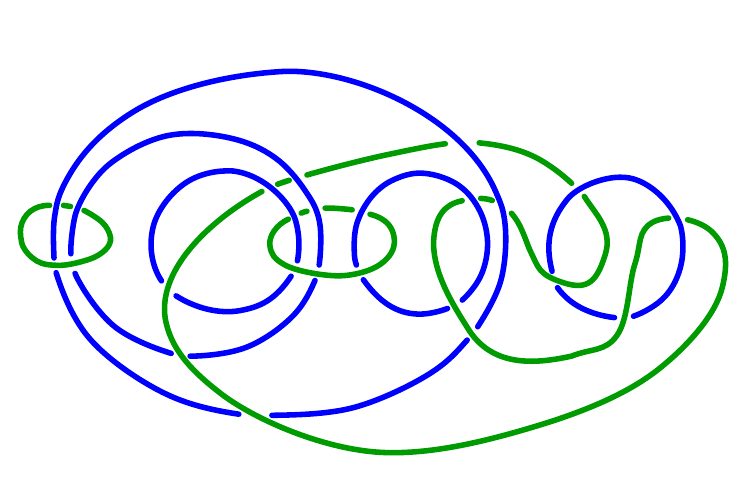}
	\end{overpic}
	\caption{An edge-symmetric forest and corresponding polyhedral partner of $\mathcal{P}_5$}
	\label{fig:NestedPartner}
\end{figure}


%


\subsection{Cusp Analysis}
\label{subsec:CA}

In this section we compute the cusp shapes for the link complements $M_n$.  The \textit{cusp shape} of a cusp $C$ of a cusped hyperbolic $3$-manifold $M$ is the Euclidean similarity class for the boundary torus  $\partial C$, which can be computed as the ratio of the meridian over the longitude on $\partial C$. 
The calculations of this section will later assist in analyzing arithmeticity, invariant trace fields, and hidden symmetries of $M_n$.

Since there is a symmetry of $M_{n}$ taking any component to any other, all cusp shapes are the same. We refer the reader to Figure \ref{fig:FAl3Pretz2} for a symmetric diagram of $\mathcal{P}_{n}$, where it is clear that each link component can be exchanged with its clockwise neighbor via a $90^{\circ}$ rotation of the entire chain followed by a rotation.  Symmetries of $(\mathbb{S}^{3}, \mathcal{P}_{n})$ are also symmetries of $M_n$ by Mostow--Prasad rigidity. Therefore, we isolate our attention to a single cusp corresponding to a crossing circle.   In general, such a cusp has torus boundary tiled by two identical rectangles, coming from intersecting $P_{\pm}$ with the horoballs centered at ideal vertices in $\partial \mathbb{H}^{3}$ corresponding to this crossing circle.  For a more thorough and general description of cusp shapes for FALs, see Lemma 2.3 and Lemma 2.6 of \cite{FP2007}.


Now recall that the circle packing for $P_+$ consists of a ring of $n$ circles nested between concentric circles (see Figure \ref{fig:PretzelCirclePacking}).  Assume that the ring of smaller circles are all unit circles, and consider the closer view given in Figure \ref{fig:PreInv}.  The shape of the cusp corresponding to $p$ will be determined.

\begin{center}
\begin{figure}[h]
\includegraphics[width=2in]{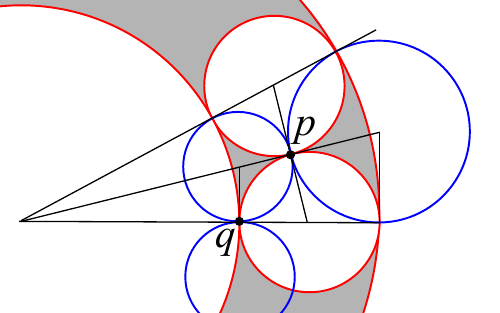}
\caption{Partial circle packing of $P_+$}
\label{fig:PreInv}
\end{figure}
\end{center}

Inverting in a unit circle $S$ centered at $p$ sends $p$ to infinity and the four faces of $P_+$ incident with $p$ will invert to intersect a horosphere in a rectangle $R$, whose shape can be explicitly calculated.  To do so, recall some elementary facts about inversion in planar circles.  

\begin{center}
\begin{figure}[h]
\includegraphics[width=2in]{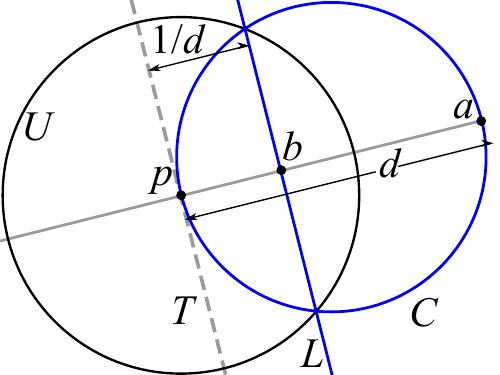}
\caption{Inverting circles to lines}
\label{fig:InvCircles}
\end{figure}
\end{center}

Let $U$ denote a unit circle centered at $p$. Let $C$ be any circle with diameter $d$ through $p$, and with tangent line $T$ there (see Figure \ref{fig:InvCircles}).  Inverting $C$ across $U$ yields a line $L$ parallel to $T$.  Moreover, the farthest point on $C$ from $p$ inverts to the closest point on $L$ (the points labeled $a$ and $b$ in Figure \ref{fig:InvCircles}).   Since $U$ is a unit circle, the distance from $L$ to $p$ is $1/d$.

\begin{figure}[h]
\[
\begin{array}{ccccc}
\includegraphics[width=1.25in]{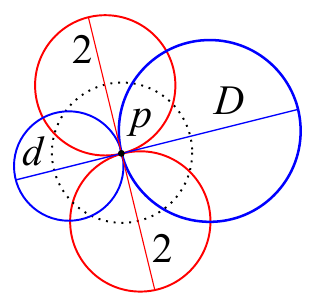} &\hspace{0.00in}& \includegraphics[width=1.7in]{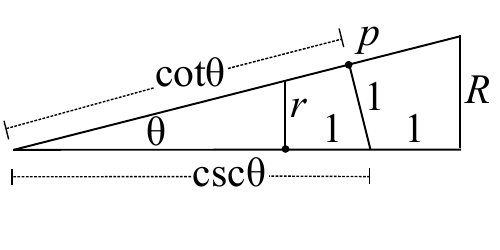} & \hspace{0.05in}&\includegraphics[width=1.25in]{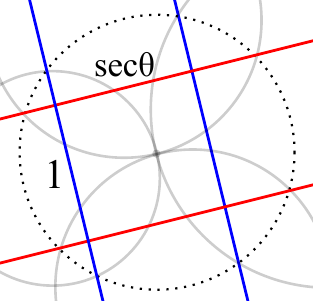}\\
(a) &\hspace{0.05in}& (b) &\hspace{0.0in}& (c)
\end{array}
\]
\caption{Inverting $p$ to infinity}
\label{fig:PreInvPCirc}
\end{figure}

We now return to the task of calculating the cusp shape for $\mathcal{P}_n$.

\begin{lemma}\label{lem:OneTile}
Let $P_+$ have $p$ at infinity, and $H$ be a horosphere centered at infinity. The shape of $R = P_+ \cap H$ is $i\sec(\pi/n)$.
\end{lemma}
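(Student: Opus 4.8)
The plan is to realise the boundary rectangle $R$ explicitly: invert the four circles of the packing that pass through the ideal vertex $p$ so that they become the four lines bounding $R$, and then read off its two side lengths.

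\emph{Coordinates for the packing.} First I would fix coordinates for the circle packing of Figure~\ref{fig:PretzelCirclePacking}. Since adjacent unit ring circles are tangent, their centres lie on a circle of radius $\rho$ about a common centre $O$ with $2\rho\sin(\pi/n)=2$, so $\rho=\csc(\pi/n)$; the inner and outer concentric circles, being tangent to every ring circle, then have radii $\csc(\pi/n)-1$ and $\csc(\pi/n)+1$. Rotate so that $p$, the tangency point of two adjacent ring circles, lies on the positive real axis; then $p=\cot(\pi/n)$ and the two ring circles through $p$ are centred at $\cot(\pi/n)\pm i$. By the description in Section~\ref{subsec:FALcomps} and Figures~\ref{fig:polydecomp}--\ref{fig:PretzelCirclePacking}, the crossing-circle cusp at $p$ is the ideal vertex incident to exactly four faces: these two (white) unit circles, together with the two (shaded) circles passing through the three tangency points of the two curvilinear triangles that meet $p$ --- one opening outward toward the outer concentric circle and one opening inward toward the inner one.

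\emph{The two shaded circles.} Using the classical fact that the circle through the three mutual tangency points of three pairwise tangent circles is orthogonal to all three, the outward shaded circle $S_{a}$ is the circle through $p$ and through the two points where the unit ring circles touch the outer concentric circle; symmetry across the real axis puts its centre on the real axis, and a single quadratic equation then gives its diameter $d_{a}=2\bigl(1+\sin(\pi/n)\bigr)/\cos(\pi/n)$. The identical computation against the inner concentric circle gives the inward shaded circle $S_{b}$, of diameter $d_{b}=2\bigl(1-\sin(\pi/n)\bigr)/\cos(\pi/n)$. In particular $d_{a}d_{b}=4$ and $d_{a}+d_{b}=4\sec(\pi/n)$.

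\emph{Inverting.} Inverting in the unit circle centred at $p$ turns each of the four circles through $p$ into a line, and by the elementary fact recalled just before the statement (Figure~\ref{fig:InvCircles}) a circle of diameter $d$ through $p$ becomes a line at distance $1/d$ from $p$; moreover the two circles of a pair tangent at $p$ invert to parallel lines on opposite sides of $p$, and the lines coming from white circles are perpendicular to those coming from shaded circles because white and shaded circles meet orthogonally at $p$. Hence the two unit ring circles give the pair of sides of $R$ lying in white faces, parallel and a distance $\frac12+\frac12=1$ apart, while the two shaded circles give the pair lying in shaded faces, parallel and a distance $\frac{1}{d_{a}}+\frac{1}{d_{b}}=\frac{d_{a}+d_{b}}{d_{a}d_{b}}=\sec(\pi/n)$ apart. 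So $R$ is a rectangle whose sides lying in shaded faces have length $1$ and whose sides lying in white faces have length $\sec(\pi/n)$; normalising the shaded-face side to $1$ as the real period, the shape of $R$ is $i\sec(\pi/n)$. The only real pitfalls are bookkeeping: one must identify the four faces at $p$ correctly and notice that the two shaded circles there actually pass through $p$ (otherwise they would not invert to lines), pinning them down via their three tangency points; and in the inversion one must keep straight which colour of face produces which pair of parallel lines, so that the lengths $1$ and $\sec(\pi/n)$ land on the correct sides and the normalisation yields $i\sec(\pi/n)$ rather than $i\cos(\pi/n)$. The diameter computation for $S_{a}$ (and $S_{b}$) is the computational crux, but it collapses to one quadratic.
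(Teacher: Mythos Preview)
Your proof is correct and follows essentially the same route as the paper's: invert in a unit circle centred at $p$, use the diameter-to-distance rule to find that the white pair of lines are $1$ apart and the shaded pair are $\tfrac{1}{d_a}+\tfrac{1}{d_b}=\sec(\pi/n)$ apart. The only difference is in how the shaded radii are obtained---the paper reads them off from a similar-triangles picture (getting $r=\tan\theta(\csc\theta-1)$ and $R=\tan\theta(\csc\theta+1)$, which simplify to your $d_b/2$ and $d_a/2$), whereas you characterise each shaded circle via the three tangency points and solve a quadratic; this is a cosmetic variation within the same argument.
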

\begin{proof}
Starting with our original $P_+$ as in Figure \ref{fig:PreInv}, we let $S$ be a unit sphere centered at $p$ and invert in $S$.  Focusing on those faces of $P_+$ incident with $p$, we get the diagram in Figure \ref{fig:PreInvPCirc}$(a)$ where the dotted circle is the boundary of $S$.  The two unit circles (diameter 2) bound white faces of $P_+$ and invert to parallel lines one unit apart by the remarks preceding the lemma.  The other circles bound shaded faces and invert to lines which are $\frac{1}{d} + \frac{1}{D}$ apart.  We now calculate the diameters $d$ and $D$.

To calculate the desired diameters, note that the centers of the unit circles are on a regular $n$-gon, and that the faint triangle in Figure \ref{fig:PreInv} can be labeled as in Figure \ref{fig:PreInvPCirc}$(b)$.  Indeed, all the edges labeled $1$ are radii of a unit circle in Figure \ref{fig:PreInv}.  Letting $\theta = \pi/n$, trigonometry labels the sides of the right triangle with right angle at $p$.  The edges labeled $r$ and $R$ are radii of the smaller and larger shaded circles (see Figure \ref{fig:PreInv}).  Using similar triangles one computes that $r=\tan\theta(\csc\theta - 1)$ and $R=\tan\theta(\csc\theta+1)$.  The diameters are twice that, and one computes
\[
\frac{1}{d} + \frac{1}{D} = \frac{1}{2\tan\theta(\csc\theta - 1)} + \frac{1}{2\tan\theta(\csc\theta + 1)} = \sec\theta.
\]
Thus inversion in $S$ sends $p$ to infinity and $P_+$ intersects a horosphere in a rectangle with white sides 1 unit apart and shaded sides $\sec(\pi/n)$ apart (see Figure \ref{fig:PreInvPCirc}$(c)$).

\end{proof}

Lemma \ref{lem:OneTile} determines the shape of $R$ near the cusp $p$.  To get a fundamental rectangle, we need to include $P_-$ near $p$.  We state the result as the following proposition. 


\begin{prop}\label{prop:CuspShape}
The cusp shape of each cusp of $M_{n}$ is $2\cos(\pi/n)i$. 
\end{prop}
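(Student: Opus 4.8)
The plan is to glue the rectangle $R = P_+ \cap H$ from Lemma~\ref{lem:OneTile} to its mirror image $R^- = P_- \cap H$ along the cusp cross-section to get a fundamental domain for the cusp torus, then read off the meridian and longitude. First I would recall from Theorem~\ref{thm:FALstructure} and the decomposition described above that the cusp corresponding to a crossing circle $c_i$ is tiled by exactly two rectangles, one from $P_+$ and one from $P_-$, which are isometric (indeed $P_-$ is the reflection of $P_+$ across a white face). By Lemma~\ref{lem:OneTile}, each such rectangle has one pair of sides (the ``white'' sides, coming from white faces of the polyhedron) of length $1$ and the other pair (the ``shaded'' sides, coming from the crossing disk) of length $\sec(\pi/n)$, after the normalization that sends $p$ to infinity via inversion in a unit sphere.

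Next I would identify which sides of the rectangle get glued to $R^-$ and which get glued within $R$ itself, using the face-pairing description of the polyhedral decomposition. The two white faces incident to $p$ in $P_+$ are each glued to the corresponding white faces in $P_-$, so the two white sides of $R$ are identified with the two white sides of $R^-$; stacking $R$ and $R^-$ along a shared white side produces a $2 \times \sec(\pi/n)$ rectangle (width $2$ in the white direction). The two shaded faces incident to $p$ come from the two halves of the crossing disk for $c_i$, and for an untwisted crossing circle these are glued to each other across their common ideal vertex (as recalled in Section~\ref{subsec:GP}); tracking this gluing shows the two shaded sides of the doubled rectangle are identified to each other by a translation, and similarly the two white-direction sides are identified. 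So the cusp torus is obtained from the $2 \times \sec(\pi/n)$ Euclidean rectangle by identifying opposite sides, and the cusp shape is the ratio of the side lengths. Scaling so the shaded direction has length $1$, the white direction has length $2\cos(\pi/n)$, giving cusp shape $2\cos(\pi/n)\, i$ (the factor of $i$ because the two directions are orthogonal, the rectangle being right-angled since all dihedral angles of $P_\pm$ are right angles by Theorem~\ref{thm:FALstructure}).

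The step I expect to be the main obstacle is pinning down \emph{exactly} how the sides of the two rectangles are identified, i.e.\ making sure the doubling happens across a white side (not a shaded side) and that the resulting identifications on the doubled rectangle are by translations rather than, say, a glide reflection that would change the similarity class or force a smaller fundamental domain. This requires carefully reading off the face-pairing of $P_\pm$ near the ideal vertex $p$ from the checkerboard picture (Figure~\ref{fig:polydecomp}) and from the untwisted-gluing convention, and checking that meridian and longitude correspond to the two rectangle directions as claimed. Once the combinatorics of the gluing is settled, the geometry is immediate from Lemma~\ref{lem:OneTile}: the cusp shape is $2\cos(\pi/n)i$. I would also double-check consistency with the general FAL cusp-shape formulas in Lemma~2.6 of \cite{FP2007}, which should give the same answer and serve as a sanity check.
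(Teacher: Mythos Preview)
Your approach is exactly the paper's: double the tile $R$ of Lemma~\ref{lem:OneTile} across a white side to obtain a $2$-by-$\sec(\pi/n)$ fundamental region for the cusp torus, then normalize. Two small points to fix. First, you have the side labels swapped: in Lemma~\ref{lem:OneTile} the white sides are $1$ unit \emph{apart}, so it is the shaded sides that have length $1$ and the white sides that have length $\sec(\pi/n)$; with the correct labels your stacking becomes internally consistent (gluing across a white side of length $\sec\theta$ doubles the shaded direction from $1$ to $2$, giving the $2\times\sec\theta$ rectangle you want). Second, your argument only treats crossing-circle cusps, but $\mathcal{P}_n$ has $n$ knot-circle components as well, and the proposition asserts the shape of \emph{each} cusp. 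The paper dispatches this in one sentence (just before Lemma~\ref{lem:OneTile}) by invoking the symmetry of the diagram in Figure~\ref{fig:FAl3Pretz2} that takes any component to any other; you should add that observation at the start.
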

\begin{proof}
As mentioned earlier, there is a symmetry of $M_n$ between any two components, so any two cusps will be isometric, for a particular choice of cusp expansion. This implies that all cusp shapes are isometric.  Thus we calculate the shape of the cusp $p$.  In Lemma \ref{lem:OneTile} we showed that $P_+$ near $p$ is a rectangle $R$ with shaded sides length 1 and white sides length $\sec\theta$.  Since $p$ is a crossing circle cusp, the shaded sides of $R$ are identified, while the white are not.  To get a fundamental region for the cusp we note that $P_-$ is the reflection of $P_+$ across any white side.  Thus a fundamental region is two tiles identical to that of Lemma \ref{lem:OneTile} glued along an white face, resulting in a $2$-by-$\sec\theta$ rectangle.  Rescaling to have unit meridian gives Figure \ref{fig:PnCuspShapes}$(a)$.  
\end{proof}

\begin{figure}[h]
\[
\begin{array}{cc}
\includegraphics[width=1.25in]{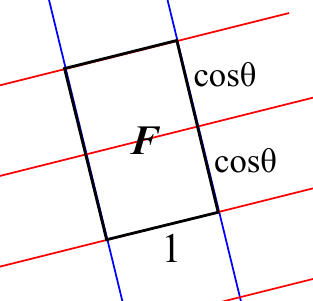} &\includegraphics[width=1.25in]{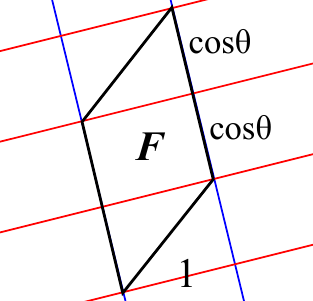}
\\
(a) \textrm{ Untwisted fundamental region $F$}& \hspace{0.5pc}(b) \textrm{ Twisted fundamental region $F$}
\end{array}
\]
\caption{Untwisted and twisted cusp shapes}
\label{fig:PnCuspShapes}
\end{figure}

\noindent\textbf{Remark:} We point out that since $p$ is a crossing circle, the meridian lies in the reflection surface and on the white sides of the fundamental region (labeled $\sec\theta$ in Figure \ref{fig:PreInvPCirc}$(c)$).  The longitude lies in the shaded faces.  If one normalizes the meridian to length 1, the corresponding longitude has length $2\cos(\pi/n)$.

Before moving on, we pause to consider the cusp shape of a half-twist partner of $M_n$.  The shape itself depends on whether the half-twist is positive or negative, but as the resulting manifolds are homeomorphic (they differ by a full-twist on the corresponding crossing disk) the distinction does not affect the invariants under consideration.

\begin{lemma}
	\label{lem:twistedtiles}
Let $C$ be a twisted crossing circle cusp in a half-twisted partner of $M_n$.  The cusp shape of $C$ is $\dfrac{2\cos(\pi/n)i}{1\pm \cos(\pi/n)i} $.
\end{lemma}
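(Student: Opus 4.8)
The plan is to redo the cusp computation of Proposition~\ref{prop:CuspShape} changing only the ingredient that distinguishes a twisted crossing circle from an untwisted one, namely the identification of the shaded faces. A twisted crossing circle cusp is glued up from exactly the same two rectangular tiles produced in Lemma~\ref{lem:OneTile} — the polyhedra $P_\pm$, and hence their horospherical cross sections, are unchanged — but now a shaded face of $P_+$ is glued to a shaded face of $P_-$ rather than to the shaded face it meets in $P_+$ (and symmetrically for $P_-$); see the discussion in Subsection~\ref{subsec:GP} and \cite[Figure~3]{P2011}. So the whole argument is: identify the modified gluing, re-read off the cusp lattice, and normalize.

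Concretely, I would keep the coordinates of Proposition~\ref{prop:CuspShape}: after sending the cusp to infinity, the $P_+$ tile is the rectangle $R=[0,1]\times[0,\sec\theta]$ with its two white edges (which carry the meridian) vertical and its two shaded edges horizontal, and $\bar R=[1,2]\times[0,\sec\theta]$ is the reflection of $R$ across the shared white edge, so $R\cup\bar R=[0,2]\times[0,\sec\theta]$. In the untwisted manifold the outer white edges are identified by the horizontal translation $\ell=(2,0)$ (the longitude) and each tile has its top shaded edge glued to its bottom shaded edge by the vertical translation $m=(0,\sec\theta)$ (the meridian), recovering the lattice $\langle \ell, m\rangle$ and the shape $\ell/m=2\cos(\pi/n)\,i$. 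In the twisted manifold the white identification is unchanged, so $\ell$ still lies in the cusp lattice; but the top shaded edge of $R$ is now glued to a shaded edge of the reflected tile $\bar R$, which sits one tile-width — that is, half a longitude — farther along the cusp. Reading this identification off Figure~\ref{fig:polydecomp} and using $\bar R=\rho(R)$, the new shaded gluing is the translation $m\pm\frac{1}{2}\ell$ (the sign recording whether the half-twist is positive or negative), its companion shaded gluing is the $\rho$-conjugate of this one, and together with the white gluing the three face-pairings generate the lattice $\Lambda'=\langle \ell,\; m\pm\frac{1}{2}\ell\rangle$. A brief check that $\mathbb{R}^2/\Lambda'$ is an honest flat torus (which forces the shaded gluings to be translations rather than rotations, and shows $\ell$ together with $m\pm\frac{1}{2}\ell$ form a basis) confirms this is the correct cusp lattice.

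To finish, normalize the new meridian $m\pm\frac{1}{2}\ell$ to length one: the longitude then becomes $\dfrac{\ell}{\,m\pm\frac{1}{2}\ell\,}=\dfrac{\ell/m}{1\pm\frac{1}{2}(\ell/m)}$, and substituting the untwisted cusp shape $\ell/m=2\cos(\pi/n)\,i$ from Proposition~\ref{prop:CuspShape} gives cusp shape $\dfrac{2\cos(\pi/n)\,i}{1\pm\cos(\pi/n)\,i}$, as claimed (the remaining sign/orientation ambiguity is absorbed into the $\pm$, consistent with the remark preceding the statement that the positive and negative half-twists yield homeomorphic manifolds). The main obstacle is the middle step: correctly identifying which shaded face of $P_+$ is glued to which shaded face of $P_-$ under the half-twist and verifying that this pairing shifts the cusp by exactly half of the longitude — this amounts to a careful reading of the modified gluing in \cite{P2011}, made transparent by the observation that the two congruent tiles $R\subset P_+$ and $\bar R\subset P_-$ are separated by half a longitude along the cusp torus.
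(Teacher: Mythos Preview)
Your proposal is correct and follows essentially the same approach as the paper's proof: both note that passing from an untwisted to a twisted crossing circle only changes the shaded-face gluing, so that the meridian is shifted by one tile-width (equivalently, half the longitude) in the ``shaded'' direction, and then read off the cusp shape as the ratio $\dfrac{2\cos(\pi/n)i}{1\pm\cos(\pi/n)i}$. The paper's version is simply terser---it cites \cite[Figure~7]{FP2007} for the one-tile shift and rescales so that the distance between shaded faces is $1$, giving longitude $2\cos(\pi/n)i$ and meridian $1\pm\cos(\pi/n)i$ directly---whereas you carry out the same computation in explicit coordinates on $R\cup\bar R$.
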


\begin{proof}
Changing from an untwisted to a twisted crossing circle alters the gluing pattern on the shaded faces.  For untwisted crossing circles, as in Proposition \ref{prop:CuspShape}, shaded faces are glued straight across, while for twisted crossing circles the gluing map shifts a single tile in the shaded direction (see \cite[Figure 7]{FP2007}).  Thus, scaling so that the distance between shaded faces is one, a fundamental parallelogram has longitude $2\cos(\pi/n)i$ and meridian $1\pm\cos(\pi/n)i$ (see Figure \ref{fig:PnCuspShapes}$(b)$ for the $1 + \cos(\pi/n)i$ case).  The result follows.
\end{proof}


\section{Hyperbolic reflection orbifolds}
\label{sec:HRO}




In this section, we establish a strong commensurability relation for our pretzel FAL complements - we show that any $M_{n}$, along with all of its half-twist partners, is commensurable with the \textit{hyperbolic reflection orbifold} associated with $P$ (a single copy of $P_{\pm}$). This is the orbifold $\mathcal{O}_{P} = \mathbb{H}^{3} / \Gamma_{P}$, where $\Gamma_{P}$ is generated by reflections in the faces of $P$. This commensurability relation will be used to help determine arithmeticity and hidden symmetries of half-twist partners in Section \ref{sec:Arith} and Section \ref{sec:HTPwithHS}, respectively.

While it perhaps seems probable that any FAL complement will be commensurable with its associated hyperbolic reflection orbifold, this is not always the case. For instance, Chesebro--DeBlois--Wilton in Section 7.2 of \cite{CDW2012} describe an infinite family of FALs that are not commensurable with any hyperbolic reflection orbifold. Fortunately, in the same paper, a  criterion is established to guarantee that  FAL complements with a certain combinatorial symmetry will be commensurable with their associated hyperbolic reflection orbifolds. This criterion is stated in terms of the crushtacean of Definition \ref{def:Crushtacean}. The following lemma is a rewording of \cite[Lemma 7.4]{CDW2012}; the commensurability conclusion stated in our version is noted in the paragraph following the proof of Lemma 7.4 in \cite{CDW2012}.

\begin{lemma}\cite[Lemma 7.4]{CDW2012}
	\label{lem:crush}
	Let $\mathcal{F}$ be a FAL with crushtacean $C_{\mathcal{F}}$. Suppose $C_{\mathcal{F}}$ has the property that for each crossing circle component $c_{i}$ of $\mathcal{F}$, corresponding to an edge $e_{i}$ of $C_{\mathcal{F}}$ with vertices $v_{i}$ and $v_{i}'$,
	\begin{enumerate}
		\item if $c_{i}$ is untwisted, then there is a reflective involution of $C_{\mathcal{F}}$ preserving $e_{i}$ and exchanging $v_{i}$ and $v_{i}'$.
		\item if $c_{i}$ is twisted, then there is a rotational involution of $C_{\mathcal{F}}$ preserving $e_{i}$ and exchanging $v_{i}$ and $v_{i}'$.
	\end{enumerate}
	Then $M_{\mathcal{F}}$ is commensurable with its associated reflection orbifold.  
\end{lemma}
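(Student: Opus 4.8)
The plan is to recall the argument behind this statement, which is essentially \cite[Lemma 7.4]{CDW2012}: the strategy is to exhibit the holonomy of $M_{\mathcal{F}}$ and the reflection group $\Gamma_P$ of $\mathcal{O}_P=\mathbb{H}^3/\Gamma_P$ as lattices sitting inside one common lattice obtained from $\Gamma_P$ by adjoining finitely many symmetries of $P$. The first step is a reduction to an orbifold statement. Every FAL complement (indeed every polyhedral partner) carries the reflection in the projection plane as an isometric involution $\rho$, because the gluing pattern relating $P_+$ and $P_-$ is manifestly equivariant under the reflection exchanging $P_\pm$ (cf.\ Definition \ref{defn:GeomPart}); the quotient $\mathcal{Q}=M_{\mathcal{F}}/\langle\rho\rangle$ is then a finite-volume hyperbolic orbifold whose underlying space is a single copy of $P$, with the white faces turned into mirrors and the shaded faces glued in pairs exactly as in $M_{\mathcal{F}}$ (an order-two cone edge appearing along the shared edge of each twisted pair). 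Since $M_{\mathcal{F}}$ is a degree-two orbifold cover of $\mathcal{Q}$, it suffices to prove $\mathcal{Q}$ is commensurable with $\mathcal{O}_P$.

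The next step is to geometrize the combinatorial hypotheses. The crushtacean $C_{\mathcal{F}}$ (Definition \ref{def:Crushtacean}) records the combinatorics of the circle packing of $\mathbb{S}^2$ cut out by the faces of $P$, and that packing is rigid (Andreev's theorem); hence every automorphism of $C_{\mathcal{F}}$ is induced by an isometry of $\mathbb{H}^3$ preserving $P$, which is orientation-reversing when the automorphism acts on $\mathbb{S}^2$ as a reflection and orientation-preserving when it acts as a rotation. Applying this to the involution supplied for the $i$th crossing circle $c_i$ --- which fixes the edge $e_i$ and exchanges its endpoints $v_i,v_i'$ --- produces an isometry $\sigma_i$ of $\mathbb{H}^3$ that preserves $P$, interchanges the two shaded faces $c_i^a,c_i^b$ coming from $c_i$, and matches up the ideal vertices these faces share. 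Using Theorem \ref{thm:FALstructure} together with the explicit description in \cite{FP2007, P2011} of how the two half crossing disks are peeled apart and reglued, one then checks that $\sigma_i$ restricts, on $c_i^a$, to the FAL gluing map $c_i^a\to c_i^b$ post-composed with the reflection of $\mathbb{H}^3$ in the plane of $c_i^b$, and that $\sigma_i$ is orientation-reversing exactly when $c_i$ is untwisted --- which is precisely the parity needed to make the face-pairing of $\mathcal{Q}$ across $c_i^a$ orientation-preserving in the untwisted case and orientation-reversing (around the cone edge) in the twisted case. This verification --- that $\sigma_i$ realizes the honest gluing map up to a shaded reflection, rather than merely some isometry pairing the correct two faces --- is the step I expect to be the main obstacle; it is exactly what the hypothesis ``preserving $e_i$ and exchanging $v_i$ and $v_i'$'' is designed to provide, since a gluing isometry between ideal triangles is pinned down by its action on the ideal vertices, and the twisted/untwisted versus rotational/reflective dichotomy is the orientation bookkeeping that makes the pieces fit.

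The final step is to assemble the groups. Writing $\mathcal{Q}=\mathbb{H}^3/\Gamma_{\mathcal{Q}}$, the group $\Gamma_{\mathcal{Q}}$ is generated by the reflections in the white faces of $P$ --- which already lie in $\Gamma_P$ --- together with the face-pairing isometries across the shaded faces, and by the previous step each of the latter equals $r\sigma_i$ for some reflection $r\in\Gamma_P$ in a shaded face of $P$ and some $\sigma_i$ as above. Hence $\Gamma_{\mathcal{Q}}\subseteq\widehat\Gamma:=\langle\Gamma_P,\sigma_1,\ldots,\sigma_n\rangle$. Each $\sigma_i$ is a symmetry of $P$, so it permutes the faces of $P$ and therefore conjugates the generating reflections of $\Gamma_P$ among themselves; thus $\Gamma_P\trianglelefteq\widehat\Gamma$ with $[\widehat\Gamma:\Gamma_P]\le|\mathrm{Sym}(P)|<\infty$. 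So $\widehat\Gamma$ is a lattice commensurable with $\Gamma_P$, and $\Gamma_{\mathcal{Q}}$, being a lattice contained in $\widehat\Gamma$, has finite index there as well; hence $\Gamma_{\mathcal{Q}}$, $\widehat\Gamma$ and $\Gamma_P$ are pairwise commensurable. Pulling this back through the degree-two cover $M_{\mathcal{F}}\to\mathcal{Q}$ shows $M_{\mathcal{F}}$ is commensurable with $\mathcal{O}_P$, as required.
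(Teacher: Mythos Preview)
The paper does not give its own proof of this lemma: it is stated as a rewording of \cite[Lemma~7.4]{CDW2012}, with the commensurability conclusion attributed to the paragraph following that proof in \cite{CDW2012}. So there is no in-paper argument to compare against.

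That said, your sketch is a faithful reconstruction of the Chesebro--DeBlois--Wilton strategy. The reduction to the reflection quotient $\mathcal{Q}$, the use of circle-packing rigidity to promote crushtacean automorphisms to isometries $\sigma_i$ of $P$, and the containment $\Gamma_{\mathcal{Q}}\subseteq\langle\Gamma_P,\sigma_1,\dots,\sigma_n\rangle$ with $\Gamma_P$ normal of finite index are exactly the ingredients of their proof. The one place to be careful---which you correctly flag---is the verification that $\sigma_i$ realizes the actual shaded-face identification (up to the shaded reflection) rather than just \emph{some} isometry pairing the two triangles; this is where the reflective/rotational dichotomy in the hypothesis does real work, matching the untwisted/twisted gluing rules described in \cite{FP2007,P2011}. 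Your orientation bookkeeping is the right mechanism for this, though in a fully written proof one would want to spell out the vertex-by-vertex check on the ideal triangles.
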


\begin{prop}
	\label{prop:RefOrbComm}
	Suppose $M \in HTP(\mathcal{P}_{n})$. Then $M$ is commensurable with the reflection orbifold associated to $M_{n}$. In particular, if $M, M' \in HTP(\mathcal{P}_{n})$, then $M$ and $M'$ are commensurable. 
\end{prop}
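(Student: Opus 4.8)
The plan is to apply Lemma~\ref{lem:crush}. The first point is that every $M\in HTP(\mathcal{P}_n)$ is built from the same pair of polyhedra $P_\pm$ as $M_n$, with only the gluings of the shaded faces altered; hence its crushtacean is $C_{\mathcal{P}_n}$ (Figure~\ref{fig:Crushtacean}) no matter which crossing circles are twisted, and only the \emph{hypothesis} of Lemma~\ref{lem:crush} — the existence of the appropriate involution at each crossing-circle edge — depends on the twisting. So it suffices to exhibit, for every crossing-circle edge $c_i$ of $C_{\mathcal{P}_n}$ with endpoints $v_i,v_i'$, \emph{both} a reflective involution \emph{and} a rotational involution of $C_{\mathcal{P}_n}$ that preserve $c_i$ and interchange $v_i$ and $v_i'$. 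Whichever type Lemma~\ref{lem:crush} requires at $c_i$ — reflective when $c_i$ is untwisted in $M$, rotational when it is twisted — is then at hand, and the lemma gives that $M$ is commensurable with $\mathcal{O}_{P}$.

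First I would identify $C_{\mathcal{P}_n}$ from the circle packing of Figure~\ref{fig:PretzelCirclePacking}. The white circles form a Steiner chain of $n$ beads $W_1,\dots,W_n$ between the two concentric circles $W_{n+1}$ (outer) and $W_{n+2}$ (inner); the shaded circles sit one per interstice, giving a vertex $a_i$ in the interstice of $W_i,W_{i+1},W_{n+1}$ and a vertex $b_i$ in the interstice of $W_i,W_{i+1},W_{n+2}$. Reading off shared tangency points, $a_i$ is adjacent to $a_{i\pm1}$ and to $b_i$, and likewise for $b_i$; thus $C_{\mathcal{P}_n}$ is the $n$-gonal prism $C_n\,\square\,K_2$, with the crossing-circle edge $c_i$ being the rung $\{a_i,b_i\}$, located at the bead--bead tangency point $\tau_i=W_i\cap W_{i+1}$ (which lies on the circle midway between $W_{n+1}$ and $W_{n+2}$).

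Next I would produce the two involutions as symmetries of the packing on $\partial\mathbb{H}^3=S^2$, so that ``reflective/rotational'' carries its intended meaning (orientation-reversing versus orientation-preserving, i.e.\ a reflection versus a $\pi$-rotation of the planar crushtacean). For the reflective one, take $\sigma$ to be inversion in the circle concentric with $W_{n+1},W_{n+2}$ whose radius is the geometric mean of theirs: it interchanges $W_{n+1}\leftrightarrow W_{n+2}$, fixes each bead, hence acts on $C_{\mathcal{P}_n}$ by $a_i\leftrightarrow b_i$, and its fixed circle passes through every $\tau_i$ — so it is a reflection of the crushtacean fixing each rung $c_i$ and swapping its endpoints. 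For the rotational one, fix $i$ and set $\mu_i=\sigma\circ\rho_i$, where $\rho_i$ is the planar reflection across the line through $\tau_i$ and the common center; then $\rho_i$ fixes $W_{n+1},W_{n+2}$ and reflects the necklace about $\tau_i$, while $\sigma$ swaps $W_{n+1}\leftrightarrow W_{n+2}$, so $\mu_i$ is an involution exchanging $a_i$ and $b_i$. Being a composition of two reflections, $\mu_i$ is orientation-preserving on $S^2$ and fixes $\tau_i$; thus it is a rotational involution preserving $c_i$ and interchanging its endpoints. That $\sigma$ and $\rho_i$ (hence $\mu_i$) carry packing circles to packing circles with the right incidences is immediate from the Steiner-chain description. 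With both involutions in hand at every $c_i$, Lemma~\ref{lem:crush} applies to each $M\in HTP(\mathcal{P}_n)$ and gives commensurability with $\mathcal{O}_{P}$. The final clause is then formal: if $M,M'\in HTP(\mathcal{P}_n)$, both are commensurable with $\mathcal{O}_{P}$, hence with one another, since commensurability is transitive.

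The step I expect to be the main obstacle is the orientation bookkeeping — verifying that $\sigma$ is genuinely reflective and $\mu_i$ genuinely rotational (not the reverse), and confirming that $\mu_i$ really is a symmetry of this particular packing that swaps the two concentric circles while reflecting the necklace about $\tau_i$. The identification of $C_{\mathcal{P}_n}$ with the prism and the transitivity argument for the last clause are routine.
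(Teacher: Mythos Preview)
Your proof is correct and follows essentially the same approach as the paper: verify the hypotheses of Lemma~\ref{lem:crush} by exhibiting, at every crossing-circle edge of $C_{\mathcal{P}_n}$, both a reflective and a rotational involution swapping its endpoints, so that the lemma applies regardless of which crossing circles are twisted. The only difference is presentational: the paper works directly with the ladder picture of the crushtacean (Figure~\ref{fig:Crushtacean}) and simply points to the horizontal line through the rung midpoints --- reflection across it and a $180^\circ$ rotation about it give the two involutions --- whereas you rebuild the crushtacean from the Steiner-chain circle packing and realize the involutions as explicit M\"obius maps $\sigma$ and $\mu_i=\sigma\circ\rho_i$; the content is the same.
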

\begin{proof}
The crushtacean for $P_{n}$ is given in Figure \ref{fig:Crushtacean}, where all of the edges coming from crossing circles are labeled $c_{1}, \ldots, c_{n}$. Note that, any half-twist partner of $P_{n}$ also has the same crushtacean with the same edge colorings. Consider the horizontal line intersecting all of the green edges of our crushtacean in their respective midpoints. Both reflecting across this line and rotating $180^{\circ}$ across this line provide involutions of our crushtacean that preserve any crossing circle edge $e_{i}$ while exchanging it's respective vertices. Thus, any $M \in HTP(\mathcal{P}_{n})$ satisfies the criteria of Lemma \ref{lem:crush}, and so, is commensurable with the reflection orbifold associated to $M_{n}$. The second statement follows from the fact that commensurability is an equivalence relation.   	
\end{proof}


\section{Cusp and trace fields}\label{section:tracefields}

The goal of this section is to prove Theorem \ref{mainthmIVT}, which is a key component for the proof of Theorem \ref{mainthm1}.  A complete hyperbolic 3-manifold $M$ of finite volume can be identified with the quotient $\mathbb{H}^3/\Gamma$ where $\Gamma\subset \PSL_2(\C)$ is a lattice.  The \textit{trace field} of $M$, denoted $\Q(\tr\Gamma)$, is the field generated by the traces of the elements of $\Gamma$.  Similarly, the \textit{invariant trace field} of $M$, denoted $kM$, is the field generated by the traces of the products of squares of the elements of $\Gamma$.  When $M$ is cusped, its \textit{cusp field},  denoted $cM$, is the field generated by the cusp shapes of $M$.   These fields are finite extensions of $\Q$ and satisfy the inclusions $cM\subset kM\subset \Q(\tr\Gamma)$ \cite{NR1990}.  In general, these fields are distinct, however in the case of FALs, by  \cite[Theorem 6.1.6]{F2017} and  \cite[Cor. 4.2.2]{MR}, these fields coincide, i.e. 
$cM=kM=\Q(\tr(\Gamma)).$  
This enables us to determine the invariant trace field of each $M_n$ and its half-twist partners.


\begin{prop}
	\label{prop:invtracfields}
	 If $M\in HTP(\mathcal{P}_n)$, $n \geq 3$, then $kM=\Q(\cos(\pi/n)i)$. 
\end{prop}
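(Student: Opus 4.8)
The plan is to reduce the claim to a cusp-shape computation. First I would invoke the coincidence $kM=cM=\Q(\tr\Gamma)$ quoted above, which is valid for every FAL, and note that each $M\in HTP(\mathcal P_n)$ is itself a FAL; so it suffices to identify the cusp field $cM$, the field generated over $\Q$ by the shapes of all cusps of $M$. For the inclusion $\Q(\cos(\pi/n)i)\subseteq cM$ I would use the crossing-circle cusps directly: by Proposition~\ref{prop:CuspShape} an untwisted crossing-circle cusp has shape $2\cos(\pi/n)i$, and by Lemma~\ref{lem:twistedtiles} a twisted one has shape $\tfrac{2\cos(\pi/n)i}{1\pm\cos(\pi/n)i}$; since $\alpha=\cos(\pi/n)i$ is recovered from $\tfrac{2\alpha}{1\pm\alpha}$ by a rational expression, each of these numbers generates exactly $\Q(\cos(\pi/n)i)$, giving the inclusion.

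For the reverse inclusion the plan is to reduce to $\mathcal P_n$ itself via commensurability. Taking all twisting parameters zero shows $\mathcal P_n\in HTP(\mathcal P_n)$, so Proposition~\ref{prop:RefOrbComm} makes both $M$ and $M_n$ commensurable with the reflection orbifold associated to $M_n$, hence commensurable with one another. The invariant trace field is a commensurability invariant (Maclachlan--Reid), so $kM=kM_n$, and it is enough to compute $kM_n=cM_n$. Here I would use the symmetry of $M_n$ carrying any link component to any other (the rotation of the symmetric chain diagram discussed in Section~\ref{subsec:CA}): it makes all $2n$ cusps of $M_n$ isometric, so $cM_n$ is generated by a single cusp shape, which by Proposition~\ref{prop:CuspShape} is $2\cos(\pi/n)i$. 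Thus $cM_n=\Q(2\cos(\pi/n)i)=\Q(\cos(\pi/n)i)$, and chaining $kM=kM_n=cM_n$ completes the argument.

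Once the commensurability reduction is in place there is no hard step; the two things one must not skip are the commensurability invariance of the invariant trace field and the fact that the symmetry group of $M_n$ is transitive on all $2n$ cusps, not merely on the crossing-circle cusps. The approach that would genuinely require effort is the ``direct'' one inside a half-twist partner $M\neq M_n$: there the knot-circle cusps need not be isometric to the crossing-circle cusps, and twisting shears the knot-circle cusp shapes, so one would be forced to compute those shapes from the circle packing of $P_+$ in the manner of Lemma~\ref{lem:OneTile} and check that they contribute nothing new to the field. That computation is exactly what the commensurability step is designed to bypass, so I would organize the proof around it rather than around a head-on cusp-by-cusp analysis.
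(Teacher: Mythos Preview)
Your proposal is correct and follows essentially the same route as the paper: compute $cM_n=\Q(\cos(\pi/n)i)$ from Proposition~\ref{prop:CuspShape}, invoke Flint's result $cM_n=kM_n$, and then transport this to every half-twist partner via Proposition~\ref{prop:RefOrbComm} and the commensurability invariance of the invariant trace field. Your first paragraph (the direct inclusion $\Q(\cos(\pi/n)i)\subseteq cM$ via Lemma~\ref{lem:twistedtiles}) is harmless but redundant, since the commensurability step already yields the full equality $kM=kM_n$ rather than merely one containment.
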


\begin{proof}
 By Proposition \ref{prop:CuspShape}, the cusp shapes of $M_n$ are all $2\cos(\pi/n)i$, and hence, $cM_n=\Q(\cos(\pi/n)i)$.
Flint \cite[Theorem 6.1.6]{F2017} shows $cM_n=kM_n$.  Since the invariant trace field is a commensurability invariant and half-twist partners are commensurable by Proposition \ref{prop:RefOrbComm}, it follows that $kM=\Q(\cos(\pi/n)i)$.
\end{proof}

Based on Proposition \ref{prop:invtracfields}, it's natural to ask if this result extends to polyhedral partners that are not half-twist partners. While half-twist partners always share the same invariant trace field (both by Flint's work \cite{F2017} and Theorem 5.6.1 from \cite{MR}), it does not seem immediately obvious that the techniques used in these papers would extend to polyhedral partners. 
This all motivates the following question. 

\begin{ques} How much can cusp and trace fields vary among polyhedral partners who are not half-twist partners?\end{ques}

Even though we have identified distinct primitive elements, $\cos(\pi/n)i$,  in each $kM_n$, the fields they generate may be isomorphic.  To differentiate them, we compute the degree of the field extension $[kM_n:\Q]$.

\begin{lem}\label{lem:degreephi}
For each $n\ge 3$, $[kM_n:\Q]=\phi(n)$, the Euler totient function.
\end{lem}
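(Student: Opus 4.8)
The plan is to show that $kM_n = \Q(\cos(\pi/n)i)$ is an imaginary quadratic extension of the totally real field $\Q(\cos(\pi/n))$, and to compute its degree by relating it to the cyclotomic field $\Q(\zeta_n)$ where $\zeta_n = e^{\pi i/n}$ is a primitive $2n$-th root of unity. First I would observe that $2\cos(\pi/n) = \zeta_{2n} + \zeta_{2n}^{-1}$, so $\Q(\cos(\pi/n))$ is the maximal real subfield $\Q(\zeta_{2n})^{+}$ of $\Q(\zeta_{2n})$, which has degree $\phi(2n)/2$ over $\Q$. Then $\cos(\pi/n)i$ is a square root of the negative real number $-\cos^2(\pi/n)$, so $\Q(\cos(\pi/n)i)$ is either equal to $\Q(\cos(\pi/n))$ or a degree-$2$ extension of it; since $\cos(\pi/n)i$ is not real (for $n \geq 3$, $\cos(\pi/n) \neq 0$), it is a genuine quadratic extension, giving $[kM_n : \Q] = \phi(2n)$.

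The remaining step is the elementary number-theoretic identity $\phi(2n) = \phi(n)$ when... wait — this only holds when $n$ is odd. Here I need to be more careful: in fact $\phi(2n) = \phi(n)$ for $n$ odd and $\phi(2n) = 2\phi(n)$ for $n$ even, so the claim $[kM_n:\Q] = \phi(n)$ as literally stated would require a different normalization. I would reconcile this by noting that $\cos(\pi/n)i$ can be rewritten: since $i = \zeta_4$ and $\cos(\pi/n) = (\zeta_{2n}+\zeta_{2n}^{-1})/2$, the element $\cos(\pi/n)i$ lies in $\Q(\zeta_{\mathrm{lcm}(4,2n)})$, and a direct Galois-theoretic computation of the fixed field of the stabilizer of $\cos(\pi/n)i$ inside $\Gal(\Q(\zeta_{\mathrm{lcm}(4,2n)})/\Q)$ pins down the degree exactly. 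I expect the cleanest route is: writing $\omega = \cos(\pi/n)i$, compute $\omega^2 = -\cos^2(\pi/n) = -\tfrac14(\zeta_{2n}+\zeta_{2n}^{-1})^2 = -\tfrac14(\zeta_n + 2 + \zeta_n^{-1})$, so $\Q(\omega^2) = \Q(\zeta_n + \zeta_n^{-1}) = \Q(\zeta_n)^{+}$, of degree $\phi(n)/2$ over $\Q$; then $[\Q(\omega):\Q(\omega^2)] = 2$ because $\omega$ is non-real while $\Q(\omega^2)$ is real. This yields $[kM_n : \Q] = \phi(n)$ exactly.

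The main obstacle is the second step — verifying that $\Q(\omega)$ is strictly larger than $\Q(\omega^2)$ and genuinely of degree $2$, i.e., that $\omega \notin \Q(\omega^2)$. This follows immediately once one knows $\Q(\omega^2)$ is totally real (a standard fact about maximal real subfields of cyclotomic fields) and $\omega$ is purely imaginary and nonzero, hence not real. So the argument reduces to: (1) the algebraic identity $\omega^2 = -\tfrac14(\zeta_n + 2 + \zeta_n^{-1})$; (2) the identification $\Q(\zeta_n + \zeta_n^{-1}) = \Q(\zeta_n)^+$ with $[\Q(\zeta_n)^+ : \Q] = \phi(n)/2$ (valid for $n \geq 3$); (3) the degree-$2$ step from realness. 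I would present these three points in sequence, citing a standard reference such as Washington's \emph{Introduction to Cyclotomic Fields} for (2), and conclude $[kM_n : \Q] = 2 \cdot \phi(n)/2 = \phi(n)$.
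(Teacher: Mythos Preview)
Your final argument is correct and essentially identical to the paper's: both compute $\omega^2 = (\cos(\pi/n)i)^2 = -\cos^2(\pi/n)$, identify $\Q(\omega^2)$ with the maximal real subfield $\Q(\cos(2\pi/n)) = \Q(\zeta_n+\zeta_n^{-1})$ of degree $\phi(n)/2$, and then observe that $\omega$ is non-real so the extension $\Q(\omega)/\Q(\omega^2)$ is genuinely quadratic. The paper cites Lehmer for the degree $[\Q(\cos(2\pi/n)):\Q]=\phi(n)/2$ where you invoke the standard cyclotomic fact, and it omits your initial detour through $\phi(2n)$, but the substance is the same.
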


\begin{proof}
Since $(\cos(\pi/n)i)^2=-\cos^2(\pi/n)=-\frac{1}{2}(\cos(2\pi/n)+1)$, $kM_n$ is a quadratic extension of $\Q(\cos(2\pi/n))$.  Lehmer showed that $[\Q(\cos(2\pi/n)):\Q]=\phi(n)/2$  \cite{Leh} \cite{WatZeit1993}.  The result follows.
\end{proof}

In the proof of Lemma \ref{lem:degreephi}, we see that for each $n \geq 3$, $kM_n$ is an imaginary quadratic extension of its totally real subfield $\Q(\cos(2\pi/n))$, or in other words, is a CM-field.  In particular, $kM_n$ has no real places (cf. \cite[Thm. A]{Cal2006}).

\begin{lem}\label{lem:increasingdegree}
	For any $d\in \N$, there exists an $n_d\in \N$ such that for all $n\ge n_d$, $[kM_n:\Q]\ge d$.
\end{lem}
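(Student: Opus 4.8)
The plan is to deduce this from Lemma \ref{lem:degreephi} together with the standard fact that the Euler totient function is unbounded. Concretely, Lemma \ref{lem:degreephi} gives $[kM_n:\Q]=\phi(n)$ for every $n\ge 3$, so it suffices to show that for each $d\in\N$ there is an $n_d$ with $\phi(n)\ge d$ for all $n\ge n_d$. First I would recall the elementary lower bound $\phi(n)\ge\sqrt{n/2}$, valid for all $n\ge 1$ (one way to see this: writing $n=\prod p_i^{a_i}$, each factor satisfies $\phi(p^a)=p^{a-1}(p-1)\ge\sqrt{p^a}$ except for $p=2$, where $\phi(2^a)=2^{a-1}\ge\sqrt{2^a/2}$, and these bounds multiply). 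Then taking $n_d=2d^2$ forces $\phi(n)\ge\sqrt{n/2}\ge d$ for all $n\ge n_d$, which is exactly the claim.

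An even softer route, if one prefers to avoid the explicit estimate, is to argue by contradiction: if $\phi$ were bounded by some $B$, then infinitely many integers $n$ would have $\phi(n)\le B$; but for any prime $p$ we have $\phi(p)=p-1$, and there are infinitely many primes, so $\phi$ takes arbitrarily large values — and more to the point, one checks directly that only finitely many $n$ satisfy $\phi(n)\le B$ (for instance because $\phi(n)\ge\sqrt{n/2}$ again, or because $n/\phi(n)=\prod_{p\mid n}(1-1/p)^{-1}$ is bounded on any set where $\phi(n)$ is bounded, forcing $n$ itself to be bounded). Either way, the set $\{n:\phi(n)<d\}$ is finite, so we may take $n_d$ to be one more than its maximum.

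There is really no main obstacle here: the content is entirely in Lemma \ref{lem:degreephi}, which has already been established, and the remainder is a well-known property of the totient function. The only thing to be careful about is the trivial edge case — ensuring $n_d\ge 3$ so that Lemma \ref{lem:degreephi} applies — which is handled by simply enlarging $n_d$ if necessary. I would therefore present the proof in two or three lines: invoke Lemma \ref{lem:degreephi}, cite or prove the bound $\phi(n)\ge\sqrt{n/2}$, and set $n_d=\max\{3,\lceil 2d^2\rceil\}$.
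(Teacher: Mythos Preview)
Your proposal is correct and takes essentially the same approach as the paper: both reduce to Lemma~\ref{lem:degreephi} and then invoke a lower bound on $\phi(n)$ that tends to infinity. The only difference is cosmetic---the paper cites the sharper bound $\phi(n)>n/(e^\gamma\log\log n+3/\log\log n)$ from \cite{BachShallit}, whereas you use the more elementary $\phi(n)\ge\sqrt{n/2}$, which is entirely sufficient here.
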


\begin{proof}
	This follows from the fact that $\phi(n)$ can be bounded from below by an increasing, unbounded function.  For example, it is known that for $n\ge 3$, $\phi(n)> \dfrac{n}{e^\gamma\log \log n +\frac{3}{\log \log n}}$, where $\gamma$ is Euler's constant \cite[Theorem 8.8.7]{BachShallit}.
\end{proof}

Putting these lemmas together, we get the following finiteness result.

\begin{prop}\label{prop:finitefields}For each $n\ge 3$, there are only finitely many $n_i$ such that $kM_{n_i}\cong kM_n$.\end{prop}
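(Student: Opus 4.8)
The plan is to deduce Proposition \ref{prop:finitefields} directly from the two preceding lemmas. By Lemma \ref{lem:increasingdegree}, the function $n \mapsto [kM_n : \Q]$ is unbounded: for any bound $d$ there is an $n_d$ beyond which $[kM_n:\Q]\ge d$. Fix $n\ge 3$ and set $d = [kM_n:\Q]+1$. Then for every $n_i \ge n_d$ we have $[kM_{n_i}:\Q] \ge d > [kM_n:\Q]$, so $kM_{n_i}$ cannot be isomorphic to $kM_n$ (isomorphic number fields have equal degree over $\Q$). Hence any $n_i$ with $kM_{n_i}\cong kM_n$ must satisfy $n_i < n_d$, and there are only finitely many such $n_i$ (indeed only finitely many with $n_i \ge 3$, and we need only consider $n_i\ge 3$ since $M_{n_i}$ is hyperbolic only then). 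This is the whole argument.

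Alternatively, one could invoke Lemma \ref{lem:degreephi} more explicitly: $kM_{n_i}\cong kM_n$ forces $\phi(n_i) = \phi(n)$, and it is a classical fact that the fibers of Euler's totient function are finite (this is immediate from $\phi(m)\to\infty$, which is exactly what Lemma \ref{lem:increasingdegree} records). Either phrasing works; I would present the first since it uses only what has just been proved and makes the logical dependence transparent.

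There is essentially no obstacle here — the proposition is a bookkeeping corollary of Lemma \ref{lem:increasingdegree}, whose content (unboundedness of $\phi$) is the only nontrivial input and has already been established via the cited lower bound for $\phi(n)$. The one point worth stating carefully is the reduction to degrees: the $kM_{n_i}$ are a priori merely abstract number fields, so "isomorphic" must be converted to "same degree over $\Q$" before the degree growth can be applied; this is standard and needs only a one-line remark.

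\begin{proof}
Fix $n\ge 3$ and let $d=[kM_n:\Q]+1$. By Lemma \ref{lem:increasingdegree} there is an $n_d\in\N$ such that $[kM_m:\Q]\ge d$ for all $m\ge n_d$. If $n_i$ satisfies $kM_{n_i}\cong kM_n$, then $[kM_{n_i}:\Q]=[kM_n:\Q]<d$, so $n_i<n_d$. As $M_{n_i}$ is hyperbolic only for $n_i\ge 3$, there are only finitely many such $n_i$.
\end{proof}
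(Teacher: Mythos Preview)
Your proof is correct and follows essentially the same approach as the paper: both reduce the isomorphism $kM_{n_i}\cong kM_n$ to equality of degrees over $\Q$ and then invoke Lemma \ref{lem:increasingdegree} to conclude that only finitely many $n_i$ can satisfy this constraint. Your write-up is slightly more explicit about choosing $d$ and $n_d$, but the logic is identical.
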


\begin{proof}
Suppose that for some $m\ge 3$, $kM_{m}\cong kM_n$.  Then $[kM_m:\Q]=[kM_n:\Q]$ and by Lemma \ref{lem:increasingdegree}, there are only finitely many $m$ for which this can hold.
\end{proof}

These results, taken together, now prove Theorem \ref{mainthmIVT}.  Meanwhile, Proposition \ref{prop:finitefields} motivates the following question: 

\begin{ques}\label{ques:ndeterminekMn} Does $m\ne n$ imply that $kM_m\not\cong kM_n$?\end{ques}

If $kM_m\cong kM_n$, then it is necessarily the case that $\phi(m)=\phi(n)$.  Since for a fixed $d\in \N$, $\phi(x)=d$ has only finitely many solutions, one strategy is to fix $d$ and then compare the fields $kM_n$ for each $n\in\phi^{-1}(d)$.  
In the low degree cases, direct computations are possible and the answer to Question \ref{ques:ndeterminekMn} is yes.   For example, $\phi(n)=2$ only when $n$ is $3$, $4$, or $6$.  A direct computation here shows that $kM_3=\Q(\sqrt{-1})$, $kM_4=\Q(\sqrt{-2})$, and $kM_6=\Q(\sqrt{-3})$.
More generally, implementing SAGE code, we are able to compute $kM_n$ for each $n\le 150$, and by comparing degrees and discriminants, verify than each are distinct.  


\begin{prop}
If $m,n\in \{3,4,\ldots, 150\}$ and $m\ne n$, then $kM_m\not\cong kM_n$.
\end{prop}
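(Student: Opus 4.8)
The plan is to reduce the isomorphism question for the CM-fields $kM_n=\Q(\cos(\pi/n)i)$ to a finite, fully mechanical comparison. By Lemma~\ref{lem:degreephi} we have $[kM_n:\Q]=\phi(n)$, so if $kM_m\cong kM_n$ then $\phi(m)=\phi(n)$; hence it suffices to group the indices $n\in\{3,4,\ldots,150\}$ by the value of $\phi(n)$ and, within each group, show the corresponding fields are pairwise non-isomorphic. For groups of size one there is nothing to prove, so the real work is confined to the (finitely many) values of $d$ for which $\phi^{-1}(d)\cap\{3,\ldots,150\}$ has at least two elements.

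Within a fixed $\phi$-fiber, the strategy is to distinguish the fields by a computable isomorphism invariant, and the natural choice is the discriminant of $kM_n$ (together with, if needed, the discriminant of the maximal real subfield $\Q(\cos(2\pi/n))$ and the factorization type of small rational primes). Concretely, I would: (i) for each $n\le 150$, obtain a defining polynomial for $kM_n$ — e.g. the minimal polynomial of $\cos(\pi/n)i$, or equivalently build $kM_n$ as the degree-two extension $\Q(\cos(2\pi/n))(\sqrt{-\tfrac12(\cos(2\pi/n)+1)})$ from the known minimal polynomial of $2\cos(2\pi/n)$; (ii) compute the field discriminant $\disc(kM_n)$ in SAGE; (iii) verify that within each $\phi$-fiber these discriminants are all distinct. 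Since isomorphic number fields have equal discriminants, distinct discriminants within a fiber immediately give non-isomorphism, and across different fibers the degrees already differ. This is exactly the ``comparing degrees and discriminants'' computation referenced in the statement, so the proof is essentially: state the reduction to $\phi$-fibers, then cite the SAGE output as a finite verification.

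The main obstacle — really the only one — is the remote possibility that two indices $m\ne n$ in the same $\phi$-fiber yield fields with the \emph{same} discriminant, in which case discriminants alone would not separate them and one would need a finer invariant (the splitting behavior of a small prime, or a direct check that no $\Q$-algebra isomorphism exists between the two defining polynomials, e.g.\ via \texttt{K.is\_isomorphic(L)} in SAGE). In practice, for $n\le 150$ the fibers are small and the discriminants spread out quickly (the real subfield discriminants $\disc(\Q(\cos(2\pi/n)))$ already tend to distinguish most collisions, and the ramification at $2$ handles the CM direction), so I expect the plain discriminant comparison to succeed outright; the SAGE computation both confirms this and, should any coincidence arise, \texttt{is\_isomorphic} provides an unconditional fallback. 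I would close by noting that this finite check is the best currently available evidence toward Question~\ref{ques:ndeterminekMn}, which asks whether the implication holds for all $m\ne n$.
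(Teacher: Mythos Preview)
Your proposal is correct and matches the paper's approach essentially verbatim: the paper likewise computes $kM_n$ for each $n\le 150$ in SAGE and verifies the fields are pairwise non-isomorphic ``by comparing degrees and discriminants,'' i.e.\ exactly the reduction to $\phi$-fibers followed by a discriminant check that you describe. Your added fallback via \texttt{is\_isomorphic} is a reasonable safeguard but is not needed in the stated range.
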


Unfortunately, the problem of understanding the solutions to $\phi(x)=d$ for large $d$ becomes quite complicated (see, for example, \cite{Ford1999}) which suggests that one should look for another strategy.
Should the answer to Question \ref{ques:ndeterminekMn} be yes, then this would supply an alternative proof to Corollary \ref{cor:comm}, below.



\section{Arithmeticity}
\label{sec:Arith}

In this section, we completely classify which pretzel FAL complements (along with their half-twist partners) are arithmetic. In order to do this, we employ two arguments: one argument uses short geodesics to rule out arithmeticity for most pretzel FAL complements and the other argument uses our invariant trace field calculations from Section \ref{section:tracefields} to take care of the remaining cases.

There are very strong restrictions placed on the possible lengths of short geodesics in a cusped arithmetic hyperbolic $3$-manifold, especially arithmetic link complements. Essentially, such manifolds rarely have short geodesics, and if they do, only a specific set of short lengths can be realized. This is highlighted in the following work from Neumann and Reid:

\begin{thm}\cite[Corollary 4.5, Theorem 4.6, Corollary 4.7]{NR1990}
	\label{thm:NeRe}
	Let $M = \mathbb{H}^{3} / \Gamma$ be a hyperbolic link complement. If $M$ is arithmetic and contains a geodesic of length less than $1.9248473002\ldots$, then $\Gamma$ is commensurable with $PSL_{2}O_{d}$ with $d \in \left\lbrace 1, 2, 3, 7, 11, 15, 19 \right\rbrace$ and the length of any such geodesic is one of the values from Table 1 (page 283 of \cite{NR1990}). If $M$ contains a geodesic of length less than $0.862554627$, then $M$ must be non-arithmetic. 
\end{thm}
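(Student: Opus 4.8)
The plan is to convert the hypothesis of a short closed geodesic into a statement about an algebraic integer trace, and then settle a finite lattice-point problem.

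\textbf{Step 1 (reduction to Bianchi groups).} Since $M=\mathbb{H}^3/\Gamma$ is arithmetic, non-compact and of finite volume, its invariant quaternion algebra is $M_2(k)$ for some imaginary quadratic field $k=\Q(\sqrt{-d})$, and $\Gamma$ is commensurable with the Bianchi group $\PSL_2(\mathcal{O}_d)$ (see \cite{MR}). Because $M$ is a link complement, the trace field coincides with the invariant trace field \cite[Cor.\ 4.2.2]{MR}, so $\Q(\tr\Gamma)=k$; and as $\Gamma$ is arithmetic, every trace is an algebraic integer, whence $\tr\gamma\in\mathcal{O}_d$ for all $\gamma\in\Gamma$. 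Finally, a closed geodesic of real length $\ell$ in $M$ is the image of the axis of a loxodromic $\gamma\in\Gamma$ with complex length $\mathcal{L}=\ell+i\theta$, and in $\PSL_2$ one has $\tr\gamma=\pm2\cosh(\mathcal{L}/2)$; loxodromicity forces $\tr\gamma\neq\pm2$.

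\textbf{Step 2 (the length--trace identity).} Put $t=\tr\gamma$ (defined up to sign). From $t=\pm2\cosh(\mathcal{L}/2)$ one reads off $\repart(t)^2=4\cosh^2(\ell/2)\cos^2(\theta/2)$ and $|t|^2=4(\cos^2(\theta/2)+\sinh^2(\ell/2))$; using $|t|^2-\repart(t)^2=(\impart t)^2$ and $\cosh^2=1+\sinh^2$ to eliminate $\theta$, one finds that $s:=\sinh^2(\ell/2)$ is the nonnegative root of $4s^2+(4-|t|^2)s-(\impart t)^2=0$, that is,
\[
\sinh^2(\ell/2)=\frac{(|t|^2-4)+\sqrt{(|t|^2-4)^2+16(\impart t)^2}}{8}.
\]
This quantity depends only on $|t|^2$ and $(\impart t)^2$ (so the sign ambiguity of $\tr\gamma$ is harmless), is strictly increasing in each, and tends to $0$ precisely as $t\to\pm2$. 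In particular $\ell<2\operatorname{arcsinh}(\sqrt5/2)$ if and only if $\sinh^2(\ell/2)<5/4$.

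\textbf{Step 3 (the lattice-point computation).} If $\impart t=0$ then $t\in\mathbb{Z}$ with $|t|\ge3$, so $\sinh^2(\ell/2)=(|t|^2-4)/4\ge5/4$ and hence $\ell\ge2\operatorname{arcsinh}(\sqrt5/2)=1.9248473002\ldots$; consequently a geodesic strictly shorter than this must come from some $t\in\mathcal{O}_d$ with $\impart t\neq0$. For such $t$, a direct manipulation shows $\sinh^2(\ell/2)<5/4$ is equivalent to $5|t|^2+4(\impart t)^2<45$, and since $|t|^2\ge(\impart t)^2$ this forces $(\impart t)^2<5$. As $\impart t$ lies in $\sqrt d\,\mathbb{Z}$ when $d\equiv1,2\pmod4$ and in $\tfrac12\sqrt d\,\mathbb{Z}$ when $d\equiv3\pmod4$, being nonzero with square below $5$ forces $d\in\{1,2\}$ in the former case and $d\in\{3,7,11,15,19\}$ in the latter, giving the list $\{1,2,3,7,11,15,19\}$. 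For each of these seven fields only finitely many $t\in\mathcal{O}_d$ (with $t\ne\pm2$) satisfy $5|t|^2+4(\impart t)^2<45$, so enumerating them and evaluating the formula produces the finite set of admissible lengths (Table~1). Minimizing over all seven $d$ and all admissible $t$, the minimum is attained at $d=3$ with $t=\zeta_6=(1+\sqrt{-3})/2$, a primitive sixth root of unity (loxodromic because non-real), for which $|t|^2=1$ and $(\impart t)^2=\tfrac34$, yielding
\[
\sinh^2(\ell/2)=\frac{-3+\sqrt{21}}{8},\qquad \ell=2\operatorname{arcsinh}\sqrt{\frac{-3+\sqrt{21}}{8}}=0.862554627\ldots.
\]
Hence no arithmetic link complement contains a geodesic shorter than this value, which is the contrapositive of the final assertion.

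\textbf{Anticipated obstacle.} The step requiring the most care is Step~1: one must genuinely use that $M$ is a \emph{link complement}, not merely arithmetic, both to bring the trace field down to the imaginary quadratic invariant trace field and to conclude that the loxodromic carrying our geodesic has trace in $\mathcal{O}_d$ (and not just in the ring of integers of some larger field); the sign ambiguity of traces in $\PSL_2$ also deserves a remark, though it is harmless since only $|t|^2$ and $(\impart t)^2$ enter the length formula. Everything afterwards is elementary: Step~2 is a trigonometric identity, and Step~3 collapses to the single inequality $5|t|^2+4(\impart t)^2<45$, after which the lower bound on $|\impart t|$ available in $\mathcal{O}_d$ does the rest.
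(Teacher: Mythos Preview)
The paper does not prove this statement; Theorem~\ref{thm:NeRe} is quoted directly from Neumann--Reid \cite{NR1990} as a known result and is used without proof as input to Proposition~\ref{prop:NonarithMn}. Your argument is therefore not a comparison target but a reconstruction of the Neumann--Reid proof, and as such it is correct. The reduction to $\tr\gamma\in\mathcal{O}_d$ in Step~1 is sound (arithmeticity gives integral traces, and the link-complement hypothesis, via \cite[Cor.~4.2.2]{MR}, pins the trace field down to the imaginary quadratic invariant trace field); the length--trace identity in Step~2 is verified by the computation $t=2\cosh(\ell/2)\cos(\theta/2)+2i\sinh(\ell/2)\sin(\theta/2)$; and in Step~3 the reduction of $\sinh^2(\ell/2)<5/4$ to $5|t|^2+4(\impart t)^2<45$ and the ensuing bound $(\impart t)^2<5$ are exactly what forces $d\in\{1,2,3,7,11,15,19\}$. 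The minimum over the resulting finite list is indeed attained at $d=3$, $t=\zeta_6$, giving the stated bound $0.862554627\ldots$. One small addendum to your anticipated obstacle: you should also note that the geodesic in $M$ of length $\ell$ corresponds to a \emph{primitive} loxodromic conjugacy class, though this plays no role in the inequalities since non-primitive powers only produce longer geodesics.
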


Here, we will use Theorem \ref{thm:NeRe} to show that most $M_{n} = \mathbb{S}^{3} \setminus \mathcal{P}_{n}$ (and their respective polyhedral partners) are non-arithmetic. 

\begin{prop}
	\label{prop:NonarithMn}
	If $n \geq 7$, then $M_{n}$ and all of its polyhedral partners are non-arithmetic.   
\end{prop}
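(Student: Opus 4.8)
The plan is to exhibit a short geodesic in $M_n$ (and in each polyhedral partner) whose length is strictly less than the Neumann--Reid bound $0.862554627$ from Theorem \ref{thm:NeRe}, which then forces non-arithmeticity. The natural candidate for such a short geodesic is the core geodesic of a crossing circle cusp, or rather, since these are cusps and not Dehn-filled solid tori, a geodesic that arises from the combinatorics of the right-angled polyhedral decomposition. Concretely, each crossing disk is a totally geodesic twice-punctured disk, and its double across the reflection surface (or an appropriate curve running through the shaded faces) gives a closed geodesic whose length is controlled by the cusp-shape computation in Section \ref{subsec:CA}. The key quantitative input is Proposition \ref{prop:CuspShape} together with the Remark following it: normalizing the meridian of a crossing-circle cusp to length $1$, the longitude has length $2\cos(\pi/n)$, and as $n\to\infty$ this tends to $2$; more importantly, the relevant geodesic sitting in the crossing disk has a length that \emph{decreases} as $n$ grows, since the rectangle $R=P_+\cap H$ becomes long and thin.

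The steps, in order, would be: (1) Identify the relevant closed geodesic precisely. The crossing disk of $c_i$ is a totally geodesic surface; the waist curve of this twice-punctured disk is a geodesic arc between the two punctures, and doubling along the reflection surface $P_+\cup P_-$ produces a closed geodesic $\gamma_n$ in $M_n$. Because every polyhedral partner is built from the same two polyhedra $P_\pm$ with gluings altered only along shaded faces (Definition \ref{defn:GeomPart}), and the crossing disks persist as totally geodesic surfaces, the same geodesic $\gamma_n$ (or its twisted analogue) appears in every polyhedral partner with the same or comparable length. (2) Compute the length of $\gamma_n$ from the geometry of $P_+$. Using the circle-packing picture of Figure \ref{fig:PreInv} and the trigonometric data $r=\tan\theta(\csc\theta-1)$, $R=\tan\theta(\csc\theta+1)$ with $\theta=\pi/n$ already extracted in the proof of Lemma \ref{lem:OneTile}, one translates the Euclidean configuration into a hyperbolic length via the standard formula for the distance between two geodesic planes / the length of a geodesic orthogonal to two faces. (3) Show this length is monotonically decreasing in $n$ and check that at $n=7$ it already falls below $0.862554627$, then conclude via Theorem \ref{thm:NeRe} that $M_n$ is non-arithmetic for all $n\ge 7$. (4) Observe the argument is insensitive to the gluing modifications, so it applies verbatim to all polyhedral partners.

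The main obstacle I anticipate is step (2): pinning down \emph{which} geodesic to use and computing its length rigorously. The cusp-shape data gives the Euclidean shape of horospherical cross-sections but not directly the length of a closed geodesic; one needs to locate a geodesic that is orthogonal to two specified totally geodesic faces of $P_+$ (or invariant under a loxodromic element of $\Gamma$ read off from the face-pairings) and then apply the relation $\cosh(\ell) = $ (an explicit function of the inversive distance between the bounding circles in the packing). A cleaner route may be to avoid an exact length computation and instead use a comparison/monotonicity estimate: as $n$ increases, the two shaded circles bounding the crossing disk of $c_i$ have radii $r,R\to 0$ relative to the unit circles, so the corresponding geodesic planes in $\mathbb{H}^3$ come arbitrarily close together, forcing an arbitrarily short orthogeodesic; combined with an explicit check at $n=7$ this suffices. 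Handling the twisted case for polyhedral partners requires only noting that the twist alters the gluing but not the existence of the short orthogeodesic between these two planes, since that geodesic lies inside a single polyhedron.
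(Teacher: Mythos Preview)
Your overall strategy---produce an explicit short closed geodesic and invoke Theorem~\ref{thm:NeRe}---is exactly what the paper does, but the geodesic you propose will not work, and your numerical claim is wrong even for the correct geodesic.

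First, the geodesic. Anything living \emph{inside} a crossing disk is useless: the crossing disk is a totally geodesic thrice-punctured sphere, which has a unique hyperbolic structure, so every closed geodesic on it has length independent of $n$. Your alternative suggestion---the orthogeodesic between the two shaded circles of radii $r$ and $R$---does not exist, because those two circles are \emph{tangent} at the crossing-circle vertex $p$ (see Figure~\ref{fig:PreInv}); the corresponding planes in $\mathbb{H}^3$ share an ideal point and have no common perpendicular. The paper instead takes the common perpendicular $\gamma^+$ between the two concentric \emph{white} circles $W_{n+1}$ and $W_{n+2}$, whose radii are $\csc(\pi/n)\pm 1$; doubling across the reflection to $P_-$ closes it up to a geodesic $\gamma$ of length $\ell(\gamma)=2\ln\bigl(\frac{\csc(\pi/n)+1}{\csc(\pi/n)-1}\bigr)$. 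This choice is also what makes the polyhedral-partner extension immediate: $\gamma^\pm$ runs between white faces and never touches a shaded face, so it survives every shaded-face regluing. Your shaded-face candidate would not obviously persist.

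Second, the numerics. Your step~(3) asserts that at $n=7$ the length already falls below $0.862554627$. It does not: with the correct geodesic one gets $\ell(\gamma)\approx 1.86$ at $n=7$, and the length only drops below the $0.8625$ threshold around $n=15$. For $7\le n\le 14$ the paper uses the \emph{other} clause of Theorem~\ref{thm:NeRe}: the length is still below $1.9248\ldots$, so if $M_n$ were arithmetic the value $\ell(\gamma)$ would have to appear in Neumann--Reid's Table~1, and one checks it does not. Without this table comparison the argument for small $n$ is incomplete.
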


\begin{proof}
	We first show that $M_{n}$ has a sufficiently short geodesic when $n \geq 7$ and then apply Theorem \ref{thm:NeRe} to rule out arithmeticity. The fact that polyhedral partners are also non-arithmetic will immediately follow from how this geodesic is constructed. 

	Consider the circle packing used to build the polyhedra $P_{+}$ for $M_{n}$ shown in Figure \ref{fig:PretzelCirclePacking}. The circles $W_{n+1}$ and $W_{n+2}$ bound planes that contain faces of $P_+$. Let $\gamma^+$ be the vertical line segment through the origin that is the common perpendicular from $W_{n+1}$ to $W_{n+2}$.  The radii of the $W_i$ are needed to compute the length of $\gamma^+$, and Figures  \ref{fig:PreInv} and \ref{fig:PreInvPCirc}$(b)$ show that they can be chosen to be $\csc(\pi/n)\pm 1$.  The hyperbolic length of $\gamma^+$ is therefore $\ell(\gamma^+) = \ln\left(\frac{\csc(\pi/n)+1}{\csc(\pi/n) -1}\right)$.

To construct the short geodesic $\gamma$, let $P_-$ be the reflection of $P_+$ across the plane $H$ whose boundary is $W_{n+2}$ and let $\gamma^-$ be the reflection of $\gamma^+$.  The polyhedron $P_+\cup P_-$ is a fundamental region for $M_n$, with the innermost face (bounded by $W_{n+1}$) glued to the outermost (the reflection of the innermost across $H$) by a dilation.  Thus the curve $\gamma^+ \cup \gamma^-$ projects to a closed geodesic $\gamma$ in $M_n$.  Since the curves $\gamma^{\pm}$ are isometric, the length of $\gamma$ in $M_n$ is twice that of $\gamma^+$, or $\ell(\gamma) = 2\ln\left(\frac{\csc(\pi/n)+1}{\csc(\pi/n) -1}\right)$.

For $n \geq 15$, this creates a geodesic of length less than $0.862554627$, and so, by Theorem $\ref{thm:NeRe}$, any such $M_{n}$ is non-arithmetic. At the same time, for $7 \leq n  \leq 15$, we can compare $\ell(\gamma)$ to the geodesic lengths given in Table 1 on page 283 in the original statement of Theorem \ref{thm:NeRe} in \cite{NR1990}. Since $\ell(\gamma)$ does not match up with any of these values, we now know that $M_{n}$ is non-arithmetic for $n \geq 7$.
	
Finally, we note that the geodesic segments $\gamma^{+}$ and $\gamma^{-}$ run between white faces in their respective polyhedra, and never intersect the shaded faces. Thus, their union will always project to a geodesic of length $\ell(\gamma)$ in any polyhedral partner of $M_{n}$. Therefore, the same short geodesic analysis applied in the previous paragraph also applies to any polyhedral partner of $M_{n}$, as needed. \end{proof}

Proposition \ref{prop:NonarithMn} applies to $M_n$ and all of its polyhedral partners, making it applicable to a broad class of hyperbolic manifolds (see Remark \ref{rmk:GeomPartners}).  We now focus on the manifolds $M_n$ (and their half-twist partners), and classify which are arithmetic.  To do so, both Proposition \ref{prop:NonarithMn} and our invariant trace field calculations limit the possible values of $n$ to $3,4, 6$, and we use Vinberg's criteria to show $M_6$ is not arithmetic.  These techniques are more limited in scope, and do not immediately apply to polyhedral partners of $M_n$.

\begin{thmA}
	$M_{n}$ and all of it's half-twist partners are arithmetic if and only if $n = 3,4$. 
\end{thmA}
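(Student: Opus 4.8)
The plan is to reduce the biconditional to a statement about the single manifold $M_n$ and then handle the three regimes $n\ge 7$, $n=5$, and $n\in\{3,4,6\}$ by separate mechanisms. Arithmeticity is a commensurability invariant, and by Proposition~\ref{prop:RefOrbComm} every half-twist partner of $M_n$ is commensurable with $M_n$ (indeed with the reflection orbifold $\mathcal{O}_P=\mathbb{H}^3/\Gamma_P$ of a single copy of $P_\pm$). Hence ``$M_n$ and all of its half-twist partners are arithmetic'' is equivalent to ``$M_n$ is arithmetic'', and it suffices to show the latter holds exactly for $n\in\{3,4\}$. For $n\ge 7$ this is immediate from Proposition~\ref{prop:NonarithMn}.

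For $n=5$ I would invoke the invariant trace field computation. A non-compact, finite-volume, arithmetic hyperbolic $3$-manifold is commensurable with a Bianchi orbifold $\mathbb{H}^3/\PSL_2(\mathcal{O}_d)$, so its invariant trace field is an imaginary quadratic field $\Q(\sqrt{-d})$, a degree $2$ extension of $\Q$ \cite[Ch.~8]{MR}. But Lemma~\ref{lem:degreephi} gives $[kM_5:\Q]=\phi(5)=4$, so $M_5$ is non-arithmetic. (Since $\phi(n)\ge 4$ for every $n\ge 7$ as well, this argument also re-proves those cases; as $\phi(n)=2$ only for $n\in\{3,4,6\}$, the remaining work is confined to $n\in\{3,4,6\}$.)

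For $n\in\{3,4,6\}$ I would apply Vinberg's arithmeticity criterion to the right-angled reflection group $\Gamma_P$, which is permitted because $M_n$ is commensurable with $\mathcal{O}_P$. The white circle packing of Figure~\ref{fig:PretzelCirclePacking} together with its dual packing by the shaded circles makes every bounding hyperplane of $P$ explicit, so the Gram matrix $G=(g_{ij})$ of $P$ can be written down: since $P$ is right-angled and ideal, each off-diagonal entry is $0$ when the two faces share an edge, $-1$ when they meet at an ideal vertex, and $-\cosh d_{ij}$ when the faces are disjoint, $d_{ij}$ being the length of their common perpendicular. These lengths are read directly off the packing; for instance, taking the concentric white circles $W_{n+1}$, $W_{n+2}$ to have radii $\csc(\pi/n)-1$ and $\csc(\pi/n)+1$ gives $\cosh d(W_{n+1},W_{n+2})=\frac{\csc^2(\pi/n)+1}{\csc^2(\pi/n)-1}$, which equals $7$ for $n=3$, $3$ for $n=4$, and $\frac53$ for $n=6$. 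For $n=3,4$ one then checks that every entry of $G$, hence every cyclic product $g_{i_1i_2}g_{i_2i_3}\cdots g_{i_mi_1}$, is a rational integer; the field generated by the cyclic products is therefore $\Q$, which is totally real, has ring of integers $\ZZ$, and has no nontrivial real embedding, so all hypotheses of Vinberg's criterion hold and $\mathcal{O}_P$, hence $M_n$ and all of its half-twist partners, is arithmetic. For $n=6$, by contrast, the $2$-cycle at $(W_{n+1},W_{n+2})$ gives the cyclic product $(\cosh d(W_{n+1},W_{n+2}))^2=\frac{25}{9}$, which is not an algebraic integer, so Vinberg's criterion fails and $M_6$, together with all of its half-twist partners, is non-arithmetic. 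This proves the theorem.

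The step I expect to be the main obstacle is the full Gram matrix analysis for $n=3$ and $n=4$. For $n=6$ only one offending cyclic product is needed, and the concentric white circles supply it immediately; but for $n=3,4$ one must locate all of the faces of $P$ precisely --- including the shaded triangular faces and the shaded--white incidences, not just the pair $W_{n+1},W_{n+2}$ --- and verify that the subfield of $\overline{\Q}$ generated by every cyclic product of Gram entries collapses to $\Q$ (which is exactly what renders Vinberg's positivity condition at the nonidentity real places vacuous). This is a finite but delicate trigonometric computation with the circle packings of Figures~\ref{fig:PreInv}--\ref{fig:PreInvPCirc}; everything else follows from Sections~\ref{section:tracefields} and \ref{sec:HRO}, the structure theory of Bianchi groups, and the statement of Vinberg's criterion.
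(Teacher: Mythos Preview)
Your argument is correct and structurally parallel to the paper's: both reduce to $M_n$ via Proposition~\ref{prop:RefOrbComm}, dispose of $n\ge 5$ via the trace-field degree (with Proposition~\ref{prop:NonarithMn} as a geometric back-up for $n\ge 7$), and kill $n=6$ by a single Vinberg obstruction coming from the pair $(W_{n+1},W_{n+2})$. Your $\cosh d=5/3$ and the paper's Gram entry $-2\cosh(\ln 3)=-10/3$ are the same computation up to the normalization of the Gram matrix.

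The one genuine divergence is at $n=3,4$. The paper does not run Vinberg there at all; it simply cites Thurston's observation and the crushtacean criterion of Chesebro--DeBlois--Wilton \cite[Lemma 7.6, Corollary 7.5]{CDW2012} to conclude arithmeticity immediately. Your route---verifying the full Vinberg criterion by computing every Gram entry of $P$ and checking that all cyclic products lie in $\ZZ$---is self-contained and conceptually uniform with the $n=6$ case, but, as you anticipate, is the laborious part: $P$ has $3n+2$ faces, so one must account for all disjoint white--white, white--shaded, and shaded--shaded pairs, not just $(W_{n+1},W_{n+2})$. The paper's citation buys brevity; your direct verification buys independence from outside sources and a single mechanism for all three small cases.
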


\begin{proof}
For a cusped, finite volume, hyperbolic manifold to be arithmetic, it is a necessary condition that its invariant trace field be an imaginary quadratic extension of $\Q$ \cite[Theorem 8.2.3]{MR}.  
By Proposition \ref{prop:invtracfields}, the invariant trace field of $M_n$ is $kM_n=\mathbb{Q}(\cos(\pi/n)i)$, which by Lemma \ref{lem:degreephi}, has degree $[kM_n:\Q]=\phi(n)$.  A straight forward computation shows that $\phi(n)=2$ implies that $n$ is 3, 4, or 6.  
Thus only $M_3$, $M_4$ and $M_6$ can be arithmetic. Additionally, Proposition \ref{prop:NonarithMn} also implies that only $M_n$ with $n \leq 6$ could be arithmetic. 

The fact that $M_3$ and $M_4$ are arithmetic was observed by Thurston in \cite[Chapter 6]{Thurston}, a fact that also follows from \cite[Lemma 7.6 and Corollary 7.5]{CDW2012} where they show that the crushtaceans of $M_3$ and $M_4$ imply they are arithmetic.

We now analyze the case of $M_6$.  Let $P$ denote  the hyperbolic polyhedra associated to $M_6$.  By Proposition \ref{prop:RefOrbComm}, $M_6$ is commensurable to the hyperbolic orbifold $\mathcal{O}=\mathbb{H}^3/\Gamma_P$ generated by reflections through the faces of $P$.  Since arithmeticity is a commensurability invariant, it suffices to analyze $\mathcal{O}$.  Associated to $P$ is the Gram matrix $G(P)$, which encodes the angles between intersecting faces, and distances between disjoint faces of $P$.  Vinberg's arithmeticity criterion in this context states that for $\mathcal{O}$ to be arithmetic, it is necessary that each entry in the Gram matrix is an algebraic integer \cite[10.4.5]{MR}.  A direct calculation shows this fails for the polyhedron $P$, which we now describe.

The Gram matrix entry corresponding to disjoint faces is $-2\cosh(\ell(\gamma))$ where $\ell(\gamma)$ is the length of the common perpendicular between the faces.  Let $\gamma^-$ be the common perpendicular between faces $W_{n+1}$ and $W_{n+2}$ of Figure \ref{fig:PretzelCirclePacking}. When $n=6$ the calculations of the proof of Proposition \ref{prop:NonarithMn} reduce to $\ell(\gamma^-) = \ln\left(\frac{\csc(\pi/6)+1}{\csc(\pi/6) -1}\right) = \ln 3$, and the corresponding Gram matrix entry is $-2\cosh(\ln 3) = -10/3$. Hence Vinberg's criterion fails, and $M_6$ is not arithmetic.

The extension to half-twist partners follows from Proposition \ref{prop:RefOrbComm}.
\end{proof}

\begin{rem}\label{rem:M6} The manifold $M_6$ is a rather intriguing example.  We know $kM_6=\Q(\sqrt{-3})$.  In \cite[Chapter 6]{Thurston}, Thurston remarked that the volume of $M_6$ is 20 times the volume of the figure-eight knot complement, the arithmetic knot complement whose invariant trace field happens to be $\Q(\sqrt{-3})$.  Despite these coincidences, $M_6$ is not arithmetic.  This suggests that $M_6$ should be considered in the future when looking for non-arithmetic manifolds that share other attributes with arithmetic manifolds.
\end{rem}

\section{Symmetries and hidden symmetries}
\label{sec:SymandHS}

Here, we completely classify the symmetries and hidden symmetries of pretzel FAL complements.  As a corollary, we are able to completely determine when pretzel FAL complements (and their half-twist partners)  are commensurable with one another. First, we define these terms and introduce some important tools necessary to understand why symmetries and hidden symmetries play a pivotal role in analyzing commensurability classes of hyperbolic $3$-manifolds.

 Given a hyperbolic $3$-manifold $M = \mathbb{H}^{3} / \Gamma$, the group of symmetries of $M$, denoted $Sym(M)$, is the group of self-homeomorphisms of $M$, up to isotopy. This group is homeomorphic to $N(\Gamma) / \Gamma$, where $N(\Gamma)$ denotes the normalizer of $\Gamma$ in $\text{Isom}(\mathbb{H}^{3})$. Let $Sym^{+}(M) \cong N^{+}(\Gamma) / \Gamma$ denote the subgroup of orientation-preserving symmetries, where $N^{+}(\Gamma)$ denotes the restriction of $N(\Gamma)$ to orientation-preserving isometries. To define a hidden symmetry of $M$, we first need to introduce the \textit{commensurator} of $\Gamma$, which is $C(\Gamma) = \left\lbrace g \in \text{Isom}(\mathbb{H}^{3}) : | \Gamma : \Gamma \cap g \Gamma g^{-1} | < \infty \right\rbrace$. We let $C^{+}(\Gamma)$ denote the restriction of $C(\Gamma)$ to orientation-preserving isometries of $\mathbb{H}^{3}$. 

Studying commensurators of $\Gamma$ is another way to approach studying the commensurability class of $M$: $M$ is commensurable with another hyperbolic $3$-manifold $N$ if and only if the corresponding commensurators of $M$ and $N$ are conjugate in  $\text{Isom}(\mathbb{H}^{3})$; see Lemma 2.3 of \cite{Wa2011}. Note that, $\Gamma \subset N(\Gamma) \subset C(\Gamma)$. \textit{Hidden symmetries} of $M$ correspond with nontrivial elements of $C(\Gamma)/ N(\Gamma)$. Geometrically, $M$ admits a hidden symmetry if there exists a symmetry of a finite cover of $M$ that is not a lift of an isometry of $M$. See \cite{Wa2011} for more background on hidden symmetries and commensurators of hyperbolic $3$-manifolds. 

Hidden symmetries also play a defining role in the arithmeticity of hyperbolic $3$-manifolds, and more generally, hyperbolic $3$-orbifolds. The work of Margulis \cite{Ma1991} shows that $C(\Gamma)$ is discrete in $\text{Isom}(\mathbb{H}^{3})$ with $\Gamma$ finite index in $C(\Gamma)$ if and only if $\Gamma$ is non-arithmetic.  Thus, $M = \mathbb{H}^{3} / \Gamma$ is arithmetic if and only if  $M$ has infinitely many hidden symmetries. Furthermore, in the non-arithmetic case, the hyperbolic $3$-orbifold $\mathcal{O} = \mathbb{H}^{3} / C(\Gamma)$ is the unique minimal orbifold in the commensurability class of $M$. This minimal orbifold (and its orientable variant: $\mathcal{O}^{+} = \mathbb{H}^{3} / C^{+}(\Gamma)$) will play an essential role in examining commensurability classes here. In particular, if $M$ admits no hidden symmetries, then $\mathcal{O} = \mathbb{H}^{3} / N(\Gamma) = M / Sym(M)$. In this case, we only need to determine the symmetries of $M$ to get our hands on the minimal orbifold in the commensurability class of $M$.

Now, we focus on determining the symmetries and hidden symmetries of our pretzel FAL complements. For our purposes, we will work with a symmetric diagram of $\mathcal{P}_{n}$; see Figure \ref{fig:FAl3Pretz2}. These links and their symmetric diagrams were examined in Example 6.8.7 of Chapter 6 of Thurston's notes \cite{Thurston}. In this setting, Thurston used $D_{2n}$ to denote our $\mathcal{P}_{n}$.   

\begin{figure}[ht]
	\centering
	\begin{overpic}[width = \textwidth]{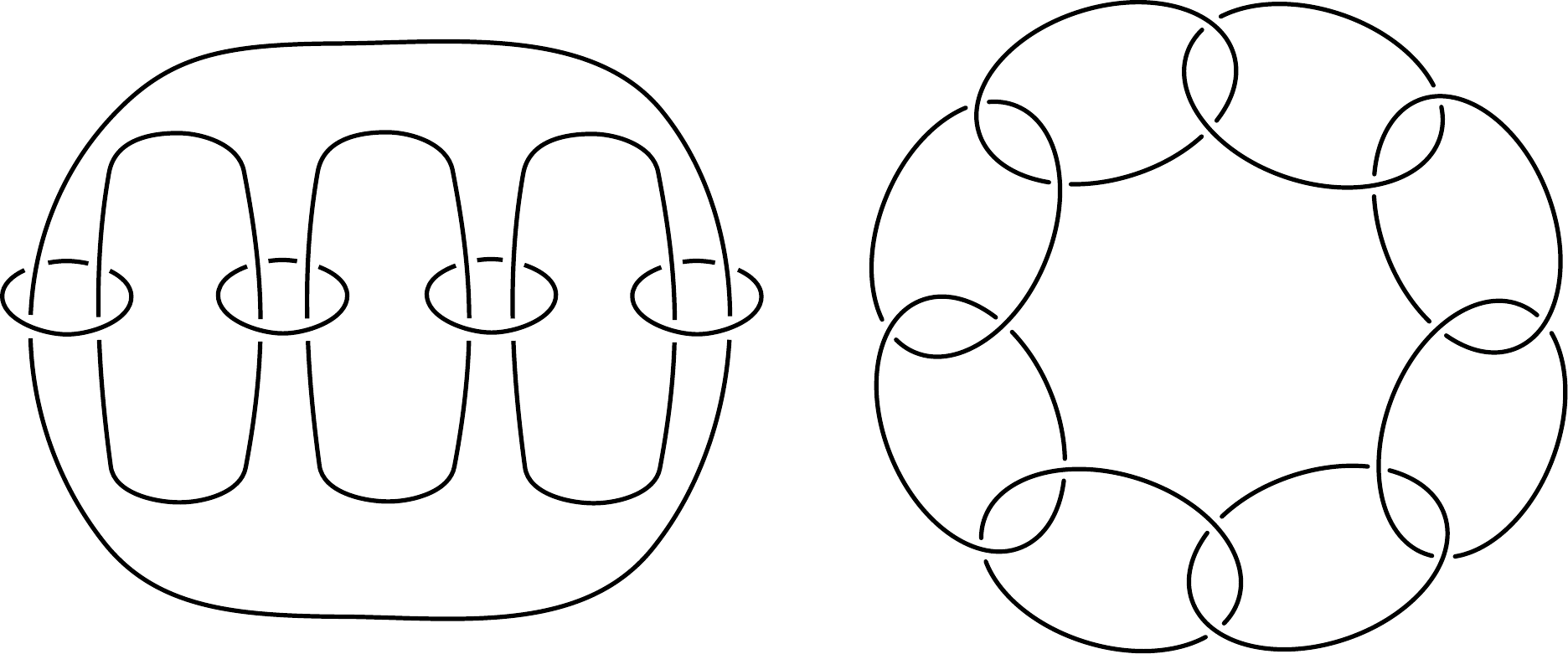}
		\put(50,18){\LARGE{$\cong$}}
		\put(3,22){$c_{1}$}
		\put(17,22){$c_{2}$}
		\put(30,22){$c_{3}$}
		\put(43.5,22){$c_{4}$}
		\put(60, 27){$c_{1}$}
		\put(82.75,35){$c_{2}$}
		\put(94,14){$c_{3}$}
		\put(69,5){$c_{4}$}
	\end{overpic}
	\caption{Two diagrams of the same pretzel FAL, $\mathcal{P}_{4}$. The left diagram comes from augmenting a pretzel link and the right diagram is the symmetric diagram as described in Thurston's notes. Crossing circles are labeled in each diagram.}
		\label{fig:FAl3Pretz2}
\end{figure}

In what follows, for a link $K \subset \mathbb{S}^{3}$, we let $Sym(\mathbb{S}^{3}, K)$ denote the group of homeomorphisms of the pair $(\mathbb{S}^{3}, K)$, up to isotopy. We use $Sym^{+}(\mathbb{S}^{3}, K)$ to denote the restriction to orientation-preserving symmetries. First, we identify a visually obvious subgroup of $Sym^{+}(\mathbb{S}^{3}, \mathcal{P}_{n})$, as viewed from the symmetric diagram of $\mathcal{P}_{n}$. Let $\alpha$ be the symmetry that takes every link component to its (clockwise) neighbor, swapping each knot component with a crossing circle component.  Let $\beta$ be the $180^{\circ}$ rotation about the circular axis depicted in Figure \ref{fig:SymDiag}. Let $\gamma$ be the $180^{\circ}$ rotation about the linear axis depicted in Figure \ref{fig:SymDiag}. These three elements generate a group of orientation-preserving symmetries of order $8n$, which we denote by $G^{+}_{n}$. 


\begin{figure}[ht]
	\centering
	\begin{overpic}[scale=1.0]{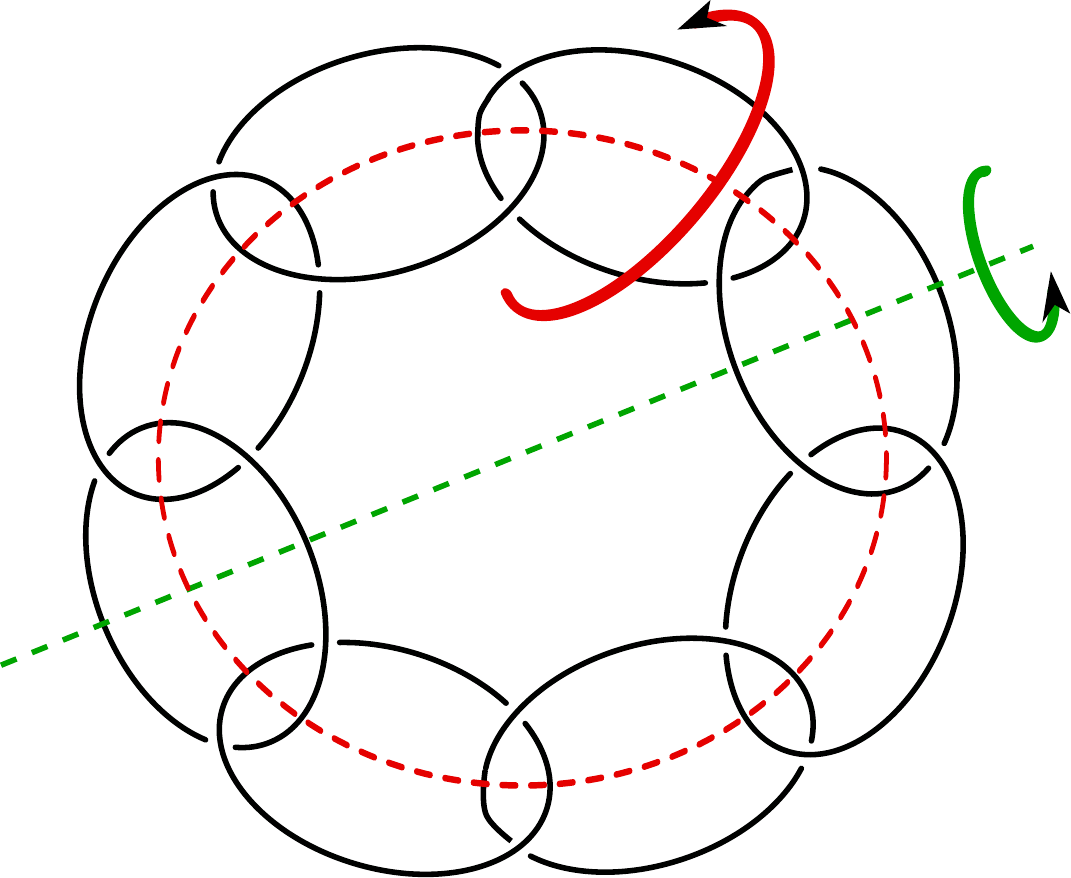}
		\put(73, 77){\LARGE{$\beta$}}
		\put(98,59){\LARGE{$\gamma$}}
	\end{overpic}
		\caption{A symmetric diagram for $\mathcal{P}_{4}$}
	\label{fig:SymDiag}
\end{figure}

Note that, $G^{+}_{n} \leq Sym^{+}(\mathbb{S}^{3}, \mathcal{P}_{n}) \leq Sym^{+}(M_{n})$, and both of these containments could be strict. By Mostow--Prasad Rigidity, the group of symmetries of a hyperbolic link in $\mathbb{S}^{3}$ is a subgroup of the symmetries of the corresponding link complement. For hyperbolic knots, these two groups are always equal by the work of Gordon--Luecke. However, for hyperbolic links with more than one component, this could be a strict containment; see \cite{HW1992} for such an example. At the same time, it is also possible that $\mathcal{P}_{n}$ has orientation-preserving symmetries beyond the ones we identified from its symmetric diagram. Our first goal is to show that we have identified all orientation-preserving symmetries,  and in fact, all the hidden symmetries of $M_{n}$.

\begin{thm}
	\label{thm:noHS}
	For any non-arithmetic $M_{n} = \mathbb{S}^{3} \setminus \mathcal{P}_{n}$, we have that $G^{+}_{n} = Sym^{+}(M_{n})$ and $M_{n}$ admits no (orientation-preserving) hidden symmetries. 
\end{thm}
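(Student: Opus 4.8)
The plan is to bound the volume of the minimal orbifold $\mathcal{O}_n = M_n / \mathrm{Sym}(M_n)$ (and, in the presence of hidden symmetries, the even smaller orbifold $\mathbb{H}^3/C(\Gamma)$) from below, and then play this against an upper bound coming from the visible symmetry group $G_n^+$ together with the fact that any hidden symmetry would force $\mathcal{O}_n$ to be a very low-volume single-cusped hyperbolic $3$-orbifold. First I would observe that $\mathcal{P}_n$ has a single cusp type up to symmetry (every component can be taken to every other), so $M_n$ has exactly one cusp-orbit and hence $\mathcal{O}_n$ — and a fortiori $\mathbb{H}^3/C(\Gamma)$ — is a one-cusped orientable hyperbolic $3$-orbifold. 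The key input is the classification (Meyer, or Gehring--Martin--Marshall type bounds) of minimal-volume cusped hyperbolic $3$-orbifolds: below a known volume threshold, any such orbifold is arithmetic, with invariant trace field $\Q(\sqrt{-1})$ or $\Q(\sqrt{-3})$. Since $M_n$ is assumed non-arithmetic, $\mathbb{H}^3/C(\Gamma)$ is non-arithmetic too, so if its volume were below that threshold we would reach a contradiction; thus either there are no hidden symmetries, or the commensurator quotient has volume at least the threshold.

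Next I would make the two sides quantitative. For the upper bound: $\mathrm{Vol}(M_n) = 2\,\mathrm{Vol}(P_+)$, and $P_+$ is a right-angled ideal polyhedron whose combinatorics (a prism-like polyhedron, $n$ ``outer'' triangular shaded faces, an $n$-gon of white faces, plus the two concentric white circles) are completely explicit; its volume grows linearly in $n$, on the order of $n v_{\mathrm{oct}}/2$ up to bounded error. Dividing by $|G_n^+| = 8n$ (times $2$ if orientation-reversing symmetries are also present) gives $\mathrm{Vol}(\mathcal{O}_n^+) \le \mathrm{Vol}(M_n)/|G_n^+|$, which is bounded above by a universal constant independent of $n$. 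If $M_n$ had a hidden symmetry, $\mathbb{H}^3/C^+(\Gamma)$ would have volume a proper divisor fraction of this, hence still below the minimal-volume threshold for non-arithmetic one-cusped orientable hyperbolic $3$-orbifolds — contradiction. This establishes that $M_n$ has no hidden symmetries, and therefore $\mathcal{O}_n = M_n/\mathrm{Sym}(M_n) = \mathbb{H}^3/N(\Gamma)$.

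It remains to show $G_n^+ = \mathrm{Sym}^+(M_n)$, i.e. that the visible symmetries are all of them. Having ruled out hidden symmetries, $\mathrm{Sym}^+(M_n) = N^+(\Gamma)/\Gamma$ and $\mathcal{O}_n^+ = M_n/\mathrm{Sym}^+(M_n)$, so it suffices to show $M_n/G_n^+$ is already the minimal orientable orbifold, equivalently $\mathrm{Vol}(M_n)/|G_n^+| = \mathrm{Vol}(\mathcal{O}_n^+)$, equivalently $[\mathrm{Sym}^+(M_n):G_n^+]=1$. For this I would identify $M_n/G_n^+$ concretely: since $G_n^+$ acts on the polyhedral decomposition, the quotient should be (an orientable double of) a small piece of $P_+$ — in fact a single fundamental ``corner'' of the right-angled polyhedron, from which one reads off that $\mathcal{O}_n^+$ already has a rigid cusp cross-section (a $(2,2,2,2)$- or $(2,4,4)$-type Euclidean orbifold, depending on parity of $n$) leaving no room for a further quotient. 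Alternatively, use the invariant trace field: $kM_n = k\mathcal{O}_n$ has degree $\phi(n)$ over $\Q$, while a further symmetry would be constrained by how it permutes the cusp and acts on the geodesic $\gamma$ of Proposition~\ref{prop:NonarithMn}, whose length $\ell(\gamma) = 2\ln\!\big(\frac{\csc(\pi/n)+1}{\csc(\pi/n)-1}\big)$ together with the cusp shape $2\cos(\pi/n)i$ pins down the possible symmetry permutations of components.

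The main obstacle I anticipate is the last step — proving $G_n^+$ is the full orientation-preserving symmetry group, not merely a finite-index subgroup. The volume-comparison argument only bounds $[\mathrm{Sym}^+(M_n):G_n^+]$ by the ratio between $\mathrm{Vol}(M_n)/|G_n^+|$ and the minimal non-arithmetic cusped orbifold volume, which need not force the index to be exactly $1$; closing this gap requires genuinely identifying $M_n/G_n^+$ with a specific minimal orbifold or otherwise showing its cusp/combinatorics admit no proper quotient. Concretely I would handle it by exhibiting that the boundary torus of the cusp of $M_n/G_n^+$ is already a rigid Euclidean orbifold (rectangle with $\mathbb{Z}/2$ reflections in all sides, forced by the meridian lying in the reflection surface per the Remark after Proposition~\ref{prop:CuspShape}), so that any additional symmetry of $M_n$ would have to act trivially on this cusp and hence, by a standard argument using that $M_n$ is generated by its cusp together with geodesics between white faces, be trivial; the parity of $n$ will require splitting into cases, but each case is a finite check against the explicit cusp shape and the explicit combinatorics of $P_+$.
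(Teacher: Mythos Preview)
Your overall strategy (quotient by $G_n^+$, bound the volume, and invoke a low-volume classification) matches the paper's, but there are two genuine gaps.

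First, separating ``no hidden symmetries'' from ``$G_n^+=\mathrm{Sym}^+(M_n)$'' is unnecessary and is precisely what creates the obstacle you flag at the end. The paper observes that \emph{either} a non-visible symmetry \emph{or} a hidden symmetry forces $Q_n^+:=M_n/G_n^+$ to properly cover $\mathcal{O}_n^+=\mathbb{H}^3/C^+(\Gamma_n)$, hence $\mathrm{vol}(\mathcal{O}_n^+)\le \tfrac12\mathrm{vol}(Q_n^+)=\tfrac12 f(n)<\tfrac{v_{\mathrm{oct}}}{8}$. Ruling this out simultaneously gives both conclusions; no separate argument for $[\mathrm{Sym}^+(M_n):G_n^+]=1$ is needed, and your proposed ``rigid cusp cross-section admits no further quotient'' argument is neither required nor obviously correct as stated.

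Second, your key input --- ``below a known volume threshold, any one-cusped orientable hyperbolic $3$-orbifold is arithmetic'' --- is false without an extra hypothesis. Adams' classification applies only to orbifolds with a \emph{non-rigid} cusp; for rigid cusps there is no such blanket statement. The paper's case split is exactly this: if the cusp of $Q_n^+$ is non-rigid, Adams forces $\mathcal{O}_n^+$ to be arithmetic, a contradiction; if the cusp is rigid, then the cusp field of $M_n$ (which equals $\Q(\cos(\pi/n)i)$) must lie in $\Q(i)$ or $\Q(\sqrt{-3})$, and among non-arithmetic $M_n$ this happens only for $n=6$. That single case is then finished by a direct SnapPy computation of $|\mathrm{Sym}^+(M_6)|=48=|G_6^+|$, after which any hidden symmetry would give a non-normal cover and force $\mathrm{vol}(\mathcal{O}_6^+)\le f(6)/3\approx 0.282$, below the absolute minimum for a one-cusped orientable hyperbolic $3$-orbifold. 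Your proposal never isolates $n=6$ and so never closes the rigid-cusp case.
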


To prove this theorem, we first need a lemma about the volumes of $M_{n}$.  Let $\mathcal{L} (\theta) = - \int_{0}^{\theta} \ln|2\sin(x)| dx$ denote the Lobachevsky function. In Example 6.8.7 in Chapter 6 of Thurston's notes \cite{Thurston}, the following volume formula is given: $vol(M_{n}) = 8n(\mathcal{L}(\frac{\pi}{4}+\frac{\pi}{2n}) + \mathcal{L}(\frac{\pi}{4} - \frac{\pi}{2n}))$. Let $f(n) = \frac{vol(M_{n})}{8n}$. This function will actually give us the volume of the minimal orbifold in the commensurability class of $M_{n}$. 

Let $v_{oct}$ denote the volume of a regular ideal hyperbolic octahedron. 

\begin{lemma}
\label{lemma:volprops}	
The function $f(n)$ is strictly increasing and $\lim_{n \rightarrow \infty} f(n) = \frac{v_{oct}}{4} \approx 0.915965$, for $n > 2$.  
\end{lemma}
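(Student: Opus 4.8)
The plan is to reparametrize by the angle $\theta = \pi/(2n)$, so that $f(n) = g(\theta)$, where $g(\theta) := \mathcal{L}(\pi/4+\theta) + \mathcal{L}(\pi/4-\theta)$. For $n > 2$ we have $\theta \in (0, \pi/4)$, and the map $n \mapsto \pi/(2n)$ is strictly decreasing; hence it suffices to show that $g$ is strictly decreasing on $(0,\pi/4)$ and to compute $\lim_{\theta \to 0^+} g(\theta)$.

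For monotonicity I would differentiate. Recall that $\mathcal{L}$ is continuous on $\mathbb{R}$ and, away from the zeros of $\sin$, is differentiable with $\mathcal{L}'(x) = -\ln|2\sin x|$ by the Fundamental Theorem of Calculus. Since $\pi/4\pm\theta \in (0,\pi/2)$ for $\theta \in (0,\pi/4)$, this gives
\[ g'(\theta) = \mathcal{L}'(\pi/4+\theta) - \mathcal{L}'(\pi/4-\theta) = \ln\left(\frac{\sin(\pi/4-\theta)}{\sin(\pi/4+\theta)}\right). \]
Because $\sin$ is strictly increasing on $(0,\pi/2)$ and $0 < \pi/4-\theta < \pi/4+\theta < \pi/2$, the quotient inside the logarithm lies in $(0,1)$, so $g'(\theta) < 0$ throughout $(0,\pi/4)$. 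Hence $g$ is strictly decreasing there, and therefore $f(n) = g(\pi/(2n))$ is strictly increasing in $n$ for $n > 2$.

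For the limit, continuity of $\mathcal{L}$ together with $\mathcal{L}(0) = 0$ gives $\lim_{\theta \to 0^+} g(\theta) = 2\mathcal{L}(\pi/4)$. Invoking the classical identity $v_{oct} = 8\mathcal{L}(\pi/4)$ for the volume of the regular ideal octahedron, this limit equals $v_{oct}/4 \approx 0.915965$. Since $n \to \infty$ corresponds to $\theta \to 0^+$, we conclude $\lim_{n\to\infty} f(n) = v_{oct}/4$, and the monotonicity just established shows the limit is approached strictly from below.

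I do not anticipate a serious obstacle here: this is a one-variable calculus argument. The only points needing a little care are justifying the differentiation of $\mathcal{L}$ on the relevant range — the integrand $\ln|2\sin x|$ is continuous on $(0,\pi/2)$, being bounded away from the zeros of $\sin$ — and recalling the octahedral volume identity $v_{oct} = 8\mathcal{L}(\pi/4)$; both are standard.
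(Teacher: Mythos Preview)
Your proof is correct and essentially the same as the paper's: both differentiate using $\mathcal{L}'(x) = -\ln|2\sin x|$ and reduce monotonicity to the fact that $\sin$ is increasing on $(0,\pi/2)$, then invoke continuity of $\mathcal{L}$ and the identity $v_{oct} = 8\mathcal{L}(\pi/4)$ for the limit. The only cosmetic difference is that the paper differentiates directly in $n$ (picking up a harmless factor of $\pi/(2n^2)$ from the chain rule), whereas you reparametrize by $\theta = \pi/(2n)$ first.
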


\begin{proof}
A great exercise for your calculus students shows that $f'(n) = \frac{\pi}{2n^{2}} \ln | \frac{\sin(\frac{\pi}{4}+\frac{\pi}{2n})}{\sin(\frac{\pi}{4}-\frac{\pi}{2n})} |$. Notice that $f'(n) > 0$ if and only if $\frac{\sin(\frac{\pi}{4}+\frac{\pi}{2n})}{\sin(\frac{\pi}{4}-\frac{\pi}{2n})} > 1$. This second inequality holds if and only if $\sin(\frac{\pi}{4}+\frac{\pi}{2n}) > \sin(\frac{\pi}{4}-\frac{\pi}{2n})$. This inequality holds for $n>2$ since $\sin(\theta)$ is increasing on the interval $0 < \theta < \frac{\pi}{2}$. Thus, since $f'(n) > 0$ on our domain, we can conclude that $f(x)$ is strictly increasing on our domain. 

Also, we have that	 $\lim_{n \rightarrow \infty} f(n) = \lim_{n \rightarrow \infty} (\mathcal{L}(\frac{\pi}{4}+\frac{\pi}{2n}) + \mathcal{L}(\frac{\pi}{4} - \frac{\pi}{2n})) = 2\mathcal{L}(\frac{\pi}{4}) = \frac{v_{oct}}{4}$.
\end{proof}

Now, we prove Theorem \ref{thm:noHS}. In this proof, any symmetries or hidden symmetries will be assumed to be orientation-preserving. In what follows, a hyperbolic $3$-orbifold has a  \textit{rigid cusp} if it has a cusp whose cross section is  of the form $\mathbb{S}^{2}(2,4,4)$, $\mathbb{S}^{2}(3,3,3)$, or $\mathbb{S}^{2}(2,3,6)$. Likewise, a cusp of a hyperbolic $3$-orbifold is called a \textit{non-rigid cusp} if a cross section of this cusp is topologically either a torus or $\mathbb{S}^{2}(2,2,2,2)$.

\begin{proof}[Proof of Theorem \ref{thm:noHS}] 
 By Margulis's Theorem, we know that for any non-arithmetic $M_{n}$, there exists a unique minimal (orientation-preserving) orbifold in its commenusrability class, namely $\mathcal{O}^{+}_{n} = \mathbb{H}^{3} / C^{+}(\Gamma_{n})$. Let $Q^{+}_{n} = M_{n} / G^{+}_{n}$. If $M_{n}$ has any hidden symmetries or any symmetries beyond the ones contained in $G^{+}_{n}$, then $Q^{+}_{n} \neq \mathcal{O}^{+}_{n}$, and in particular, $Q^{+}_{n}$ is a non-trivial cover of $\mathcal{O}^{+}_{n}$. Since $|G^{+}_{n}| = 8n$, we have that $vol(Q^{+}_{n}) = \frac{Vol(M_{n})}{8n} = f(n)$. 	 Lemma \ref{lemma:volprops} implies that $vol(Q^{+}_{n}) < 0.915965$ for all non-arithmetic $M_{n}$. Since we are assuming $Q^{+}_{n}$ non-trivially covers $\mathcal{O}^{+}_{n}$, we have that $vol(\mathcal{O}^{+}_{n}) \leq \frac{vol(Q^{+}_{n})}{2} < 0.4579825$. We now consider two cases based on the cusp of $Q^{+}_{n}$. Note that, since $G^{+}_{n}$ contains a subgroup of symmetries exchanging all of the link components, the orbifold $Q^{+}_{n}$ only has one cusp. 
	
	\underline{Case 1}: Suppose the cusp of $Q^{+}_{n}$ is non-rigid. In this case, our volume bound guarantees that $\mathcal{O}^{+}_{n}$ is on the list of smallest volume (orientable) hyperbolic $3$-orbifolds with a non-rigid cusp highlighted in the work of Adams; see Corollary 4.2 and Lemma 7.1 of \cite{Adams1991}. In particular, all of these orbifolds are arithmetic. But $M_{n}$ is non-arithmetic and since arithmeticity is a commensurability invariant, this would imply that $\mathcal{O}^{+}_{n}$ is non-arithmetic, giving a contradiction.

	\underline{Case 2}: Suppose the cusp of $Q^{+}_{n}$ is rigid. In this case, the cusp field of $Q^{+}_{n}$ must be contained in $\mathbb{Q}(i)$ or $\mathbb{Q}(\sqrt{-3})$. The proof of Theorem \ref{mainthm1} shows that the invariant trace field of a non-arithmetic $M_{n}$ (which is the same as the cusp field of $M_{n}$) could be $\mathbb{Q}(i)$ or $\mathbb{Q}(\sqrt{-3})$ only if $n=6$. From here, we determine  $Sym^{+}(M_{6})$ via SnapPy and see that it has order $48$ (SnapPy actually determines the full symmetry group, which is order $96$). Since $|G_{6}| = 48$, we can conclude that $G_{6}$ contains all orientation-preserving symmetries of $M_{6}$. Now, suppose $M_{6}$ admits hidden symmetries. Then we have a non-normal cover of $\mathcal{O}^{+}_{6}$ by $Q^{+}_{6}$. Thus, $vol(\mathcal{O}^{+}_{6}) \leq \frac{vol(Q^{+}_{6})}{3} = f(6)/3 \approx  0.281928224$. However, this is impossible since this is smaller than the smallest volume for an orientable, one-cusped hyperbolic $3$-orbifold; see \cite{MM2012}. Thus, $M_{6}$ admits no hidden symmetries.
	
In conclusion, we must have that $Q^{+}_{n} = \mathcal{O}^{+}_{n}$, which implies that $M_{n}$ has no hidden symmetries and $G^{+}_{n} = Sym^{+}(M_{n})$. 
\end{proof}

There are a number of useful applications of Theorem \ref{thm:noHS}. First off, we can extend this same line of argument to determine $Sym(M_{n})$ and show that $M_{n}$ also admits no orientation-reversing hidden symmetries. 	Every $\mathcal{P}_{n}$ admits an orientation-reversing symmetry $\sigma$ given by reflection in the projection plane, and so, by Mostow--Prasad Rigidity, induces an orientation-reversing symmetry for $M_{n}$, which we shall also denote by $\sigma$. Let $G_{n}$ be the group generated by the elements of $G^{+}_{n}$ and $\sigma$.  This group has order $16n$. 

\begin{cor}
	\label{cor:fullsym}
	For any non-arithmetic $M_{n} = \mathbb{S}^{3} \setminus \mathcal{P}_{n}$, we have that $G_{n} = Sym(M_{n})$ and $M_{n}$ admits no hidden symmetries (both orientation-preserving and reversing). 
\end{cor}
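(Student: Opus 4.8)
The plan is to bootstrap from Theorem \ref{thm:noHS}, which already pins down all orientation-preserving symmetries of $M_{n}$ and rules out orientation-preserving hidden symmetries, and then to leverage the single orientation-reversing symmetry $\sigma$ (reflection in the projection plane) to upgrade both conclusions to the full isometry group. No new geometric input should be needed; the work is entirely group-theoretic bookkeeping with the orientation-preserving index-two subgroups.

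For the symmetry statement, I would argue as follows. Since $\sigma$ is orientation-reversing, $Sym^{+}(M_{n})$ sits with index exactly $2$ in $Sym(M_{n})$, and $\sigma$ represents the nontrivial coset. Theorem \ref{thm:noHS} gives $Sym^{+}(M_{n}) = G^{+}_{n}$, so $Sym(M_{n}) = G^{+}_{n} \cup \sigma G^{+}_{n} = \langle G^{+}_{n}, \sigma \rangle = G_{n}$; in particular $|G_{n}| = 16n$ as asserted in the setup.

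For the hidden-symmetry statement, recall that $\sigma$ lifts, by Mostow--Prasad rigidity, to an orientation-reversing element of $N(\Gamma_{n})$, so $N^{+}(\Gamma_{n})$ has index $2$ in $N(\Gamma_{n})$ and, using that same element of $C(\Gamma_{n}) \supseteq N(\Gamma_{n})$, $C^{+}(\Gamma_{n})$ has index $2$ in $C(\Gamma_{n})$. Theorem \ref{thm:noHS} gives $C^{+}(\Gamma_{n}) = N^{+}(\Gamma_{n})$. Now suppose $g \in C(\Gamma_{n}) \setminus N(\Gamma_{n})$ represents a hidden symmetry. If $g$ is orientation-preserving, then $g \in C^{+}(\Gamma_{n}) = N^{+}(\Gamma_{n}) \subseteq N(\Gamma_{n})$, a contradiction. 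If $g$ is orientation-reversing, then $g\sigma \in C^{+}(\Gamma_{n}) = N^{+}(\Gamma_{n}) \subseteq N(\Gamma_{n})$, and since $\sigma \in N(\Gamma_{n})$ this forces $g = (g\sigma)\sigma^{-1} \in N(\Gamma_{n})$, again a contradiction. Hence $C(\Gamma_{n}) = N(\Gamma_{n})$, so $M_{n}$ has no hidden symmetries of either orientation type.

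The only point deserving care — and the closest thing to an obstacle — is the coset bookkeeping in the a priori non-normal inclusion $N(\Gamma_{n}) \leq C(\Gamma_{n})$, together with the observation that composing a putative orientation-reversing hidden symmetry with the genuine orientation-reversing symmetry $\sigma$ produces an orientation-\emph{preserving} element that still lies outside $N(\Gamma_{n})$, so that the orientation-preserving case of Theorem \ref{thm:noHS} genuinely applies. A careless version of this step could mistakenly conclude the composite is an honest symmetry, so I would state explicitly that $g\sigma \notin N(\Gamma_{n})$ would fail precisely because $g \notin N(\Gamma_{n})$ while $\sigma \in N(\Gamma_{n})$.
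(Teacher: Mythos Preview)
Your argument is correct. The paper, however, takes a different route: it simply reruns the volume argument of Theorem \ref{thm:noHS} with $Q_n = M_n/G_n$ in place of $Q_n^+ = M_n/G_n^+$. Since $|G_n| = 16n$, the quotient $Q_n$ has volume $f(n)/2$, and if $Q_n$ were not the minimal orbifold in its commensurability class, the same dichotomy (non-rigid cusp $\Rightarrow$ Adams' list; rigid cusp $\Rightarrow$ cusp-field restriction) yields a contradiction exactly as before.

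Your approach is more elementary in that it requires no new geometric input at all: once Theorem \ref{thm:noHS} gives $C^+(\Gamma_n) = N^+(\Gamma_n)$ and $Sym^+(M_n) = G_n^+$, the existence of a single orientation-reversing element $\sigma \in N(\Gamma_n)$ forces $C(\Gamma_n) = C^+(\Gamma_n) \cup \sigma\, C^+(\Gamma_n) = N^+(\Gamma_n) \cup \sigma\, N^+(\Gamma_n) = N(\Gamma_n)$ by pure coset bookkeeping. This is cleaner and would work for any non-arithmetic manifold once the orientation-preserving case is settled and at least one orientation-reversing symmetry is known. The paper's approach, by contrast, is self-contained and does not need to invoke Theorem \ref{thm:noHS} as a black box, but at the cost of implicitly redoing the case analysis. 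Your final paragraph is a bit tangled in its phrasing---the point is simply that $g\sigma \in C^+(\Gamma_n) = N^+(\Gamma_n)$ and $\sigma \in N(\Gamma_n)$ together force $g \in N(\Gamma_n)$---but the underlying logic is sound.
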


\begin{proof}
 Consider the quotient $Q_{n} = M_{n} / G_{n}$.  If we suppose $Q_{n}$ is not the minimal orbifold in its respective commensurability class, then we can now apply the same argument used in the proof of Theorem \ref{thm:noHS}.
\end{proof}

Another nice application is the fact that we can now determine which pretzel FAL complements are commensurable with each other. The corollary given below describes this commensurability relation, and also, confirms Conjecture 6.2.6 from the work of Flint \cite{F2017}. 

\begin{cor}
	\label{cor:comm}
Suppose $M \in HTP(\mathcal{P}_{m})$ and $N \in HTP(\mathcal{P}_{n})$. Then $M$ and $N$ are commensurable if and only if $m =n$. 	

\end{cor}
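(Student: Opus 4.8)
The plan is to deduce Corollary \ref{cor:comm} from Theorem \ref{thm:noHS} (and its companion Corollary \ref{cor:fullsym}) together with the finiteness of invariant trace fields (Proposition \ref{prop:finitefields}) and the volume monotonicity from Lemma \ref{lemma:volprops}. First I would record that the ``if'' direction is already essentially known: if $m=n$, then $M\in HTP(\mathcal{P}_n)$ and $N\in HTP(\mathcal{P}_n)$, so by Proposition \ref{prop:RefOrbComm} both are commensurable with the reflection orbifold associated to $M_n$, hence commensurable with each other. So the content is the ``only if'' direction.

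For the ``only if'' direction, suppose $M\in HTP(\mathcal{P}_m)$ and $N\in HTP(\mathcal{P}_n)$ are commensurable. Since invariant trace field is a commensurability invariant and half-twist partners share invariant trace fields (Proposition \ref{prop:invtracfields}), we get $kM_m\cong kM_n$, i.e. $\Q(\cos(\pi/m)i)\cong\Q(\cos(\pi/n)i)$. By Proposition \ref{prop:finitefields} this already forces $m,n$ to lie in a finite set, but to pin down $m=n$ I would instead argue via minimal orbifolds. Handle the arithmetic cases first: by Theorem \ref{mainthm1} the only arithmetic $M_k$ occur for $k=3,4$, and since $kM_3=\Q(\sqrt{-1})\ne\Q(\sqrt{-2})=kM_4$ these two are not commensurable with each other, and no non-arithmetic $M_k$ is commensurable with an arithmetic one (arithmeticity is a commensurability invariant). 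So we may assume $M_m,M_n$ are both non-arithmetic, i.e. $m,n\ge 5$ (and $\ne$ the arithmetic values).

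For the non-arithmetic case, since $M$ is commensurable with $M_m$ (Proposition \ref{prop:RefOrbComm}) and $N$ with $M_n$, commensurability of $M$ and $N$ gives commensurability of $M_m$ and $M_n$. By Margulis's theorem there is a unique minimal orientable orbifold in this common commensurability class; by Theorem \ref{thm:noHS}, $M_m$ has no hidden symmetries, so this minimal orbifold is $\mathcal{O}^+_m = M_m/G^+_m = Q^+_m$, and likewise for $M_n$ it is $Q^+_n$. By uniqueness, $Q^+_m$ and $Q^+_n$ are isometric, hence have equal volume: $f(m)=f(n)$. By Lemma \ref{lemma:volprops}, $f$ is strictly increasing on integers $>2$, so $m=n$. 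Finally, for $M\in HTP(\mathcal{P}_m)$, $N\in HTP(\mathcal{P}_n)$ with $m,n\ge 5$: $M$ commensurable with $N$ implies $M_m$ commensurable with $M_n$ implies $m=n$, as just shown; conversely $m=n$ is covered by the ``if'' direction. The main obstacle is making the uniqueness-of-minimal-orbifold step airtight across the commensurability class passing through half-twist partners rather than the $M_n$ themselves, but this is handled entirely by Proposition \ref{prop:RefOrbComm}, which puts every member of $HTP(\mathcal{P}_k)$ into the same class as $M_k$; no new geometric input is needed beyond what is already established.
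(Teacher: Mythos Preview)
Your proof is correct and follows essentially the same route as the paper: the ``if'' direction via Proposition \ref{prop:RefOrbComm}, the arithmetic cases distinguished by invariant trace fields, the mixed case by invariance of arithmeticity, and the non-arithmetic case by comparing volumes $f(m)$, $f(n)$ of the unique minimal orbifolds (using Theorem \ref{thm:noHS} and Lemma \ref{lemma:volprops}). The brief detour through Proposition \ref{prop:finitefields} is unnecessary but harmless, and your remark about passing from half-twist partners to the $M_k$ via Proposition \ref{prop:RefOrbComm} is exactly how the paper handles it.
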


\begin{proof} 
First off, Proposition \ref{prop:RefOrbComm} implies that if $m=n$, then $M$ and $N$ are commensurable. For the other direction, we break this proof down into a few cases, depending on whether or not $M_{n}$ and $M_{m}$ are arithmetic.

	\underline{Case 1}: Suppose $M_{n}$ and $M_{m}$ are non-arithmetic. By Margulis, there exists a unique minimal (orientable) hyperbolic $3$-orbifold  $\mathcal{O}^{+}_{n} = \mathbb{H}^{3} / C^{+}(\Gamma_{n})$ in the commensurability classes of $M_{n} = \mathbb{H}^{3} / \Gamma_{n}$. Likewise, we have that $\mathcal{O}^{+}_{m} =\mathbb{H}^{3} / C^{+}(\Gamma_{m}) $ is the minimal orbifold for $M_{m}$. By Theorem \ref{thm:noHS}, we know that $\mathcal{O}^{+}_{n}$ is just the quotient $M_{n} / G^{+}_{n}$, where $|G^{+}_{n}| = 8n$. Thus, $vol(\mathcal{O}^{+}_{n}) = \frac{vol(M_{n})}{8n} = f(n)$ and $vol(\mathcal{O}^{+}_{m}) = \frac{vol(M_{m})}{8m} = f(m)$. By Lemma \ref{lemma:volprops}, we know that $f(n)$ is strictly increasing, and so, we have that $vol(\mathcal{O}^{+}_{n}) \neq vol(\mathcal{O}^{+}_{m})$, whenever $n \neq m$. Thus, $\mathcal{O}^{+}_{n}$ and $\mathcal{O}^{+}_{m}$ are non-isometric, whenever $n \neq m$. Since this minimal orbifold is unique for each non-arithmetic commensurability class, this implies that $M_{n}$ and $M_{m}$ are not commensurable if $n \neq m$.

	\underline{Case 2}: Now, suppose $M_{n}$ and $M_{m}$ are both arithmetic. Then from Theorem \ref{mainthm1}, we know that $m, n \in \left\lbrace 3,4 \right\rbrace$.  Proposition \ref{prop:invtracfields} implies that $M_{3}$ and $M_{4}$ have different invariant trace fields, and so, they must belong to different commensurability classes. 
	
	\underline{Case 3}: Suppose $M_{n}$ is arithmetic, while $M_{m}$ is non-arithmetic. Since arithmeticity is a commensurability invariant, $M_{n}$ is not commensurable with $M_{m}$. 
	
	Thus, we have that $M_{n}$ is not commensurable to $M_{m}$, whenever $m \neq n$. The extension to half-twist partners follows from Proposition \ref{prop:RefOrbComm}.
\end{proof}

\section{Half-twist partners with many hidden symmetries}
\label{sec:HTPwithHS}

Here, we will analyze a special subclass of half-twist partners of pretzel FALs. Consider the pretzel FAL $\mathcal{P}_{n}$ and build it's half-twist partner $\mathcal{P}'_{n} = \mathcal{P}_{n}(0, 1, 1, 1, \ldots, 1)$. See the left side of Figure \ref{fig:HTPartner4} for the pretzel FAL diagram of $\mathcal{P}'_{5}$. In general, $\mathcal{P}'_{n}$ has $n$ crossing circles and $1$ knot circle. In addition, there is always exactly one untwisted crossing circle and $n-1$ twisted crossing circles in $\mathcal{P}'_{n}$. Set $M_{n}' = \mathbb{S}^{3} \setminus \mathcal{P}_{n}'$.

\begin{figure}[ht]
	\centering
	\begin{overpic}[width = \textwidth]{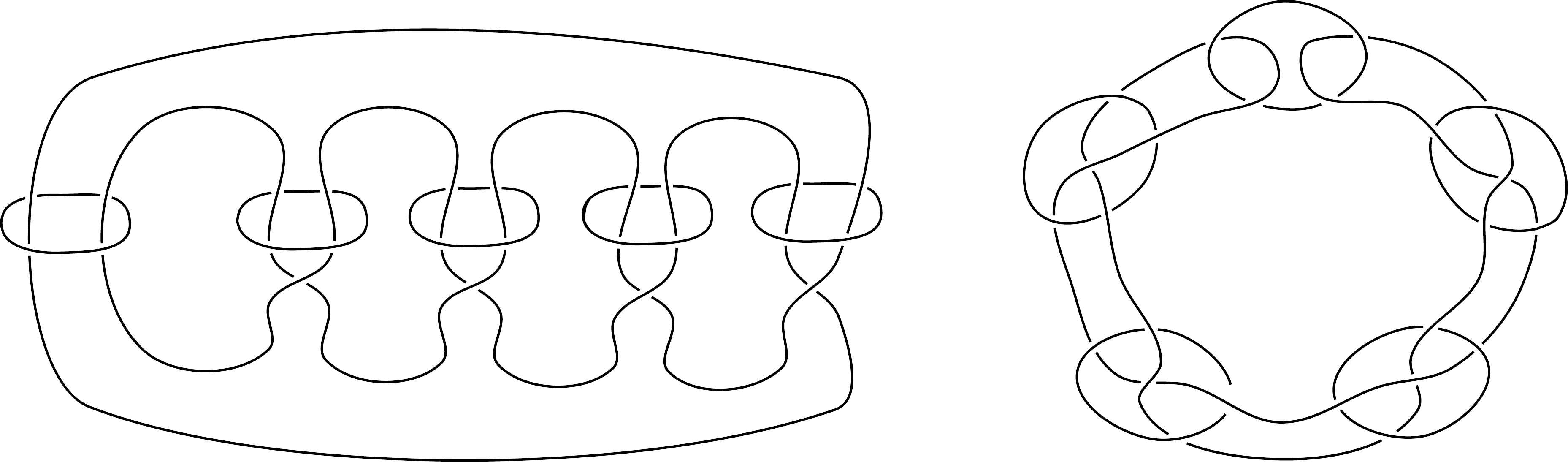}
		\put(59,15){\LARGE{$\cong$}}
	
	\end{overpic}
	\caption{Two link diagrams of $\mathcal{P}'_{5}$.}
	\label{fig:HTPartner4}

\end{figure}

Our main goal for this section is to show that each non-arithmetic $M_{n}'$ admits exactly $2n$ hidden symmetries, allowing us to construct non-arithmetic hyperbolic link complements with as many hidden symmetries as we would like, by taking $n$ sufficiently large.  To accomplish this goal, we first determine $Sym(M_{n}')$.  Similar to Section \ref{sec:SymandHS}, we start by identifying a visually obvious subgroup of symmetries of $(\mathbb{S}^{3}, \mathcal{P}_{n}')$ and then work to show this group must generate the full symmetry group $Sym(M_{n}')$. However, unlike in Section \ref{sec:SymandHS}, we can no longer use the fact that the quotient of $M_{n}'$ by this visually obvious subgroup of symmetries gives a small volume orbifold. Instead, we will analyze how cusps intersect certain totally geodesic surfaces in $M_{n}'$ in order to limit the number of symmetries of $M_{n}'$. From here, we can indirectly use our work from Section \ref{sec:SymandHS} to determine the number of hidden symmetries of $M_{n}'$.

By examining the diagram on the right side of Figure \ref{fig:HTPartner4}, we can see that $(\mathbb{S}^{3}, \mathcal{P}_{n}')$ admits three order two symmetries: $180^{\circ}$ rotation about the circular axis cutting through all of the half-twists (similar to the symmetry $\beta$ in Figure \ref{fig:SymDiag}), $180^{\circ}$ rotation about the line $L$ going through the middle of the untwisted crossing circle and the center of this ring of links, and the reflection in the vertical plane containing $L$.

\begin{thm}
	\label{thm:smallsymgroup}
	$Sym(M_{n}') \cong \mathbb{Z} / 2\mathbb{Z} \times \mathbb{Z} / 2\mathbb{Z} \times \mathbb{Z} / 2\mathbb{Z}$, generated by the symmetries mentioned above. 
\end{thm}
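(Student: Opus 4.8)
The plan is to bound $|Sym(M_n')|$ from above by $8$, which combined with the obvious inclusion $(\mathbb{Z}/2\mathbb{Z})^3 \leq Sym(M_n')$ already produced by the three visible involutions gives the result once I check that these three involutions generate a group of order exactly $8$ (they commute and are independent, so this is a short verification). The orientation-reversing reflection, together with the two rotations, all have order two and pairwise commute, so the visible subgroup $V \cong (\mathbb{Z}/2\mathbb{Z})^3$; the entire content is the upper bound. I would first record the geometric decomposition features of $M_n'$ coming from Section~\ref{subsec:CA}: $M_n'$ has one untwisted crossing-circle cusp $C_0$ (with rectangular cusp shape $2\cos(\pi/n)i$ as in Proposition~\ref{prop:CuspShape}), $n-1$ twisted crossing-circle cusps (cusp shape $\tfrac{2\cos(\pi/n)i}{1\pm\cos(\pi/n)i}$ by Lemma~\ref{lem:twistedtiles}), and one knot-circle cusp. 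Since a symmetry must permute cusps preserving cusp shape, and the untwisted cusp shape differs from the twisted one (a quick check: the twisted shape is not purely imaginary, the untwisted one is), any symmetry fixes $C_0$ and fixes the knot-circle cusp, and merely permutes the $n-1$ twisted cusps among themselves.

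**Key steps, in order.** (1) Identify the totally geodesic surfaces available: each crossing circle bounds a crossing disk, a totally geodesic twice-punctured sphere (thrice-punctured sphere in the complement), and the projection plane gives a totally geodesic reflection surface; these are the rigid objects a symmetry must permute. The crucial one is the crossing disk $D_0$ bounded by the \emph{untwisted} crossing circle $c_0$. (2) Show a symmetry $\varphi$ is determined by its action near $C_0$: restrict $\varphi$ to the boundary torus $\partial C_0$. The meridian of $C_0$ lies in the reflection surface and the longitude lies in the shaded (crossing-disk) faces, as noted in the Remark after Proposition~\ref{prop:CuspShape}. Because the rectangle is not square ($2\cos(\pi/n)\neq 1$ for $n\geq 3$ after normalization, indeed $2\cos(\pi/n)>1$ for $n\geq 3$), the Euclidean symmetry group of the cusp torus that is realized by symmetries of $M_n'$ preserving the meridian/longitude distinction is exactly $(\mathbb{Z}/2\mathbb{Z})^2$ (generated by the two reflections of a non-square rectangle). (3) Show that the map $\varphi \mapsto \varphi|_{\partial C_0}$, landing in this $(\mathbb{Z}/2\mathbb{Z})^2$, has kernel of order at most $2$: an element acting trivially on $\partial C_0$ either is the identity or is the reflection $\sigma$ in the projection plane (the one symmetry that fixes $\partial C_0$ pointwise up to isotopy on the relevant side), using that a symmetry fixing a cusp and acting trivially there is determined by rigidity once we know its behavior on the adjacent geodesic pieces — here the crossing disk $D_0$, which $\varphi$ must send to itself, and the reflection surface. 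Combining (2) and (3): $|Sym(M_n')| \leq 2 \cdot 4 = 8$.

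**Main obstacle.** The hard part will be step (3) — ruling out "extra" symmetries that act trivially or by a rotation on the untwisted cusp $C_0$ but do something nontrivial deeper in the manifold (for instance, permuting the $n-1$ twisted cusps nontrivially). The projection plane and the crossing disks are totally geodesic, hence preserved setwise by any symmetry, so I would argue that the combinatorial pattern of how the twisted crossing disks attach along the reflection surface (a linear chain, visible in Figure~\ref{fig:HTPartner4} and encoded in the crushtacean of Figure~\ref{fig:Crushtacean}) rigidifies the picture: the only nontrivial automorphism of this chain fixing the distinguished untwisted slot is the order-two flip, which is realized by the visible rotation about $L$. Making this rigorous requires showing that a symmetry which fixes every cusp (hence acts on each totally geodesic crossing disk) and fixes the reflection surface must be the identity or $\sigma$; this is where one invokes that a hyperbolic $3$-manifold symmetry fixing a totally geodesic surface and a transverse cusp direction is determined, via Mostow--Prasad rigidity, by finitely much combinatorial data, and then one checks that data is rigid for the chain. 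I expect the cleanest route is to combine the cusp-shape constraint at $C_0$ with the observation that the longitude of $C_0$ meets the crossing disk $D_0$, so $\varphi|_{\partial C_0}$ determines $\varphi|_{D_0}$, and $D_0$ together with the reflection plane cuts $M_n'$ into pieces on which the remaining freedom is only the chain flip.
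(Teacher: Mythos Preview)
Your overall strategy---bound $|Sym(M_n')|\le 8$ by restricting a symmetry to the distinguished cusp $C_0$---is exactly the paper's. But the execution diverges at your step~(3), and that is where the real gap lies.

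First, a concrete error: the reflection $\sigma$ in the projection plane is \emph{not} in the kernel of the map you describe. The meridian of $C_0$ lies in the projection plane (so $\sigma$ fixes $[\mu]$), but $\sigma$ is orientation-reversing and the longitude lies in the crossing disk perpendicular to the plane; hence $\sigma_*[\lambda]=-[\lambda]$. So $\sigma$ lands nontrivially in your $(\mathbb{Z}/2)^2$, and your ``kernel $\le 2$'' argument, as stated, is built on a false premise. What you would actually need to rule out in the kernel are symmetries acting by \emph{translations} on $\partial C_0$ (a flat torus has a full torus of isometries, not just $(\mathbb{Z}/2)^2$), and your sketch via ``combinatorial rigidity of the chain'' does not address this.

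The paper sidesteps the kernel issue entirely with a cleaner factorization. It shows the restriction $f:Sym(M_n')\to Sym(C)$ to \emph{isometries of the cusp} (not just the action on $H_1$) is \emph{injective}: if $\rho|_C=\mathrm{id}$, then $\rho$ fixes a point and a tangent frame, hence $\rho=\mathrm{id}$ by a standard rigidity fact (Benedetti--Petronio). The entire burden then shifts to bounding the image, and for that the paper proves two lemmas you do not have: that the crossing disk $D$ is preserved setwise (via an area argument ruling out a parallel totally geodesic plane through $C_0$), and that the reflection surface $W$ is preserved. These pin the actual curves $\lambda$ and $\mu$ (not just their classes) to at most $2\times 4=8$ images. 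Your plan correctly identifies $D$ and $W$ as the relevant objects but does not supply the argument that they are individually preserved; the paper's area computation for $D$ is the nontrivial step you are missing.
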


The proof of Theorem \ref{thm:smallsymgroup} requires some technical lemmas describing how symmetries of $M_{n}'$ could act on its cusps. We now proceed to state and prove these lemmas before returning to the proof of Theorem \ref{thm:smallsymgroup}. 

By abuse of notation, let $C$ denote both the untwisted crossing circle of $\mathcal{P}_{n}'$ and the corresponding cusp of $M_{n}'$.

\begin{lemma}
	\label{lem:symMaptoSelf}
	Let $\rho \in Sym(M_{n}')$. Then $\rho$ maps the  cusp $C$ to itself.
\end{lemma}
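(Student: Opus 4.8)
The plan is to distinguish the untwisted crossing-circle cusp $C$ from every other cusp of $M_n'$ by a geometric invariant that any isometry must preserve. The natural candidate is the cusp shape, computed in Proposition \ref{prop:CuspShape} and Lemma \ref{lem:twistedtiles}: a \emph{maximal} cusp neighborhood need not have the bare similarity class of $\partial C$ as an isometry invariant, but the commensurability/isometry type of the \emph{unoriented} cusp cross-section together with the induced Euclidean structure (once cusps are expanded in the canonical, isometry-equivariant way) is preserved by every element of $\mathrm{Sym}(M_n')$. So first I would recall that $M_n'$ has $n$ twisted crossing-circle cusps, each with cusp shape $\tfrac{2\cos(\pi/n)i}{1\pm\cos(\pi/n)i}$ by Lemma \ref{lem:twistedtiles}, one untwisted crossing-circle cusp $C$ with cusp shape $2\cos(\pi/n)i$ by Proposition \ref{prop:CuspShape}, and one knot-circle cusp whose shape can be read off the polyhedral decomposition of Theorem \ref{thm:FALstructure} (its boundary torus is tiled by white faces only, in a pattern different from the crossing-circle cusps). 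The key point is that $2\cos(\pi/n)i$ is purely imaginary while $\tfrac{2\cos(\pi/n)i}{1\pm\cos(\pi/n)i}$ is not, so $C$ is not isometric to any twisted crossing-circle cusp; and a separate check shows the knot-circle cusp is likewise not of this shape (e.g. its modulus or the combinatorics of its tiling differs). Hence $C$ is the unique cusp in its isometry class.

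Once $C$ is singled out as the only cusp with its particular (similarity class of) cross-section, the conclusion is immediate: if $\rho\in\mathrm{Sym}(M_n')$ then $\rho$ permutes the cusps of $M_n'$ and carries each cusp to an isometric one, so $\rho$ must send $C$ to the unique cusp isometric to it, namely $C$ itself. I would make the equivariance of the cusp-expansion precise by invoking Mostow--Prasad rigidity: realize $\rho$ as an isometry of the complete hyperbolic structure, use the canonical (e.g.\ maximal, or Epstein--Penner) system of horoball neighborhoods which is preserved by all isometries, and note that the induced Euclidean similarity structure on $\partial C$ is then a genuine isometry invariant of the cusp.

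The main obstacle I anticipate is the bookkeeping needed to pin down the knot-circle cusp shape and confirm it is distinct from $2\cos(\pi/n)i$ — the crossing-circle cusp computations are already in hand from Section \ref{subsec:CA}, but the knot-circle cusp requires a short separate argument from the white-face tiling in Figure \ref{fig:polydecomp}, and one must be a little careful that for small $n$ no accidental coincidence of shapes occurs (the hypothesis that $M_n'$ is non-arithmetic, i.e.\ effectively $n\ge 5$ in the relevant range, should be used here, mirroring the standing assumption in this section). A clean alternative that sidesteps the knot-cusp computation entirely is to distinguish $C$ from the knot-circle cusp topologically rather than geometrically: $C$ bounds an embedded twice-punctured disk (a crossing disk) in $M_n'$, and the crossing circles are exactly the components with that property among the link $\mathcal{P}_n'$; combined with the cusp-shape argument separating $C$ from the twisted crossing-circle cusps, this isolates $C$ without any further trigonometry. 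I would likely present the cusp-shape argument as the main line and remark on the topological shortcut.
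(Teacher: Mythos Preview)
Your approach is essentially the paper's: distinguish $C$ from the other cusps by cusp shape, using Proposition~\ref{prop:CuspShape} and Lemma~\ref{lem:twistedtiles} for the crossing-circle cusps. (Minor miscount: $\mathcal{P}_n'=\mathcal{P}_n(0,1,\dots,1)$ has $n-1$ twisted crossing circles, not $n$.) The one place where you defer the work is exactly where the paper supplies it: for the knot-circle cusp the paper does the modulus comparison you gesture at, invoking \cite[Lemmas~2.3,~2.6]{FP2007} to see that the single knot circle passes through all $n$ crossing circles twice, so its longitude has length at least $2n$ against a meridian of length~$2$, giving modulus $\ge n$; since $C$ has modulus $2\cos(\pi/n)<2$, no symmetry can exchange them. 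So your ``modulus'' option is precisely the right one, and it costs only one sentence once you cite the Futer--Purcell tiling description.

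Your proposed topological shortcut, by contrast, is not safe as stated. Saying ``the crossing circles are exactly the components bounding a twice-punctured disk in $\mathcal{P}_n'$'' is a statement about the pair $(\mathbb{S}^3,\mathcal{P}_n')$, but the lemma concerns $\mathrm{Sym}(M_n')$, and you do not yet know that symmetries of $M_n'$ come from symmetries of the pair---indeed that is part of what Theorem~\ref{thm:smallsymgroup} ultimately establishes. To make the shortcut intrinsic you would need to show there is no embedded (hence totally geodesic) thrice-punctured sphere in $M_n'$ having the knot-circle cusp as one of its ends, which is a nontrivial claim in its own right. Stick with the modulus argument.
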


\begin{proof}
A symmetry $\rho$ can not map $C$ to a cusp coming from a crossing circle with half-twists since they have different cusp shapes; see Proposition \ref{prop:CuspShape} and Lemma \ref{lem:twistedtiles} for cusp shape descriptions. Now, we just need to show $C$ can not map to the knot circle cusp. A nice description of how to build the boundary torus of a knot circle cusp for a FAL complement is given in Lemma 2.3 and Lemma 2.6 of \cite{FP2007}. In particular, the larger the number of crossing circles a knot circle goes through, the longer the longitude of this cusp (relative to its meridian). In our case, our knot circle goes through all $n$ crossing circles twice, and so, a direct application of \cite[Lemma 2.6]{FP2007} shows that the length of the longitude of this knot circle cusp is at least $2n$, while its meridian is length exactly $2$ (for a particular horoball expansion).  At the same time, since the cusp shape of $C$ is $2\cos(\pi/n)i$, the ratio of the meridian to the longitude for $C$ is at most two-to-one.  Thus, $C$ could not map to the knot circle cusp. 
\end{proof}

Choose a cusp expansion for $M_{n}'$ and let $[\mu]$ and $[\lambda]$ denote the isotopy classes of the meridian and longitude, respectively, on the boundary torus $\partial C$. When we refer to the length of an isotopy class of a closed geodesic on $\partial C$, we mean the length of a geodesic representative. 

\begin{lemma}
	\label{lem:meridianslongitudes}
Let $\rho \in Sym(M_{n}')$. Then $\rho([\mu]) = \pm [\mu]$ and $\rho([\lambda]) = \pm [\lambda]$. 
\end{lemma}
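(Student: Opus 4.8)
The plan is to exploit the fact, established in Lemma \ref{lem:symMaptoSelf}, that every $\rho \in Sym(M_n')$ restricts to a homeomorphism of the boundary torus $\partial C$, hence induces an automorphism of $H_1(\partial C;\mathbb{Z}) \cong \mathbb{Z}^2$ preserving the cusp shape. Since $C$ is an \emph{untwisted} crossing circle cusp, its cusp shape (by Proposition \ref{prop:CuspShape}) is $2\cos(\pi/n)i$, i.e. the fundamental region is a rectangle with $[\mu]$ realized by one pair of sides and $[\lambda]$ by the other, orthogonal pair, of distinct lengths (length $1$ versus $2\cos(\pi/n)$ after normalizing, as noted in the Remark following Proposition \ref{prop:CuspShape}). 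The key point is that $\rho$ acts as an isometry of this flat torus (for the Mostow--Prasad-rigid choice of horocusp that is invariant under the whole symmetry group), and we must classify which automorphisms of $\mathbb{Z}^2$ can arise.

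First I would fix a maximal cusp neighborhood of $C$ invariant under $Sym(M_n')$ — this exists because $Sym(M_n')$ permutes the cusps and we may average/take the canonical cell decomposition, and in any case $\rho(C)=C$ so the maximal cusp of $C$ is canonically $\rho$-invariant. Then $\rho|_{\partial C}$ is an isometry of the Euclidean torus $\mathbb{R}^2/\Lambda$ where $\Lambda$ is generated by a horizontal vector of length $\ell_\mu$ (the meridian, lying in the reflection surface along the white sides) and an orthogonal vector of length $\ell_\lambda$ (the longitude, lying in the shaded faces), with $\ell_\mu \neq \ell_\lambda$. Any such isometry fixing the origin (up to translation, which is irrelevant on homology) must send the set of shortest nonzero lattice vectors to itself and the next-shortest to itself; since $\ell_\mu \neq \ell_\lambda$, these two length classes are exactly $\{\pm[\mu]\}$ and $\{\pm[\lambda]\}$ (in some order), so $\rho$ cannot mix them and must send $[\mu] \mapsto \pm[\mu]$ and $[\lambda]\mapsto\pm[\lambda]$. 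Concretely: a homology automorphism preserving the rectangular shape is a signed permutation matrix, and the swap is excluded precisely because $\cos(\pi/n)\neq \tfrac12$ for $n\geq 3$ would need to be checked — actually $2\cos(\pi/n) = 1$ would force $n=3$, so for $n=3$ one needs a separate remark, but $\mathcal{P}_3'$ has twisted structure making $C$ still distinguishable, or one simply notes $n\geq 5$ is the regime of interest; I would state the lemma for the relevant range or add the caveat.

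The main obstacle I anticipate is making rigorous the claim that $\rho$ acts on $\partial C$ \emph{by a Euclidean isometry} with respect to the flat structure in which $[\mu]$ and $[\lambda]$ are genuinely orthogonal of different lengths — one must pick the horocusp so that $\rho$ preserves it, and argue the induced map on the torus is a similarity (automatic, since $\rho$ is a hyperbolic isometry of $M_n'$ and cross-sections of a cusp inherit a canonical similarity class) and in fact an isometry once the horocusp is $\rho$-fixed. A secondary subtlety is ruling out the transposition $[\mu]\leftrightarrow[\lambda]$: this requires knowing $\ell_\mu \neq \ell_\lambda$, which follows from the explicit cusp shape $2\cos(\pi/n)i$ being non-square (equivalently $\cos(\pi/n) \neq \tfrac12$, i.e. $n \neq 3$), so for $n\geq 4$ the argument is clean and for $n=3$ — if it must be covered — one invokes instead that $\mu$ lies in the reflection surface while $\lambda$ does not, so they are topologically distinguished inside $M_n'$ and no symmetry can interchange them. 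I would phrase the proof to lead with this last topological observation, since it is shape-independent: the meridian of $C$ bounds a disk (the crossing disk's boundary behavior) and lies in the fixed-point set of the reflection $\sigma$, whereas the longitude does not, so $\rho$ must preserve each up to sign.
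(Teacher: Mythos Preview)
Your proposal is correct and takes essentially the same approach as the paper: the paper's proof likewise uses that $\rho$ restricts to an isometry of the flat torus $\partial C$, then invokes the explicit inequalities $1 < |2\cos(\pi/n)| < 2$ to conclude that $[\mu]$ (length $1$) and $[\lambda]$ (length $2\cos(\pi/n)$) are each the unique primitive classes of their length, hence each is sent to $\pm$ itself. The paper is simply terser---it does not discuss invariant horocusps, the $n=3$ edge case, or your alternative topological argument via the reflection surface---but the core length-spectrum reasoning is identical.
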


\begin{proof}
 Given $\rho \in Sym(M_{n}')$, we know that $\rho$ maps $C$ to $C$ from Lemma \ref{lem:symMaptoSelf}. Since $\rho$ is an isometry, it must map geodesics on the torus $\partial C$ to  geodesics on $\partial C$ with the same length. Choose the cusp expansion for $C$ so that $[\mu]$ has length $1$ and $[\lambda]$ has length $|2\cos(\pi/n)|$, as done in the proof of Proposition \ref{prop:CuspShape}. Recall that all geodesics on the torus $\partial C$ are of the form $k_{1} \cdot \pm [\mu] + k_{2} \cdot \pm [\lambda]$ for some integers $(k_{1}, k_{2})  \neq (0,0)$.  Since $1 < |2\cos(\pi/n)|$,  we must have  $\rho([\mu]) = \pm [\mu]$.  Similarly, since $1 < |2\cos(\pi/n)|<2$, we must have that $\rho([\lambda]) = \pm [\lambda]$. 
\end{proof}

The above lemma only tells us that given any fixed cusp expansion, the isotopy classes of the meridian and longitude must map to themselves.  We would now like to place stronger restrictions on where particular geodesic representatives for $[\mu]$  and $[\lambda]$ could be mapped to under a symmetry of $M_{n}'$.

Now let $D$ be the untwisted crossing disk that $C$ bounds, and let $W$ be the reflection surface in $M'_n$ resulting from gluing the white faces of $P_{\pm}$.  We wish to show $D$ and $W$ are each fixed set-wise by $Sym(M'_n)$.

The main tool is an analysis of intersecting embedded totally geodesic surfaces, particularly when one of them is a thrice-punctured sphere.  Some preliminary observations are in order.  An embedded totally geodesic surface in a hyperbolic 3-manifold lifts to a union of disjoint planes in the universal cover.  Consequently, embedded totally geodesic surfaces intersect in a collection of pairwise disjoint simple geodesics.  There are six simple geodesics on a thrice-punctured sphere, three joining distinct cusps (\emph{intercusp} geodesics labeled $a, b, c$ in Figure \ref{fig:FixedD}$(a)$) and three from a cusp to itself (\emph{intracusp} geodesics  labeled $x, y, z$ in Figure \ref{fig:FixedD}$(a)$).  Figure \ref{fig:FixedD}$(b)$ displays these geodesics on a lift $\widetilde{D}$ to a fundamental region for $D$ in $P_+\cup P_-$.   Any embedded totally geodesic surface that intersects $D$ must intersect $D$ in a pairwise disjoint subset of these geodesics.

\begin{center}
\begin{figure}[h]
\[
\begin{array}{cc}
\includegraphics[width=1.75in]{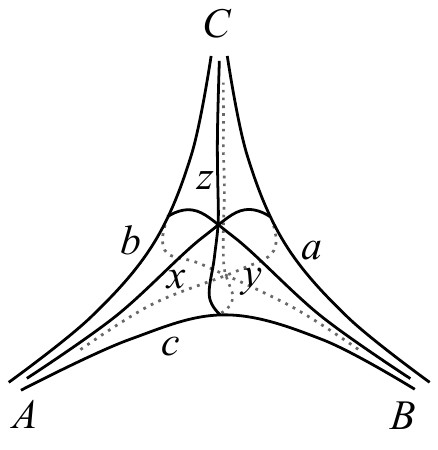}&\includegraphics[width=1.75in]{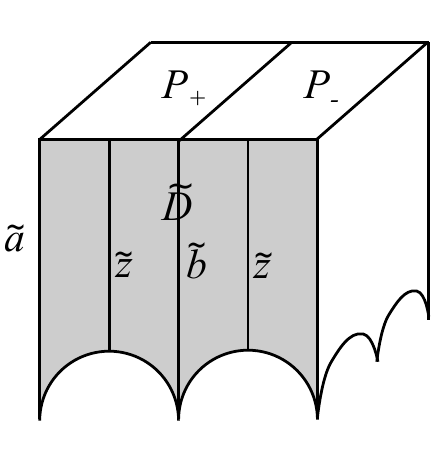}\\
(a)\textrm{ Geodesics on }D & (b)\textrm{ Lift }\widetilde{D}\textrm{ of }D
\end{array}
\]
\[
\begin{array}{c}
\includegraphics[width=2.1in]{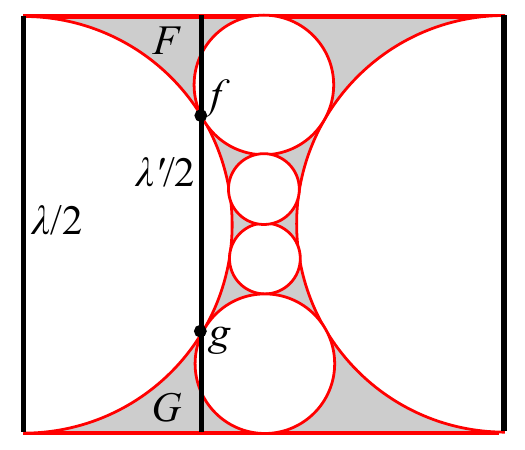}\\
(c)\textrm{ View of }\ P_+\textrm{ from }C
\end{array}
\]
\caption{Totally geodesic surfaces}
\label{fig:FixedD}
\end{figure}
\end{center}

Further, any thrice-punctured sphere can be decomposed into two ideal triangles by slicing along intercusp geodesics.  Let $T$ denote such a triangle, then the simple geodesics of the thrice-punctured sphere intersect $T$ in one of two ways.  By construction, the edges of $T$ correspond to intercusp geodesics.  The intracusp geodesics are \emph{midpoint rays}, i.e. hyperbolic rays perpendicular to one side of $T$ and pointing toward the opposite vertex.  The intracusp geodesic labeled $\tilde{z}$ in Figure \ref{fig:FixedD}$(b)$ demonstrates this phenomenon.  Since $T$ is a subset of the thrice-punctured sphere, any embedded totally geodesic surface intersects the triangle $T$ in a pairwise disjoint collection of edges and midpoint rays.  This significantly restricts how lifts of these surfaces intersect in the universal cover, a fact which we will use to our advantage.

The fundamental region $P_+ \cup P_-$ can be chosen so that the cusp corresponding to the untwisted crossing circle $C$ is at infinity.  In this case, the shaded sides are standard fundamental regions for the thrice punctured sphere $D$, and the simple geodesics on $D$ adjacent to $C$ lift to those labeled $\tilde{a}$, $\tilde{b}$ and $\tilde{z}$ in Figure \ref{fig:FixedD}$(b)$ (the intracusp geodesic is the union of the two geodesic rays labeled $\tilde{z}$).

\begin{lemma}\label{lem:FixedD}
The crossing disk $D$ is a fixed set of any symmetry of $M'_n$.
\end{lemma}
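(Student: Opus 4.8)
The plan is to fix $\rho\in Sym(M'_n)$ and show $\rho(D)=D$ by studying the intersection $D\cap\rho(D)$ of totally geodesic surfaces in the universal cover, exactly the set-up recalled before the statement (embedded totally geodesic surfaces meet in disjoint simple geodesics; a thrice-punctured sphere has only its six simple geodesics; each ideal triangle in the triangle decomposition is met only in edges and midpoint rays). First I would record the two inputs that get the argument started: by Lemma~\ref{lem:symMaptoSelf}, $\rho$ fixes the cusp $C$, so $\rho(D)$ is again an embedded totally geodesic thrice-punctured sphere having a cusp at $C$; and by the Remark after Proposition~\ref{prop:CuspShape} the curve $D\cap\partial C$ is the longitude $[\lambda]$, so by Lemma~\ref{lem:meridianslongitudes} the curve $\rho(D)\cap\partial C$ has the same slope $\pm[\lambda]$. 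Assume for contradiction $\rho(D)\neq D$; then $D\cap\rho(D)$ is a (possibly empty) disjoint union of complete simple geodesics, each of which is simultaneously one of the six simple geodesics of $D$ and one of the six simple geodesics of $\rho(D)$.

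Next I would lift to $\mathbb{H}^3$ placing $C$ at $\infty$, normalized so that the meridian and longitude of $C$ act by the horizontal translations used in Section~\ref{subsec:CA}; the components of the preimage of $D$ adjacent to $\infty$ are then vertical Euclidean half-planes lying over lines in the longitude direction on the horospherical torus, and the ``shaded'' fundamental region $\widetilde D$ of Figure~\ref{fig:FixedD}$(b)$, carrying the geodesics $\tilde a,\tilde b,\tilde z$ toward $C$, sits inside one such plane. Because the two cusp slopes at $C$ agree, the components of the preimage of $\rho(D)$ adjacent to $\infty$ are vertical half-planes over \emph{parallel} lines. Hence all these planes are mutually parallel: any two are disjoint or equal, so no geodesic of $D\cap\rho(D)$ can be incident to $C$ unless a lift of $D$ near $\infty$ actually coincides with a lift of $\rho(D)$ near $\infty$.

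The core of the argument is then a dichotomy. If some lift of $D$ near $\infty$ equals a lift of $\rho(D)$ near $\infty$, one propagates this coincidence across the shaded-face gluings using the parabolic subgroup fixing $C$ together with the midpoint-ray structure (a lift of $\rho(D)$ meeting the vertical plane containing $\widetilde D$ must enter it along an edge or a midpoint ray of $\widetilde D$, and carrying the cusp slope $[\lambda]$ forces agreement on the adjacent triangles), concluding $\rho(D)=D$. If instead no lift near $\infty$ of the two surfaces coincides, then $D$ and $\rho(D)$ are disjoint; but two distinct embedded totally geodesic thrice-punctured spheres sharing the longitudinal cusp curve on $\partial C$ cannot be realized disjointly — equivalently, the crossing disk of the untwisted crossing circle is the unique such surface — again a contradiction. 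Either way $\rho(D)=D$.

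I expect the dichotomy's second horn — ruling out an ``exotic'' totally geodesic thrice-punctured sphere with a cusp at $C$ of slope $[\lambda]$ disjoint from $D$ — to be the main obstacle, since a priori $\rho(D)$ is only constrained at the cusp $C$ and could in principle behave differently at the two knot-circle cusps. The leverage comes entirely from the rigidity of the ideal-triangle decomposition: every embedded totally geodesic surface meets each triangle only in edges and midpoint rays, which severely limits how a lift of $\rho(D)$ can weave among the lifts of $D$, and pins down $\rho(D)$ once its behavior near $C$ is known. With Lemma~\ref{lem:FixedD} in hand, the companion statement that the white reflection surface $W$ is also $\rho$-invariant, and then the identification of $Sym(M'_n)$ in Theorem~\ref{thm:smallsymgroup}, follow by the same circle of ideas applied to $W$ and to the meridian slope.
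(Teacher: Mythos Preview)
Your setup matches the paper's: lift $C$ to $\infty$, note that $\widetilde{D}$ and any lift $\widetilde{D}'$ of $\rho(D)$ adjacent to $\infty$ are parallel vertical planes, and use the edge/midpoint-ray constraint on how $\widetilde{D}'$ can cross the shaded triangles of $P_+$. Your first horn is fine (indeed, if a single vertical plane lies in both preimages the surfaces coincide). The genuine gap is the second horn. The assertion that ``two distinct embedded totally geodesic thrice-punctured spheres sharing the longitudinal cusp curve on $\partial C$ cannot be realized disjointly'' --- equivalently, that $D$ is the \emph{unique} such surface --- is exactly the content of the lemma, and you have not supplied an argument for it. (Also, ``no lifts near $\infty$ coincide'' does not by itself force $D\cap\rho(D)=\emptyset$: they could still meet in the geodesic $c$ or the intracusp geodesics at $A,B$, none of which are adjacent to $C$.) You correctly flag this as the main obstacle, but ``the rigidity of the ideal-triangle decomposition\ldots pins down $\rho(D)$'' is a hope, not a proof.

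The paper resolves this not via a disjointness/uniqueness dichotomy but by an \emph{area} argument. If $\widetilde{D}'\ne\widetilde{D}$, the midpoint-ray constraint forces $\widetilde{D}'$ to pass through the ideal vertices $f,g$ of the adjacent shaded triangles (Figure~\ref{fig:FixedD}$(c)$); the piece $\widetilde{D}'_+=\widetilde{D}'\cap P_+$ is then an ideal polygon with three ideal vertices ($f$, $g$, and one at $C$) plus two finite vertices along its vertical edges, so by Gauss--Bonnet $\mathcal{A}(\widetilde{D}'_+)>\pi$. Doubling across the white face gives $\mathcal{A}(\widetilde{D}')>2\pi$, contradicting the fact that a thrice-punctured sphere has area exactly $2\pi$. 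This single computation replaces both horns of your dichotomy and is the step your proposal is missing.
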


\begin{proof}
Let $\rho$ be a symmetry of $M'_n$.
As above, place the cusp corresponding to $C$ at infinity in the universal cover $\widetilde{M}'_n$ of $M'_n$.  A horosphere centered at infinity intersects the fundamental region $P_+ \cup P_-$ in a rectangle $R$ comprised of two rectangular tiles (the tile in $P_+$ for $T_6$ is depicted in Figure \ref{fig:FixedD}$(c)$).

Now $\widetilde{D}\cap R$ forms a longitude $\lambda$ for the cusp $C$ (the curve $\lambda/2$ of Figure \ref{fig:FixedD}$(c)$ represents half a longitude).  By Lemmas \ref{lem:symMaptoSelf} and \ref{lem:meridianslongitudes}, we know $\rho$ fixes $C$ and must take $\lambda$ to a copy $\lambda'$ of $\pm\lambda$.  We let $\widetilde{D}'$ denote the lift of the thrice-punctured sphere $\rho(D)$ to $P_{\pm}$. Then $\widetilde{D}'$ intersects $R$ in $\lambda'$, a parallel copy of $\lambda$. Once we show $\lambda'= \pm\lambda$ we will have that $\widetilde{D} = \widetilde{D}'$, proving that $D=D'$.

Suppose $\widetilde{D} \ne \widetilde{D}'$. We begin by showing there is, up to symmetry, only one possible $\lambda'$ different from $\lambda$.  If $\widetilde{D}'$ is different from $\widetilde{D}$, then it lifts to a vertical plane through $R$, parallel to the shaded sides and perpendicular to the white sides.  The curve $\lambda'$ cannot go precisely through the middle of $R$, for then $D'$ would have too many punctures.  Thus $\widetilde{D}'$ would have to intersect the shaded triangles labeled $F, G$ in Figure \ref{fig:FixedD}$(c)$.  Now triangles $F, G$ project to (triangles in) thrice punctured spheres $S_F, S_G$ in $M'_n$.  By the remarks preceding the lemma, the curve $\widetilde{D}'\cap F$ must be an edge or a midpoint ray of $F$. This implies $\widetilde{D}' \cap F$ must go through the vertices labeled $f, g$ in Figure \ref{fig:FixedD}$(b)$. We now finish the contradiction by showing that area considerations prevent this case from happening.

We will show that for this $\lambda'$ the area $\mathcal{A}(\widetilde{D}')$ of $\widetilde{D}'$ is greater than $2\pi$ so that it can't be a thrice punctured sphere.  Let $\widetilde{D}'_+ = \widetilde{D}'\cap P_+$ is as in Figure \ref{fig:FixedD}$(c)$, and note that $\mathcal{A}(\widetilde{D}') \ge 2 \mathcal{A}(\widetilde{D}'_+)$ because there is an identical copy of $\widetilde{D}'_+$ in $P_-$, so it suffices to show $\mathcal{A}(\widetilde{D}'_+) > \pi$.  Since $\widetilde{D}'_+$ is a polygon, its area is $2\pi$ less than the sum of the external angles, where the external angle of an ideal vertex is $\pi$.  Now $\widetilde{D}'_+$ has three ideal vertices ($f$, $g$ and at the cusp $C$), and a finite vertex along each vertical edge of $\widetilde{D}'_+$.  Summing the external angles gives a value strictly greater than $3\pi$, proving that $\mathcal{A}(\widetilde{D}'_+) > \pi$.  Thus the assumption $\widetilde{D}'\ne \widetilde{D}$ is false, proving the lemma.
\end{proof} 

\begin{lemma}\label{lem:FixedW}
The reflection surface $W$ is a fixed set of any symmetry of $M'_n$.
\end{lemma}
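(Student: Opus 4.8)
The plan is to exploit Lemma \ref{lem:FixedD}, which says that any symmetry $\rho$ of $M'_n$ fixes the untwisted crossing disk $D$ as a set, together with the structural relationship between $D$ and the reflection surface $W$. The key observation is that $W$ meets $D$ in the intracusp geodesic of the thrice-punctured sphere $D$ that joins the cusp $C$ to itself (the geodesic lifting to $\widetilde{z}$ in Figure \ref{fig:FixedD}$(b)$), since the reflection surface $W$ passes through the white faces while $D$ is built from the shaded faces, and their intersection is precisely the arc of $D$ lying in the reflection plane. More generally, $W$ is the unique embedded totally geodesic surface in $M'_n$ that is a \emph{reflection surface}, i.e. whose preimage in $\widetilde{M}'_n$ is the fixed-point set of a reflection in the group $C(\Gamma'_n)$ extended by reflections. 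So the strategy is: first identify $W\cap D$ intrinsically, then show $\rho$ must carry $W\cap D$ to itself (using that $\rho$ fixes $D$ and permutes the simple geodesics on $D$ in a way constrained by Lemmas \ref{lem:symMaptoSelf} and \ref{lem:meridianslongitudes}), and finally conclude that $\rho(W)$ and $W$ are two totally geodesic surfaces through the same geodesic that must coincide.

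In more detail, I would argue as follows. Place the cusp $C$ at infinity as in the proof of Lemma \ref{lem:FixedD}, so that a horospherical cross-section is the rectangle $R$ of two rectangular tiles. The reflection surface $W$ lifts to the union of the white-face planes; locally near $C$ it lifts to the two vertical planes bounding $R$ on its white sides (the sides of length $\sec(\pi/n)$), together with their translates. Thus $\widetilde{W}\cap R$ consists of two parallel copies of the meridian $\mu$ of $C$ (the meridian lies in the reflection surface, as noted in the Remark after Proposition \ref{prop:CuspShape}). By Lemma \ref{lem:meridianslongitudes}, $\rho$ sends $[\mu]$ to $\pm[\mu]$; moreover by Lemma \ref{lem:FixedD} it fixes $D$, hence fixes $\widetilde{D}\cap R = \lambda$, the longitude. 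The two white-side lines of $R$ are distinguished among all translates of the meridian as exactly those meeting $\widetilde{D}$ in the endpoints of $\lambda$ — equivalently, they bound the fundamental rectangle together with $\widetilde{D}$ and its neighbor. Since $\rho$ preserves this configuration (it preserves $\widetilde{D}$, the cusp $C$, and the tiling of $R$ by the two tiles up to translation), $\rho$ must carry the pair of white-side lines to itself, so $\rho(\widetilde{W})$ agrees with $\widetilde{W}$ in a neighborhood of $C$. Two embedded totally geodesic planes in $\mathbb{H}^3$ that share more than a single geodesic must coincide; here they share an open piece of a plane, so $\rho(\widetilde{W}) = \widetilde{W}$, and descending to $M'_n$ gives $\rho(W) = W$.

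The main obstacle I anticipate is making rigorous the claim that $\rho$ preserves the specific pair of vertical white-side planes of $R$ rather than merely permuting the (doubly-infinite) family of meridional lines in the universal cover. This is where one genuinely needs that $\rho$ fixes $D$ setwise and acts compatibly on the polyhedral decomposition: the white faces adjacent to $C$ are exactly those whose planes are perpendicular to $\widetilde{D}$ and contain the two endpoints of a lift of the longitude $\lambda$, and Lemma \ref{lem:FixedD} pins down $\widetilde{D}$. One should be careful to phrase this in terms of the intrinsic geometry of the pair $(D, C)$ — e.g. $W\cap D$ is the unique simple closed geodesic on the thrice-punctured sphere $D$ running from $C$ to $C$ and lying in a reflection surface — so that the argument does not secretly assume what is being proved. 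Once that identification is in place, the rest is the standard rigidity fact that totally geodesic surfaces are determined by any open subsurface, and the proof concludes quickly.
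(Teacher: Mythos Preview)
Your overall strategy---use Lemma~\ref{lem:FixedD} to pin down $D$, then identify $W$ via $W\cap D$---is exactly the paper's, but your execution has two concrete errors.

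First, you misidentify $W\cap D$. The crossing disk $D$ is built from shaded faces and $W$ from white faces; they meet along edges of the polyhedra $P_\pm$, and those edges project to the three \emph{intercusp} geodesics $a,b,c$ of the thrice-punctured sphere $D$, not to the intracusp geodesic $z$. (Indeed, $\tilde z$ is a midpoint ray lying in the interior of a shaded triangle, not on any white face.) The paper makes this explicit: $W$ has two components, one component $U$ meeting $D$ in the geodesic $c$ opposite $C$, and the other component $V$ meeting $D$ in $\{a,b\}$.

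Second, and more seriously, your cusp-neighborhood argument can only see the component $V$. The planes bounding $R$ on its white sides are exactly the lifts of $V$ near $C$; the component $U$ does not pass through $C$ at all, so no amount of analysis at infinity will show $\rho(U)=U$. The obstacle you flag---why $\rho$ must preserve the specific pair of white-side planes rather than some other meridional translates---is real, but even resolving it leaves $U$ unaddressed.

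The paper's fix is to work entirely on $D$ rather than near $C$. Since $\rho$ fixes $D$ and fixes its cusp $C$, it permutes the remaining cusps $\{A,B\}$, hence preserves the geodesic $c$ and the set $\{a,b\}$. Then $\rho(U)$ and $U$ are both embedded totally geodesic surfaces orthogonal to $D$ along the same geodesic $c$, so they lift to the same plane and coincide; likewise for $V$ via $\{a,b\}$. This sidesteps both the component issue and your self-identified obstacle in one stroke.
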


\begin{proof}
Again let $\rho \in Sym(M'_n)$, and we wish to show $\rho(W) = W$.  As in Figure \ref{fig:FixedD}$(a)$, label the cusps of $D$ by $A, B,$ and $C$ where $C$ corresponds to the crossing circle cusp, and label the intercusp geodesics by $a, b,$ and $c$.  The reflection surface $W$ has two components, both of which intersect $D$. One of the components, say $U$, of $W$ has just the knot circle as boundary, and can be thought of as obtained by attaching the inner and outer disks of the projection plane by one untwisted band through $C$ and $n-1$ half-twisted bands through the other crossing circles.  Thus $U$ intersects $D$ in the geodesic $c$ opposite $C$.  The other component of $W$, denoted $V$, is punctured by $C$ along two meridians, and intersects $D$ in geodesics $a$ and $b$.

Let $U'$ and $V'$ denote the images of $U$ and $V$ under $\rho$, respectively.
The isometry $\rho$ preserves the cusp $C$ by Lemma \ref{lem:symMaptoSelf}, and the crossing disk $D$ by Lemma \ref{lem:FixedD}.  Thus it either fixes or swaps cusps $A, B$, and acts analogously on intercusp geodesics opposite the cusps.  More precisely, $\rho$ preserves the geodesic $c$, and the set $\{a,b\}$.  This implies that $U'\cap D = c = U\cap D$ and $V' \cap D = \{a,b\} = V\cap D$.  Moreover, $\rho$ preserves angles so all the surfaces $U, U', V, V'$ are orthogonal to $D$. Thus both $U$ and $U'$ are connected, embedded, totally geodesic surfaces that intersect $D$ orthogonally in the geodesic $c$ (and similarly for $V, V'$ using geodesics $\{a,b\}$).  Therefore both lift to the plane in $P_+$ containing $\tilde{c}$ and orthogonal to $\widetilde{D}$.  Projecting back to $M'_n$ shows that in fact $U=U'$ (similarly $V=V'$).  Since $W = U \cup V$, the proof is complete. \end{proof}

Lemmas \ref{lem:FixedD} and \ref{lem:FixedW} allow us to determine the images of a longitude-meridian pair $(\lambda,\mu)$ under any symmetry of $M'_n$.  The rectangle $R$ in $P_+\cup P_-$ projects to a torus boundary $\partial C$ of the cusp $C$.  The crossing disk $D$ intersects $\partial C$ in a longitude $\lambda$, and the reflection surface $W$ intersects $\partial C$ in two meridians we denote $\mu_1, \mu_2$. Choosing $\mu=\mu_1$ and $\lambda$ as our generators for $\pi_1 \partial(C)$ we can now explicitly determine their possible images under $Sym(M'_n)$.

\begin{cor} \label{cor:restrictedsyms}
If $\rho\in Sym(M'_n)$, then $\rho(\lambda) = \pm\lambda$ and $\rho(\mu) = \pm\mu_i$, for some $i\in\{1,2\}$.
\end{cor}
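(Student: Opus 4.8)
The plan is to derive the corollary directly from the three preceding lemmas together with the fact that the curves $\lambda$ and $\mu_1,\mu_2$ on $\partial C$ are \emph{geometrically} cut out: $\lambda$ is the curve $D\cap\partial C$, and $\mu_1\cup\mu_2$ is exactly $W\cap\partial C$. Once a symmetry is known to preserve $C$, $D$ and $W$ set-wise, it must permute these intersection curves, and the statement follows at once.

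Concretely, first I would fix a $Sym(M'_n)$-invariant horoball expansion of the cusps (which exists since $Sym(M'_n)$ is finite, e.g.\ the maximal embedded one with equal expansion rates), so that every $\rho\in Sym(M'_n)$ permutes the cusp tori; since $\rho$ fixes the cusp $C$ by Lemma~\ref{lem:symMaptoSelf}, it restricts to a homeomorphism of $\partial C$. Next I would record the two intersection facts assembled in the discussion above: the crossing disk $D$ bounded by $C$ meets $\partial C$ in the single simple closed curve $\lambda$ (a longitude), while the component $V$ of the reflection surface $W$ is punctured by $C$ along the two parallel meridians $\mu_1,\mu_2$, and the other component $U$ of $W$ meets $D$ only in the intercusp geodesic $c$ opposite $C$, hence is disjoint from $\partial C$; so $W\cap\partial C=\mu_1\cup\mu_2$. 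Now applying $\rho$ and using $\rho(D)=D$ (Lemma~\ref{lem:FixedD}), $\rho(W)=W$ (Lemma~\ref{lem:FixedW}) and $\rho(\partial C)=\partial C$, one gets $\rho(\lambda)=\rho(D\cap\partial C)=D\cap\partial C=\lambda$ and $\rho(\mu_1\cup\mu_2)=\mu_1\cup\mu_2$ as unoriented $1$-manifolds in $\partial C$. Since $\rho|_{\partial C}$ is a homeomorphism it either fixes or interchanges the two components $\mu_1,\mu_2$, so, allowing for an orientation reversal (as $\rho$ may be orientation-reversing), $\rho(\lambda)=\pm\lambda$ and $\rho(\mu)=\rho(\mu_1)=\pm\mu_i$ for some $i\in\{1,2\}$.

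The only genuine point to pin down — and the main, if modest, obstacle — is the bookkeeping for the intersection patterns: that $\rho$ honestly restricts to $\partial C$ (handled by the equivariant cusp choice) and that $D$ and $W$ meet $\partial C$ in exactly the claimed curves with no spurious extra components, which is precisely why the earlier discussion distinguishes the component $V$ (punctured by $C$) from $U$ (disjoint from $\partial C$). Note that Lemma~\ref{lem:meridianslongitudes} is not logically required for this argument but serves as a consistency check that $\lambda$ and the $\mu_i$ indeed represent the longitude and meridian slopes on $\partial C$.
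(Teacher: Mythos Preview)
Your proposal is correct and follows essentially the same approach as the paper: both arguments use that $\rho$ preserves $C$, $D$, and $W$ (Lemmas~\ref{lem:symMaptoSelf}, \ref{lem:FixedD}, \ref{lem:FixedW}) to conclude that $\rho$ preserves the intersection curves $\lambda=D\cap\partial C$ and $\{\mu_1,\mu_2\}=W\cap\partial C$. Your write-up is somewhat more careful about the bookkeeping (the equivariant cusp expansion and distinguishing the component $U$ disjoint from $\partial C$), but the underlying idea is identical to the paper's two-line proof.
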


\begin{proof}
Since $D$ is fixed by $\rho$ (Lemma \ref{lem:FixedD}), we have $D\cap N(C) = \lambda$ is fixed as well, or that $\lambda = \pm\lambda$.  Lemma \ref{lem:FixedW} shows that $W$ is fixed by $\rho$, so $\rho(\mu)\in \rho(W)\cap N(C)=W\cap N(C)$.  This implies $\rho(\mu)\in\{\pm\mu_1,\pm\mu_2\}$.
\end{proof}

At this point, we are finally able to prove Theorem \ref{thm:smallsymgroup}.

\begin{proof}[Proof of Theorem \ref{thm:smallsymgroup}] 
Fix a cusp expansion for $C$. Since any symmetry of $M_{n}'$ maps $C$ to $C$, we can consider the homomorphism $f: Sym(M_{n}') \rightarrow Sym(C)$ given by restriction. We claim that this homomorphism is injective. Let  $\rho \in Sym(M_{n}')$, and suppose $\rho$ restricted to $C$ is the identity. So, $\rho$ will fix any given point in the interior of $C$ along with a tangent frame at that point. Then Proposition A.2.1 in Benedetti--Petronio \cite{BePe1992} implies that $\rho$ must be the identity map. Thus, the kernel of $f$ is trivial, making this homomorphism injective. 

Now, Corollary \ref{cor:restrictedsyms} implies that $|Sym(C)|  \leq 8$ since there are only $8$ possible combinations for where $\lambda$ and $\mu$ could map to under a symmetry, and these symmetries of $C$ are completely determined by how they act on $\lambda$ and $\mu$. At the same time, we know that $|Sym(\mathbb{S}^{3}, \mathcal{P}_{n}')| \geq 8$, since we have already identified a set of symmetries of $(\mathbb{S}^{3}, \mathcal{P}_{n}')$ that generates a group of order $8$. Since $f$ is injective, we now have that $8 \leq |Sym(\mathbb{S}^{3}, \mathcal{P}_{n}')|  \leq |Sym(M_{n}')| \leq |Sym(C)| \leq 8$, giving the desired result. 
\end{proof}

Finally, we can prove the main result of this section. Recall that a hidden symmetry of a hyperbolic $3$-manifold $M = \mathbb{H}^{3} / \Gamma$ is an element of $C(\Gamma) / N(\Gamma)$. 

\begin{thm}
	\label{thm:lotsofHS}
	For $n \geq 5$, $M_{n}'$ is a non-arithmetic hyperbolic $3$-manifold with $2n$ hidden symmetries. 
\end{thm}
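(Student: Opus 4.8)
The plan is to combine three facts that have been established (or are accessible): the minimal orbifold $\mathcal{O}_n^+$ in the commensurability class is known, $M_n'$ is commensurable with $M_n$ (Proposition \ref{prop:RefOrbComm}), and $\mathrm{Sym}(M_n')$ has just been computed in Theorem \ref{thm:smallsymgroup}. First I would record non-arithmeticity: by Proposition \ref{prop:NonarithMn}, $M_n$ is non-arithmetic for $n\ge 7$, and for $n=5,6$ one checks directly (via the invariant trace field computation in the proof of Theorem \ref{mainthm1}, or via the short-geodesic length $\ell(\gamma)$ of Proposition \ref{prop:NonarithMn} not matching the Neumann--Reid table) that $M_n$ is non-arithmetic; since $M_n'\in HTP(\mathcal{P}_n)$ is commensurable with $M_n$ and arithmeticity is a commensurability invariant, $M_n'$ is non-arithmetic as well.

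Next I would set up the counting. Write $M_n' = \mathbb{H}^3/\Gamma_n'$. Because $M_n'$ is non-arithmetic, Margulis gives a unique minimal orientation-preserving orbifold $\mathcal{O}_n^+ = \mathbb{H}^3/C^+(\Gamma_n')$ in its commensurability class, and by Proposition \ref{prop:RefOrbComm} this is the same commensurability class as $M_n$, so $\mathcal{O}_n^+ = \mathcal{O}_n^+(M_n)$, which by Theorem \ref{thm:noHS} equals $M_n/G_n^+$ with $|G_n^+| = 8n$; hence $\mathrm{vol}(\mathcal{O}_n^+) = \mathrm{vol}(M_n)/(8n) = f(n)$. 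Since $M_n$ and $M_n'$ are half-twist partners, they have the same volume (they are obtained by re-gluing the same two polyhedra $P_\pm$), so $\mathrm{vol}(M_n') = \mathrm{vol}(M_n) = 8n\,f(n)$. The number of hidden symmetries of $M_n'$ is $[C^+(\Gamma_n'):N^+(\Gamma_n')]$. Now
\[
[C^+(\Gamma_n'):\Gamma_n'] = \frac{\mathrm{vol}(M_n')}{\mathrm{vol}(\mathcal{O}_n^+)} = \frac{8n\,f(n)}{f(n)} = 8n,
\]
while $[N^+(\Gamma_n'):\Gamma_n'] = |\mathrm{Sym}^+(M_n')|$. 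By Theorem \ref{thm:smallsymgroup}, $\mathrm{Sym}(M_n')\cong (\mathbb{Z}/2)^3$ has order $8$, and exactly half of these symmetries are orientation-preserving — the two rotations generate an orientation-preserving subgroup of order $4$, while the reflection in the vertical plane is orientation-reversing — so $|\mathrm{Sym}^+(M_n')| = 4$. Therefore the number of (orientation-preserving) hidden symmetries is
\[
[C^+(\Gamma_n'):N^+(\Gamma_n')] = \frac{[C^+(\Gamma_n'):\Gamma_n']}{[N^+(\Gamma_n'):\Gamma_n']} = \frac{8n}{4} = 2n.
\]

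The main obstacle is making airtight the step $\mathrm{vol}(M_n') = \mathrm{vol}(M_n)$ together with the identification $\mathcal{O}_n^+(M_n') = \mathcal{O}_n^+(M_n)$: one must invoke that half-twist partners are built from the same pair of totally geodesic polyhedra $P_\pm$ (Section \ref{subsec:GP}), so they are scissors-congruent and hence equivolume, and that Proposition \ref{prop:RefOrbComm} places $M_n'$ in $M_n$'s commensurability class so that Margulis's uniqueness forces the same minimal orbifold. A secondary point needing care is the orientation bookkeeping: one should confirm that the order-$8$ group in Theorem \ref{thm:smallsymgroup} contains an index-two orientation-preserving subgroup (equivalently, that at least one listed generator reverses orientation — the planar reflection does), so that $|\mathrm{Sym}^+(M_n')| = 4$ rather than $8$. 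Once these are in place, the arithmetic above gives exactly $2n$ hidden symmetries, and the final sentence of Theorem \ref{mainthm3} — linear growth in volume — follows since $\mathrm{vol}(M_n') = 8n f(n)$ with $f(n)$ bounded between $f(3)$ and $v_{oct}/4$ by Lemma \ref{lemma:volprops}, so the ratio (number of hidden symmetries)$/$volume $= 2n/(8nf(n)) = 1/(4f(n))$ is bounded away from $0$ and $\infty$.
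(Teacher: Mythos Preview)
Your argument is correct and follows essentially the same route as the paper: use Proposition~\ref{prop:RefOrbComm} to place $M_n'$ in $M_n$'s commensurability class, invoke non-arithmeticity (Theorem~\ref{mainthm1}) and Margulis to get a unique minimal orbifold, use equal volume of half-twist partners to compute the index of $\Gamma_n'$ in the commensurator, and divide by $|\mathrm{Sym}(M_n')|$ from Theorem~\ref{thm:smallsymgroup}. The only difference is bookkeeping: the paper works with the full commensurator and normalizer (obtaining $[C(\Gamma_n'):\Gamma_n']=16n$ and $[N(\Gamma_n'):\Gamma_n']=8$, hence $16n/8=2n$), whereas you work with the orientation-preserving versions ($8n/4=2n$); since both $C(\Gamma_n')$ and $N(\Gamma_n')$ contain orientation-reversing elements here, the two indices $[C:N]$ and $[C^+:N^+]$ agree, so both computations yield the paper's count of hidden symmetries.
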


\begin{proof}
For $n \geq 5$, $M_{n} = \mathbb{H}^{3} / \Gamma_{n}$ and $M_{n}' = \mathbb{H}^{3} / \Gamma_{n}'$ are in the same commensurability class (Proposition \ref{prop:RefOrbComm}) and non-arithmetic (Theorem \ref{mainthm1}). Thus, they cover a common minimal orbifold $\mathcal{O}_{n} = M_{n} /C(\Gamma_{n})$. In Section \ref{sec:SymandHS}, we showed that $M_{n}$ has no hidden symmetries and its (full) symmetry group has order $16n$. This implies that $[C(\Gamma_{n}): \Gamma_{n}] = 16n$. Thus, the cover $M_{n} \rightarrow \mathcal{O}_{n}$ is degree $16n$, and since $M_{n}$ and $M_{n}'$ have the same volume, we also have that the cover $M_{n}' \rightarrow \mathcal{O}_{n}$ is degree $16n$. Now, Theorem \ref{thm:smallsymgroup} implies that $[N(\Gamma_{n}') : \Gamma_{n}'] = |Sym(M_{n}')| = 8$ and since $16n = [N(\Gamma_{n}') : \Gamma_{n}'] [C(\Gamma_{n}'): N(\Gamma_{n}')]$, we have that $[C(\Gamma_{n}'): N(\Gamma_{n}')] = 2n$. This implies that $M_{n}'$ has $2n$ hidden symmetries, as needed. 
\end{proof}

\begin{cor}
\label{cor:countingHS}
The number of hidden symmetries of $M_{n}'$ grows linearly with volume.
\end{cor}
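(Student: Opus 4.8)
The plan is to combine the exact count of hidden symmetries from Theorem~\ref{thm:lotsofHS} with the volume formula for $M_n'$ and show the ratio of hidden symmetries to volume is bounded below by a positive constant (and in fact converges to a positive limit). Since $M_n'$ and $M_n$ are half-twist partners, they are commensurable and in particular have the same volume, so $\vol(M_n') = \vol(M_n) = 8n\left(\mathcal{L}(\tfrac{\pi}{4}+\tfrac{\pi}{2n}) + \mathcal{L}(\tfrac{\pi}{4} - \tfrac{\pi}{2n})\right)$, which in the notation of Section~\ref{sec:SymandHS} is $8n \cdot f(n)$.

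First I would recall from Theorem~\ref{thm:lotsofHS} that the number of hidden symmetries of $M_n'$ is exactly $2n$ for $n \ge 5$. Then the quotient
\[
\frac{\#\{\text{hidden symmetries of } M_n'\}}{\vol(M_n')} = \frac{2n}{8n f(n)} = \frac{1}{4f(n)}.
\]
By Lemma~\ref{lemma:volprops}, $f(n)$ is strictly increasing for $n > 2$ and converges to $v_{oct}/4$, so in particular $f(n)$ is bounded above by $v_{oct}/4$ on the range $n \ge 5$ (indeed for all $n > 2$). Hence
\[
\frac{\#\{\text{hidden symmetries of } M_n'\}}{\vol(M_n')} = \frac{1}{4f(n)} \ge \frac{1}{4 \cdot (v_{oct}/4)} = \frac{1}{v_{oct}} > 0
\]
for all $n \ge 5$. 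Thus the number of hidden symmetries grows at least linearly in volume; combined with the trivial upper bound (the number of hidden symmetries is at most linear in volume, as noted in the remark following this corollary), we conclude it grows exactly linearly.

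There is no real obstacle here: the work has all been done in establishing the exact count $2n$ (Theorem~\ref{thm:lotsofHS}) and the volume asymptotics (Lemma~\ref{lemma:volprops}). The only thing to be slightly careful about is packaging the statement ``grows linearly with volume'' correctly --- I would phrase it as: there exist positive constants $c_1, c_2$ such that $c_1 \vol(M_n') \le \#\{\text{hidden symmetries of } M_n'\} \le c_2 \vol(M_n')$ for all $n \ge 5$, where one may take $c_1 = 1/v_{oct}$ (using $f(n) < v_{oct}/4$) and $c_2 = 1/(4f(5))$ (using $f(n) \ge f(5)$ by monotonicity). Since $\vol(M_n') \to \infty$ as $n \to \infty$, this family genuinely realizes the maximal linear growth rate for hidden symmetries among non-arithmetic hyperbolic $3$-manifolds.
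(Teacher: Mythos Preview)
Your proposal is correct and takes essentially the same approach as the paper: both combine the exact count $HS_n = 2n$ from Theorem~\ref{thm:lotsofHS} with the volume formula $vol(M_n') = 8n f(n)$ and the boundedness of $f(n)$ to sandwich $HS_n$ between two positive multiples of the volume. The only cosmetic difference is that you invoke Lemma~\ref{lemma:volprops} directly for the bounds on $f(n)$, whereas the paper re-derives the asymptotic via continuity of the Lobachevsky function; your packaging with explicit constants $c_1 = 1/v_{oct}$ and $c_2 = 1/(4f(5))$ is slightly cleaner.
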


\begin{proof}
Recall that $vol(M_{n}') = vol(M_{n}) =8n(\mathcal{L}(\frac{\pi}{4}+\frac{\pi}{2n}) + \mathcal{L}(\frac{\pi}{4} - \frac{\pi}{2n}))$, where $\mathcal{L}(\theta)$ denotes the Lobachevsky function. Since the Lobachevsky function is continuous, we can choose an arbitrarily small $\epsilon >0$, so that for all $n$ sufficiently large, we have
\begin{center}
$16n(\mathcal{L}(\frac{\pi}{4}) - \epsilon) \leq vol(M_{n}') \leq 16n(\mathcal{L}(\frac{\pi}{4}) + \epsilon)$.
\end{center}
 Let $HS_{n}$ denote the number of hidden symmetries for $M_{n}'$. Since Theorem \ref{thm:lotsofHS} tells us that $M_{n}'$ has $2n$ hidden symmetries, the above inequality implies that
 \begin{equation*}
  \frac{vol(M_{n}')}{8(\mathcal{L}(\pi/4) + \epsilon)} \leq  HS_{n}  \leq \frac{vol(M_{n}')}{8(\mathcal{L}(\pi/4) - \epsilon)}
 \end{equation*}   
 
which gives the desired bound. 
\end{proof}

\textbf{Remark 1:} Corollary \ref{cor:countingHS} actually highlights the fastest growth rate for the number of hidden symmetries that any sequence of non-arithmetic hyperbolic $3$-manifold can have relative to volume. To justify this, we give an upper bound. Given a non-arithmetic hyperbolic $3$-manifold $M = \mathbb{H}^{3} / \Gamma$, the number of hidden symmetries of $M$ is at most $[C(\Gamma): \Gamma]$, with equality exactly when $M$ admits no symmetries. At the same time, $[C(\Gamma): \Gamma]$ also gives the degree of the cover $M \rightarrow \mathcal{O}$, where $\mathcal{O}$ is the minimal orbifold in $M$'s commensurability class. Since $vol(\mathcal{O})$ is uniformly bounded below by the volume of the minimal volume hyperbolic $3$-orbifold (say this has volume $v_{0}$), we have that any non-arithmetic $M$ has at most $vol(M) / v_{0}$ hidden symmetries. Thus, given any sequence of non-arithmetic hyperbolic $3$-manifolds, the number of hidden symmetries can grow at most linearly with volume. 

\textbf{Remark 2:} In \cite[Section 7]{M2017}, Millichap shows that if you perform sufficiently long  Dehn fillings on all of the crossing circles of $\mathcal{P}_{n}'$ (with $n$ odd), then the resulting knot complement admits no hidden symmetries. It is interesting to see that even though $\mathcal{M}_{n}'$ admits many hidden symmetries, performing long Dehn fillings will always break these hidden symmetries in these cases. In light of this fact and Theorem 1.2 from \cite{DM2018}, it seems plausible that highly twisted knot complements (that come from performing sufficiently long Dehn fillings along crossing circles of a FAL with a single knot component) will admit no hidden symmetries.


\bibliographystyle{plain}

\end{document}